\newtheorem{theorem}{Theorem}
\newtheorem{lemma}[theorem]{Lemma}
\newtheorem{remark}{Remark}
\numberwithin{equation}{section}
\numberwithin{theorem}{section}
\numberwithin{subsection}{section}
\newcommand{\R}{\mathbb{R}}
\newcommand{\N}{\mathbb{N}}
\newcommand{\pa}{\partial}
\newcommand{\LOM}[1]{L^{#1}(M)} 
\newcommand{\LQM}[1]{L^{#1}(M_T)} 
\newcommand{\LO}[1]{L^{#1}(\Omega)}
\newcommand{\LM}[1]{L^{#1}(M)}
\newcommand{\LQ}[1]{L^{#1}(Q_T)}
\newcommand{\sumi}{\sum_{i=1}^{m_1}}
\newcommand{\sumj}{\sum_{j=1}^{m_2}}
\newcommand{\intO}{\int_{\Omega}} 
\newcommand{\intOM}{\int_{M}}
\newcommand{\intQ}{\int^T_0\int_{\Omega}}
\newcommand{\intQM}{\int^T_0\int_{M}}
\newcommand{\eps}{\varepsilon}
\newcommand{\M}{M} 
\newcommand{\sumii}{\sum_{i=1}^{m_1}}
\newcommand{\sumjj}{\sum_{j=1}^{m_2}}
\newcommand{\norm}[1]{\left\|#1\right\|}
\def\({\left(}
\def\){\right)}
\newcommand{\rM}{r_{M}}
\newcommand{\rO}{r_{\Omega}}
\newcommand{\mM}{\mu_{M}}
\newcommand{\intQtautwo}{\int^{\tau+2}_{\tau}\int_{\Omega}}
\newcommand{\intMtautwo}{\int^{\tau+2}_{\tau}\int_{M}}
\newcommand{\vat}{\varphi_\tau}
\renewcommand{\H}{\mathscr{H}}
\newcommand{\LS}[1]{L^{#1}(M_T)}
\newcommand{\LStau}[1]{L^{#1}(M_{\tau,\tau+1})}
\newcommand{\LStaut}[1]{L^{#1}(M_{\tau,\tau+2})}
\newcommand{\LQtau}[1]{L^{#1}(Q_{\tau,\tau+1})}
\newcommand{\LQtaut}[1]{L^{#1}(Q_{\tau,\tau+2})}
\title[Volume-surface systems]{Volume-surface systems with sub-quadratic intermediate sum on the surface: Global existence and boundedness}
\author[J. Yang]{Juan Yang}
\address{Juan Yang \hfill\break
School of Mathematics and Statistics, 
Lanzhou University, Lanzhou, 730000, PR China\hfill\break
Department of Mathematics and Scientific Computing, University of Graz, \\  Heinrichstrasse 36, 8010 Graz, Austria}
\email{yangjuan18@lzu.edu.cn, yangjuan0912@gmail.com}
\author[B.Q. Tang]{Bao Quoc Tang}
\address{Bao Quoc Tang \hfill\break
Institute of Mathematics and Scientific Computing, University of Graz, 
Heinrichstrasse 36, 8010 Graz, Austria}
\email[Corresponding author]{quoc.tang@uni-graz.at, baotangquoc@gmail.com}
\begin{document}
\begin{abstract}
	The global existence and boundedness of solutions to volume-surface reaction diffusion systems with a mass control condition are investigated. Such systems arise typically in e.g. cell biology, ecology or fluid mechanics, when some concentrations or densities are inside a domain and some others are on its boundary. Comparing to previous works, the difficulty of systems under consideration here is that the nonlinearities on the surface can have a sub-quadratic growth rates in all dimensions. To overcome this, we first use the Moser iteration to get some uniform bounds of the time integration of the solutions. Then by combining these bounds with an $L^p$-energy method and a duality argument, we obtain the global existence of solutions. Moreover, under mass dissipation conditions, the solution is shown to be bounded uniformly in time. 
\end{abstract}

\maketitle
\noindent{\small{\textbf{Classification AMS 2010:} 35A01, 35K57, 35K58, 35Q92.}}

\noindent{\small{\textbf{Keywords:} Volume-surface reaction-diffusion systems; Global existence; Moser iteration; Duality method; Intermediate sum condition;}}

%\tableofcontents

\section{Introduction and main results} 

Reaction-diffusion systems play a crucial role in modeling various phenomena in biological, chemical, and physical processes, where interactions between reactive substances and their diffusion in space are of particular interest. A special class of such systems called volume-surface (or bulk-surface) systems, arises naturally in contexts where concentrations or densities occur not only within a domain but also on its boundary. In recent years, important progress has been made in studying reaction-diffusion systems, and the well-posedness of volume-surface systems is also an active area of research. These systems are more complex because of the coupling between volume and surface dynamics, so we should carefully deal with the boundary conditions and how diffusion and reaction terms work together. In this paper, the global existence of mass controlled volume-surface systems is investigated, where the nonlinearities on the surface can have sub-quadratic growth rates in all dimensions.

\subsection{Problem setting and state of the art} We start with some description of the system under consideration and review existing literature concerning its global well-posedness. Let $m_1, m_2\in \mathbb N_+$. We consider the following volume-surface reaction-diffusion system for $u = (u_1, \ldots, u_{m_1})$, and $v = (v_1, \ldots, v_{m_2})$,
\begin{equation}\label{System}
	\left\{
	\begin{aligned}
		\pa_t u_{i} &=d_{i}\Delta u_{i}+F_{i}(u), &&\text{ on }  \Omega_T:=\Omega\times(0,T), i=1,\ldots, m_1,\\
		d_{i}\pa_{\nu} u_{i} &=G_{i}(u,v), &&\text{ on } \M_T:= \M\times(0,T), i=1,\ldots, m_1,\\
		\pa_t v_{j} &=\delta_{j}\Delta_{\M} v_{j}+H_{j}(u,v), && \text{ on }  \M_T:= \M\times(0,T),j=1,\ldots, m_2,\\
		u_i(0)&=u_{i,0} && \text{ on  } \overline{\Omega}\times{\{0\}}, i=1,\ldots, m_1,\\
		v_j(0)&=v_{j,0} && \text{ on  } \M\times{\{0\}}, j=1,\ldots, m_2, 
	\end{aligned}\right. 
\end{equation} 
where  $\Omega\subset\R^n$, $n\geq 2$, be a bounded domain with smooth (e.g. $C^{2+\delta}$ for some $\delta>0$) boundary $\M:= \partial\Omega$,   $\pa_{\nu}$ denotes the outward normal flux on the boundary $\M$, the diffusion coefficients $d_i,~\delta_j$ are positive for all  $i=1,\cdots, m_1$, $j=1,\cdots, m_2$. The nonlinearities are assumed to satisfy the following assumptions.
\begin{enumerate}[label=(A\theenumi),ref=A\theenumi]
	\item\label{A1} (Local Lipschitz continuity) $F_i: \R_{+}^{m_1} \to \R$, $G_i: \R_+^{m_1+m_2}\to \R$, $H_j: \R_+^{m_1+m_2}\to \R$ are locally Lipschitz continuous for $i\in \{1,\cdots , m_1\}$ and $j\in \{1, \cdots , m_2\}$;
	\item\label{A2} (Quasi-positivity) 
	\begin{align*}
		F_{i}(u) \ge0 &\text{ for all }u\in \R_{+}^{m_1} \text{ with }u_{i}=0, \; \forall i=1,\cdots , m_1,\\
		G_{i}(u,v)\ge0  &\text{ for all }u\in \R_{+}^{m_1}, v\in \R_{+}^{m_2}\text{ with }u_{i}=0, \; \forall i=1,\cdots , m_1, \\
		H_j(u,v) \geq 0  &\text{ for all } u\in \R_+^{m_1}, v\in \R_+^{m_2} \text{ with } v_{j} = 0, \; \forall j=1,\cdots , m_2;
	\end{align*}
	The ubiquitous assumption \eqref{A1} ensures the local existence of a solution, while \eqref{A2} presents some realistic physical property: if some concentration is zero, it cannot be consumed in a reaction. An important consequence of \eqref{A2} is that the solution will preserve the non-negativity of initial data.
	\item \label{A3} (Mass control) There exist $L\in\R$, $K\geq 0$, positive constants $\alpha_1, \ldots, \alpha_{m_1}, \beta_1, \ldots, \beta_{m_2}$ such that
	\begin{equation}%\label{eq:balsysmain}
		\begin{aligned}
			\sumii   \alpha_i F_i(u)&\le L\left(|u| + 1 \right)  ,\\ 
			\sumii   \alpha_i G_i(u,v) &\le L\left(|u| + |v| + 1 \right),\\
			\sumjj   \beta_j H_j(u,v)&\le L\left(|u| + |v| + 1 \right) 
		\end{aligned}
	\end{equation}
	where $|u| = \sum_{i=1}^{m_1}u_i$ and $|v| = \sum_{j=1}^{m_2}v_j$,
	for all $u\in \R_+^{m_1}$ and $v\in \R_+^{m_2}$. Assumption \eqref{A3} appears frequently in applications, as it shows certain ``balancing'' property, in the sense that, if some nonlinearities appear as a source (i.e. with the plus sign) in some equation, it should be a product in another equation (i.e. with the minus sign). A mathematical consequence of this assumption is that the total mass, i.e. $L^1$-norm, is controlled for all time. Without loss of generality, \textit{we can assume for the rest of this paper} that $\alpha_i = 1$ and $\beta_j = 1$ for all $i=1,\ldots, m_1$ and all $j=1,\ldots, m_2$.
	
	\item\label{A4} (Intermediate sum condition)  There exist a lower triangular matrix $A\in \R^{(m_1+m_2)\times(m_1+m_2)}$ with non-negative elements and positive diagonal elements, and constants  $K_1\geq 0$, $\rO>0$, $\rM>0$, $\mu_{\M}>0$, 
	such that the following component-wise inequalities are fulfilled 
	\begin{align*} 
		\begin{aligned}
			A
			\begin{bmatrix}
				F(u)\\ \vec{0}_{m_2}
			\end{bmatrix}
			&\le K_1 \left(\sumii u_i^{\rO}+1 \right)\vec{1}_{m_1+m_2},\\
			A
			\begin{bmatrix}
				G(u,v)\\H(u,v)
			\end{bmatrix}
			&\le K_1 \begin{bmatrix}
				\left(|u|^{r_M} + |v|^{r_M}+1\right)\vec{1}_{m_1}\\
				\left(|u|^{\mu_M} + |v|^{\mu_M}+1\right)\vec{1}_{m_2}
			\end{bmatrix}
		\end{aligned}
	\end{align*}
	for all $u\in \R_{+}^{m_1}$ and $v\in \R_{+}^{m_2}$,
	where  $\vec{1}_{m}$ (res. $\vec{0}_{m}$) is the column vector of all $1s$ elements (res. $0s$ elements) in $\R^m$, and the exponents $\rO$, $\rM$ and $\mu_M$ satisfy some bounds (see Theorem \ref{Theorem:1}). This is the only technical assumption of the nonlinearities, though they still appear frequently in applications (see e.g. \cite{morgan2023global,augner2024analysis} for more details). It is noted that only the first nonlinearities, i.e. $F_1(u)$ and rsp. $G_1(u,v)$ need to be bounded above by polynomials of order $\rO$ and rsp. $r_M$, while all other nonlinearities just need to satisfy some good ``cancellation'' through linear combinations by elements of the matrix $A$.
	
	\item\label{A5} (Polynomial bound) There are constants $K_2, r> 0$ such that, for all $i=1,\cdots , m_1$, $j=1,\cdots , m_2$,
	\begin{equation*} 
		F_i(u)\leq K_2\left(|u|^{r}+ 1\right), \quad G_i(u,v),~H_j(u,v)\leq K_2\left(|u|^{r}+|v|^{r}+1\right) 
	\end{equation*}
	for all $u\in \R_{+}^{m_1}$ and $v\in \R_{+}^{m_2}$. It is remarked that \textit{there is no restriction on $r$}, that means the nonlinearities generally can have arbitrary polynomial growth rates provided \eqref{A4} is fulfilled.
\end{enumerate}

\medskip
Volume-surface systems of type \eqref{System} have recently attracted a lot of attention, not only due to  their numerous applications in biology \cite{alphonse2018coupled,fellner2016quasi,borgqvist2021cell,garcia2014mathematical}, ecology \cite{berestycki2013influence,berestycki2015effect}, heterogeneous catalysts \cite{gesse2024stability}, or fluid mechanics \cite{mielke2012thermomechanical} and many others, but also because of the highly non-trivial challenges in the analysis of such systems. Thanks to the local Lipschitz continuity of nonlinearities in \eqref{A1}, the unique local strong solution on a maximal interval $(0,T_{\max})$ is quite standard, and one has the blow-up criterion 
\begin{align}\label{Blow_up}
	T_{\max}<+\infty \quad \Longrightarrow\quad \limsup_{t\rightarrow T_{\max}} \Big(\sumii\|u_i(t)\|_{\LO{\infty}} + \sumjj\|v_j(t)\|_{\LOM{\infty}} \Big) = \infty.
\end{align}
Thanks to  the quasi-positivity condition \eqref{A2}, the solution of \eqref{System} is non-negative if the initial data is also non-negative. 
Moreover, by using \eqref{A3} and a duality argument, one can obtain the estimate
\begin{align}\label{0bound}
	\sumii \|u_i\|_{L^{\infty}(0,T;\LO{1})} +  \sumjj \|v_j\|_{L^{\infty}(0,T;\LOM{1})} + \sumii \|u_i\|_{\LQM{1}} \leq C_T.
\end{align}
Extending this local solutio globally, on the other hand, a significantly challenging issue. Indeed, comparing with \eqref{Blow_up}, the bounds \eqref{0bound} are far from sufficient to show the global existence of the system \eqref{System}. In fact, even in the case of classical reaction-diffusion systems, i.e. $m_2 = 0$, the authors of \cite{pierre2000blowup} already showed that the solution can possibly blow up in finite time under \textit{only} the natural assumptions \eqref{A1}, \eqref{A2} and \eqref{A3}. Therefore, one need to impose more conditions on the nonlinearities, see the extensive survey \cite{Pierre2010Global} for more details. The study of volume-surface systems is even more challenging, due to the volume-surface coupling, which makes some methods for classical reaction-diffusion systems fail to apply, see e.g. \cite{morgan2019martin}.
Let us review some existing results concerning the global existence of \eqref{System}.
When the system is linear or the reactions are restricted to at most linear growth, global solutions were proven in \cite{fellner2016quasi,hausberg2018well}. 
In the case of nonlinear coupling on the boundary, global bounded solutions for the reversible reaction $\alpha \mathcal U \leftrightharpoons \beta \mathcal V$ 
are established in \cite{fellner2018well}. 
In the works \cite{sharma2016global,ma2017uniform},  global existence and boundedness of solutions are shown for volume-surface reaction diffusion systems under the assumption of a linear upper bound for $F_i$, $G_i$, and the sum of $G_i + H_j$, i.e. for any $i\in \{1,\cdots , m_1\}$ and any $j\in \{1,\cdots , m_2\}$, 
\begin{equation}\label{linear_growth}
	F_i(u) \leq L (1+|u|) , \quad G_i(u,v) + H_j(u,v) \leq L( 1+|u|+|v|)
\end{equation}
for some $L>0$. If it is known a-priori that control of $L^\infty$-bounds is preserved, then one can get global classical solutions for general systems, see \cite{disser2020global}. It should also be mentioned that there are also many other works on volume-surface systems focusing on stability analysis, pattern formation or singular limit have also been carried out, see e.g. \cite{madzvamuse2015stability,alphonse2018coupled,paquin2019pattern,niethammer2020bulk,gomez2021pattern,ratz2014symmetry,backer2022analysis}. The global existence of systems in these works are usually not the scope and/or can be obtained using the aforementioned works on well-posedness.
Recently, in the work \cite{morgan2023global}, by using some $L^p$-energy functions and duality method, the global existence of solutions for the system \eqref{System} under the assumptions \eqref{A1}--\eqref{A5} with $1\leq\rO<1+ \frac{2}{n}$, $1\leq\rM<1+\frac{1}{n}$ and $1\leq \mu_{\M}\leq 1+ \frac{4}{n+1}$ was obtained. Moreover, if the mass control condition are replaced by the  mass dissipation condition, the solution isuniformly bounded in time.  Similar results are obtained in  \cite{augner2024analysis}, where it  assumed more restrictive conditions on nonlinearties $\rO < 1 + \frac 2n$, $\mu_M < 1 + \frac{2}{n+1}$, and $- K_1(1+|u|)\leq G_i(u,v)\leq K_2(1+|v|)$, but less restrictive initial data. In both of this work, $\mu_M \to 1$ as $n\to \infty$, i.e. the intermediate sum condition for the surface reactions become almost linear for large dimensions $n$. In this paper, we improve these existing results by allowing sum \textit{dimensionally independent bounds} on $\mu_M$, namely $1\leq \mu_{\M}< 2$ arbitrary, which is a clear improvement when $n\geq 4$. In order to obtain this improvement, we combine the techniques from \cite{morgan2023global}, some Moser iteration and a duality argument from \cite{morgan1989global}, which will be detailed in the next section.

%will show that there exists a unique global solution for \eqref{System} under under the assumption \eqref{A1}, \eqref{A2}, \eqref{A3}, \eqref{A5} and \eqref{A4} with $1\leq\rO<1+ \frac{2}{n}$, $1\leq\rM<1+\frac{1}{n}$ and $1\leq \mu_{\M}< 2$. Moreover, the uniformly bounded in time can be obtained if the nonlinearities satisfies the mass dissipation condition.  

\subsection{Main result and key ideas}
The main result of this paper is the following theorem.
\begin{theorem}\label{Theorem:1}
	Let $n\geq 4$. Assume \eqref{A1}, \eqref{A2}, \eqref{A3}, \eqref{A4} and \eqref{A5} with
	\begin{align}\label{Condition:Growth}
		1\leq \rO<1+\frac{2}{n}, \quad 1\leq r_{\M}<1+ \frac{1}{n} \quad\text{and} \quad 1\leq \mu_{\M}<2.
	\end{align} 
	Then, for any nonnegative, bounded initial data $(u_0, v_0)\in (W^{2,p}(\Omega))^{m_1} \times (W^{2,p}(\M))^{m_2}$ for some $p>n$ satisfying the compatibility condition
	\begin{align}\label{compatibility}
		d_i \nabla u_{i0} \cdot \nu = G_i(u_0, v_0) \quad \text{on } \M \quad \text{for all } i=1,\cdots, m_1,
	\end{align}
	the system \eqref{System} has a unique global classical solution. Moreover, if $L<0$ or $L=K=0$ in \eqref{A3}, then
	\begin{align*}
		\sup_{i=1,\cdots,m_1} \sup_{j=1,\cdots,m_2} \sup_{t\geq 0} \left( \|u_i(t)\|_{\LO{\infty}} + \|v_j(t)\|_{\LOM{\infty}} \right) < +\infty.
	\end{align*}
\end{theorem}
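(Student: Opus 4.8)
\medskip

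The plan is to establish, on each finite interval $[0,T]$, an a priori bound on $\sum_i\|u_i(t)\|_{\LO{\infty}}+\sum_j\|v_j(t)\|_{\LOM{\infty}}$; together with the local solvability from \eqref{A1}, the blow-up criterion \eqref{Blow_up}, the non-negativity from \eqref{A2} and the regularity gain from \eqref{A5}, this yields the global classical solution. For the uniform-in-time assertion one re-runs the \emph{same} estimates on the shifted cylinders $Q_{\tau,\tau+2}$ and $M_{\tau,\tau+2}$, with a temporal cut-off that removes the influence of the data at $t=\tau$; since \eqref{0bound} holds uniformly in $T$ precisely when $L<0$ or $L=K=0$, all constants become independent of $\tau$ and the stationary $L^\infty$ bound follows.

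\emph{Step 1 (duality).} Summing the $u_i$-equations (with $\alpha_i\equiv1$) and the $v_j$-equations and using \eqref{A3}, the total densities $U=\sum_i u_i$ and $V=\sum_j v_j$ satisfy a coupled volume-surface differential inequality, schematically $\partial_t U-\Delta\big(\sum_i d_i u_i\big)\le L(U+1)$ in $\Omega_T$, $\partial_\nu\big(\sum_i d_i u_i\big)\le L(U|_M+V+1)$ on $M_T$, and $\partial_t V-\Delta_M\big(\sum_j\delta_j v_j\big)\le L(U|_M+V+1)$ on $M_T$. A volume-surface duality argument in the spirit of Pierre's lemma, followed by the usual parabolic-regularity upgrade, then produces the space-time bounds $\|u\|_{L^2(\Omega_T)}+\|v\|_{L^2(M_T)}+\|u|_M\|_{L^2(M_T)}\le C_T$.

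\emph{Step 2 (bootstrap: Moser iteration and $L^p$-energy).} Since the individual nonlinearities may grow arbitrarily fast --- only \eqref{A5} is assumed for them --- one cannot upgrade the $L^2$ bounds to $\sup_t\|u(t)\|_\infty$ directly; the idea is to exploit the lower-triangular matrix $A$ of \eqref{A4}, which lets one treat the nonlinearities inductively, the leading ones $F_1,G_1$ being dominated by explicit polynomials of the small orders $r_\Omega<1+\tfrac2n$ and $r_M<1+\tfrac1n$ --- exactly the sub-criticality thresholds for the bulk heat operator with, respectively, an interior source in $L^2$ and a boundary flux in $L^2$ --- while the rest are absorbed through the cancellation built into the lower rows of $A$. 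The first phase of the bootstrap is a Moser-type iteration in the manner of \cite{morgan1989global}, which converts the $L^2$ bounds into \emph{time-integrated} estimates of the form $\int_0^T\big(\sum_i\|u_i(t)\|_{\LO{\infty}}+\sum_j\|v_j(t)\|_{\LOM{\infty}}\big)\,dt<\infty$ (and $\|u|_M\|_{L^p(M_T)}+\|v\|_{L^p(M_T)}<\infty$ for every finite $p$); the second phase is the $L^p$-energy method together with a further duality step, as in \cite{morgan2023global}, feeding on these time-integrated bounds. It is here that $\mu_M<2$ is used: sub-quadratic growth keeps the relevant exponents in the $L^p$-energy estimates subcritical (in particular $\mu_M-1<1$, so positive powers of the time-integrated bounds remain integrable in $t$), and --- this being the whole point of the improvement --- it does so with no restriction on the dimension $n$, unlike the condition $\mu_M<1+\tfrac{4}{n+1}$ of \cite{morgan2023global}.

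\emph{Step 3 and main obstacle.} Combining Step 2 with the Duhamel representations for the bulk (Neumann) and surface heat semigroups and estimating the nonlinear terms by \eqref{A5} --- the multiplying coefficients now being time-integrable --- a Gronwall argument yields $\sup_{[0,T]}\big(\|u_i(t)\|_{\LO{\infty}}+\|v_j(t)\|_{\LOM{\infty}}\big)<\infty$, hence global existence by \eqref{Blow_up}; the stationary bound then follows as explained above. I expect the crux to be Step 2: the Moser iteration has to be carried out \emph{simultaneously} for $u$ and $v$, because the bulk estimate depends on the surface trace through the flux $G_i$ while the surface estimate depends on $H_j$, so one must propagate exponents that stay compatible with the trace and parabolic Sobolev embeddings on $\Omega$ and on the $(n-1)$-dimensional manifold $M$ --- this is what pins down $r_\Omega<1+\tfrac2n$ and $r_M<1+\tfrac1n$. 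A secondary difficulty is promoting the merely time-integrated control of Step 2 to genuine $L^\infty(0,T)$ bounds, and making all the constants uniform in $\tau$ for the stationary statement.
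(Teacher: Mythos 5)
Your strategic outline --- low-order control, Moser iteration, $L^p$-energy exploiting the lower-triangular structure of $A$, then parabolic regularity, with cut-offs on shifted cylinders for uniformity in time --- matches the paper, but the key quantitative output of the Moser step is misidentified, and this causes your Step~3 to fail as described.

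You claim the iteration yields $\int_0^T\big(\sum_i\|u_i(t)\|_{\LO{\infty}}+\sum_j\|v_j(t)\|_{\LOM{\infty}}\big)\,dt<\infty$, i.e.\ an $L^1(0,T;L^\infty)$-bound on the solution itself. That is not what the iteration produces, nor is it attainable here: since the individual nonlinearities may grow arbitrarily under \eqref{A5}, one cannot iterate on $u$ or $v$ directly. The iteration is carried out on the time-primitives $\widetilde W(x,t)=\int_0^t\sum_i d_i u_i\,ds$ and $\widetilde Z(x,t)=\int_0^t\sum_j\delta_j v_j\,ds$, which by \eqref{A3} obey an autonomous parabolic system with bounded coefficients; the output is the weaker, oppositely-ordered bound
\[
\Big\|\int_0^t u_i\,ds\Big\|_{\LQ{\infty}}+\Big\|\int_0^t u_i\,ds\Big\|_{\LQM{\infty}}+\Big\|\int_0^t v_j\,ds\Big\|_{\LQM{\infty}}\le C_T,
\]
an $L^\infty_{x,t}$-bound on the time integral, not a time integral of $L^\infty_x$-norms. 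With only this, your Duhamel--Gronwall in Step~3 does not close: the nonlinearities grow polynomially, so $\int_0^T\|u^r\|_\infty\,dt$ is not controlled by $\int_0^T\|u\|_\infty\,dt$ when $r>1$, and in any case the latter is not available. The paper instead feeds the primitive bound into a further volume-surface duality step (in the spirit of Morgan's 1990 boundedness paper): testing the $v_j$-equation against a dual solution $\Psi_j$, expanding $H_j$ via the lower rows of $A$, and confronting integrals such as $\int_M\big(\int_0^T u_i^{\mu_M}\,ds\big)^q\,dx$. Splitting $u_i^{\mu_M}=u_i^{\mu_M-k}u_i^k$ with $k=p-\mu_M(p-1)$ and applying H\"older in $t$ pulls out $\big(\int_0^T u_i\,ds\big)^k$ --- this is exactly where the Moser bound is used --- leaving $\|u_i\|_{L^q(M_T)}^{1-k}$ to absorb by Young. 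The restriction $\mu_M<2$ is what makes $(1-k)q=p(\mu_M-1)<1$ for $p$ close to $1$, a dimension-free constraint. Your heuristic ``$\mu_M-1<1$ so positive powers remain integrable'' gestures at this, but the actual mechanism --- H\"older splitting inside a duality estimate, anchored on the $L^\infty_x$-bound of the time-primitive, followed by the $L^p$-energy/duality closure and then linear parabolic regularity rather than Gronwall --- is quite different, and without it the sub-quadratic threshold is not exploited.
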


\begin{remark}\hfill
	\begin{itemize} 
		\item In \cite{morgan2023global}, for $1\leq \mu_{\M}\leq 1+ \frac{4}{n+1}$ or in \cite{augner2024analysis} for $1\le \mu_M \le 1 + \frac{2}{n+1}$, the global existence of solutions has been proved. It is noted that
		$ 1+ \frac{4}{n+1} < 2$ for higher dimension.  Our results improve on the condition of $\mu_{\M}$ for $n\geq 4$ and the condition does not depend on the dimension $n$. 
		\item Since $1+\frac{2}{n} < 1+ \frac{4}{n+1}$ for $n\geq 4$, the result has been obtained in \cite{morgan2023global} if $1\leq \mu_{\M} \leq \rO<1+\frac{2}{n}$. Therefore, we may focus on $\rO \leq \mu_{\M} <2$. 
		\item The assumption of $W^{2,p}$-initial data for $p>n$ is to obtain local existence of solutions, which was shown in e.g. \cite{sharma2016global}. Reducing the bounded initial data or $W^{1,p}$-initial for smaller $p$, see e.g. \cite{augner2024analysis}, is left for future investigation.
	\end{itemize}
\end{remark}

Let us describe the ideas to prove Theorem \ref{Theorem:1}.  
Firstly, the condition \eqref{A3} gives
\begin{align}\label{Intro:L1}
	\|u_i\|_{L^\infty(0,T;\LO{1})} + \|u_i\|_{L^1(M\times(0,T))} + \|v_j\|_{L^\infty(0,T;\LM{1})} \leq C_T.
\end{align}
In order to deal with $\mu_{\M}<2$ arbitrary, we extend the duality argument in \cite{morgan1989global} to the case of volume-surface systems. This duality method requires some bounds for the time integration of the solution, more precisely
\begin{align}\label{Intro:L_xL_t}
	\Big\|\int^t_0 u_i \,ds\Big\|_{\LO{\infty}} + \Big\|\int^t_0 u_i \,ds\Big\|_{\LOM{\infty}} + \Big\|\int^t_0 v_j \,ds\Big\|_{\LOM{\infty}} \leq C_T
\end{align} 
where $C_T$ is a constant depending on $T>0$.
%In \cite{morgan1989global}, since 
For classical mass dissipating reaction-diffusion systems in a domain, i.e. $m_2 = 0$, one has (assuming mass dissipation)
\begin{align*}
	\pa_t \Big(\sumi u_i\Big) - \Delta \Big(\sumi d_i u_i\Big) \leq 0.
\end{align*}
Integrating over $(0,t)$, we have
\begin{align*}
	\sumi  u_i(t)  - \Delta \Big(\int^t_0 \sumi d_i u_i \,ds\Big) \leq \sumi u_{i0},
\end{align*}
with homogeneous Neumann boundary conditions. Thus, an $\LO{\infty}$-bound for $\int^t_0 u_i \,ds$ can be easily obtained from comparison principle.    
In our case, similar computations give 
\begin{equation*}
	\left\{
	\begin{aligned}
		& - \Delta\left(\int_0^t\sumi d_iu_i ds\right) \le L\int_0^t \sumi u_i ds + Lt + \sumi u_{i0}, &&x\in\Omega,\\
		&\partial_\nu\left(\int_0^t \sumi d_iu_ids \right) \le L\int_0^t\sumi u_i ds + L\int_0^t\sumj v_jds + Lt, &&x\in M,\\
		&- \Delta_{M}\left(\int_0^t\sumj \delta_j v_jds \right) \le L\int_0^t\sumi u_i ds + L\int_0^t\sumj v_jds + Lt + \sumj v_{j0}, &&x\in M.
	\end{aligned}
	\right.
\end{equation*}
Due to the volume-surface coupling, it seems that the comparison principle is not applicable. Even if it would be, we only get
\begin{align*}
	\Big\|\int^t_0 u_i \,ds\Big\|_{\LO{\infty}}  + \Big\|\int^t_0 v_j \,ds\Big\|_{\LOM{\infty}} \leq C_T.
\end{align*} 
In this paper, we resolve this issue by using an energy method and a Moser iteration, in the spirit of \cite{alikakos1979lp}, to get the desired estimate \eqref{Intro:L_xL_t}. 
%Then we need to integrate this estimates in combination with the $L^p$-energy method.

\medskip
Let us briefly sketch the main steps of the proof of Theorem \ref{Theorem:1}, which can be roughly divided in several steps:
\begin{itemize}
	\item Step 1:  Firstly, the mass control condition \eqref{A3} gives, by using a duality method adapted to volume-surface systems,
	\begin{align*}
		\|u_i\|_{L^{\infty}(0,T;\LO{1})} + \|u_i\|_{\LQM{1}} + \|v_j\|_{L^{\infty}(0,T;\LOM{1})}\leq C_T.
	\end{align*}
	Then, the intermediate sum condition \eqref{A4} allows us to construct some $L^p$ energy functions and obtain for any $\eps>0$ the following inequality
	\begin{align} \label{Step2}
		\sumii  \left(\|u_i\|_{\LQ{p}}+\|u_i\|_{\LS{p}}\right)\leq C_{p,\eps,T} + \eps\sumjj \|v_j\|_{\LS{p}}
	\end{align} 
	for any positive integer $p\in \mathbb Z_+$.
	\item Step 2: Based on condition \eqref{A3}, we employ the Moser iteration technique to obtain  the important bounds 
	\begin{align*}
		\Big\|\int^t_0 u_i \,ds\Big\|_{\LQ{\infty}} + \Big\|\int^t_0 u_i \,ds\Big\|_{\LQM{\infty}} + \Big\|\int^t_0 v_j \,ds\Big\|_{\LQM{\infty}} \leq C_T.
	\end{align*}

	\item Step 3: By using crucial estimates in Step 2 and adapting the duality argument in \cite{morgan1990boundedness}, there exists for any $\eta>0$ a constant $C_{T,\eta}>0$ such that
	\begin{equation*} 
		\|v_j\|_{\LQM{p}} 
		\leq  C_{T,\eta}   +  \eta \sumjj\|v_j\|_{\LQM{p}} + C \sum_{k=1}^{j-1}  \|v_k\|_{\LQM{p}}.  
	\end{equation*}
	This ultimately leads to the boundedness of $v_j$ in $L^p(M\times(0,T))$, and consequently, thanks to \eqref{Step2}, %the boundedness of $u_i$ in $L^p(\Omega\times(0,T))$ as well as in $L^p(M\times(0,T))$.
	\begin{align*}
		\|u_i\|_{\LQ{p}} + \|u_i\|_{\LQM{p}} + \|v_j\|_{\LQM{p}} \leq C_T.
	\end{align*}
	Applying the polynomial growth of the nonlinearities in \eqref{A5} and regularization of the heat operator with inhomogeneous boundary conditions, we obtain finally 
	\begin{align*}
		\|u_i\|_{\LQ{\infty}} +  \|v_j\|_{\LQM{\infty}} \leq C_T,
	\end{align*}
	%$u_i\in L^\infty(\Omega\times(0,T))$ and $v_j\in L^\infty(M\times(0,T))$, 
	which concludes the global existence of bounded solutions.
	
	\item Step 4:  To obtain uniform-in-time bounds of the global solution, we use smooth cut-off function to obtain  
	\begin{align*}
		\|u_i\|_{\LQtaut{\infty}} +  \|v_j\|_{\LStaut{\infty}} \leq C,
	\end{align*}
	which concludes the uniform-in-time bounds of the global solution, and consequently completes the proof of Theorem \ref{Theorem:1}. 
\end{itemize}

\medskip
\noindent{\bf Organization of the paper.} In the next section, we give the local existence theorem and some useful lemmas. In Section \ref{Sec:GlobalExsitence}, we show that global existence of solutions  for the system \eqref{System}, while the uniform-in-time boundedness of solutions is shown in Section \ref{Sec:uniform-in-time}.

\medskip
\noindent{\bf Notation}. For the rest of this paper, we will use the following notation:
\begin{itemize} 
	\item For $0\leq \tau < T$,
	\begin{equation*}
		Q_{\tau,T}:= \Omega\times (\tau, T) \quad \text{and}\quad M_{\tau,T}:= M\times(\tau,T).
	\end{equation*}
	When $\tau = 0$, we simply write $Q_T$ and $M_T$.
	\item For $1\leq p < \infty$,
	\begin{equation*}
		\|f\|_{L^p(Q_{\tau,T})}:= \left( \int_\tau^T\intO |f(x,t)|^p \right)^{\frac 1p}
	\end{equation*}
	and for $p = \infty$,
	\begin{equation*}
		\|f\|_{L^\infty(Q_{\tau,T})}:= \mathrm{ess\,sup}_{Q_T}|f(x,t)|.
	\end{equation*}
	The spaces $L^p(M_{\tau,T})$ with $1\leq p\leq \infty$ are defined in a similar way.
	\item For $1\leq p \leq \infty$,
	\begin{equation*}
		W^{2,1}_p(Q_{\tau,T}):= \left\{f\in L^p(Q_{\tau,T}): \pa_t^r\pa_x^sf\in L^p(Q_{\tau,T}) \; \forall r,s\in \mathbb N, \, 2r+s\leq 2\right\}
	\end{equation*}
	with the norm
	\begin{equation*}
		\|f\|_{W^{2,1}_p(Q_{\tau,T})}:= \sum_{2r+s\leq 2}\|\pa_t^r\pa_x^s f\|_{L^p(Q_{\tau,T})}.
	\end{equation*}
	\item 
	The constant $C$ represents a generic constant whose value may vary from line to line, or even within the same line, unless explicitly stated. Sometimes it is written e.g. $C_T$ to note the dependence of $C$ on $T$.  
\end{itemize}

\section{Preliminaries} 
In this section, we start with the local existence of solution for the system \eqref{System}, and provide the blow-up criterion, which were proved in \cite{sharma2016global}.  
\begin{theorem}[\cite{sharma2016global}, Theorem 3.2]\label{Theorem:LocalExistence}
	Assume  \eqref{A1}.
	For any smooth initial data $(u_0,v_0)\in (W^{2,p}(\Omega))^{m_1}\times (W^{2,p}(\M))^{m_2}$ for some $p>n$ satisfying the compatibility condition \eqref{compatibility}, there exists a unique classical solution to \eqref{System} on a maximal interval $(0,T_{\max})$, i.e. for any $0<T<T_{\max}$,
	\begin{equation*}
		(u,v)\in C([0,T];L^p(\Omega)^{m_1}\times L^p(\M)^{m_2}) \cap L^\infty(0,T;L^\infty(\Omega)^{m_1}\times \LM{\infty}^{m_2}),
	\end{equation*}
	\begin{equation*}
		u\in (C^{2,1}(\overline{\Omega}\times(\tau,T)))^{m_1}, \quad v \in (C^{2,1}(\M\times(\tau,T)))^{m_2} \quad \text{ for all } \, 0<\tau<T,
	\end{equation*}
	and the equations \eqref{System} satisfy pointwise.
	
	\medskip
	The following blow-up criterion holds
	\begin{equation}\label{BlowUpCriterion}
		T_{\max}<+\infty \quad \Longrightarrow \quad \limsup_{t\rightarrow T_{\max}} \left(\sumi\|u_i(t)\|_{\LO{\infty}} + \sumjj \|v_j(t)\|_{\LM{\infty}}\right) = +\infty.
	\end{equation}
	Moreover, if \eqref{A2} holds, then $(u(t),v(t))$ is non-negative provided the initial data $(u_0,v_0)$ is non-negative.
\end{theorem}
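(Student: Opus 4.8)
The plan is to recast \eqref{System} as a semilinear parabolic problem, solve it locally by a fixed-point argument built on linear parabolic estimates, bootstrap to classical regularity, and then obtain the maximal interval, the blow-up alternative, and non-negativity by continuation and energy arguments; this is the strategy of \cite{sharma2016global}. Fix $p>n$, the point being that the spatial trace onto $M$ of the gradient of a $W^{2,1}_p(Q_{T})$-function lies in $L^p(0,T;C(M))$, which makes the inhomogeneous Neumann data $d_i\pa_\nu u_i=G_i(u,v)$ meaningful and the volume--surface coupling closed. For small $T_0>0$, work on $Z_{T_0}:=W^{2,1}_p(Q_{T_0})^{m_1}\times W^{2,1}_p(M_{T_0})^{m_2}$ and, given $(\wt u,\wt v)\in Z_{T_0}$, define $\Phi(\wt u,\wt v)=(u,v)$ by solving the \emph{linear, decoupled} problems $\pa_t u_i=d_i\Delta u_i+F_i(\wt u)$ in $Q_{T_0}$ with $d_i\pa_\nu u_i=G_i(\wt u|_M,\wt v)$ on $M_{T_0}$ and $u_i(0)=u_{i0}$, and $\pa_t v_j=\delta_j\Delta_M v_j+H_j(\wt u|_M,\wt v)$ on $M_{T_0}$ with $v_j(0)=v_{j0}$. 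Maximal $L^p$-regularity for the Neumann Laplacian on $\Omega$ and for the Laplace--Beltrami operator on the closed manifold $M$ — equivalently, the analytic-semigroup formulation with the Neumann boundary operator — together with the compatibility condition \eqref{compatibility}, makes $\Phi$ well defined on $Z_{T_0}$; since $W^{2,1}_p\hookrightarrow C^{\alpha,\alpha/2}$ for $p>n$, the local Lipschitz assumption \eqref{A1} makes the nonlinear superposition maps locally Lipschitz into the data spaces, and the linear estimates gain a factor $T_0^{\theta}$ with $\theta>0$ in front of the nonlinear contribution. Hence on a small ball of $Z_{T_0}$ whose radius depends only on $\|(u_0,v_0)\|_{W^{2,p}}$, for $T_0$ small, $\Phi$ is a self-map and a contraction, and the Banach fixed point theorem yields a unique local solution on $[0,T_0]$.

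Given the local solution, $F_i(u)$, $G_i(u|_M,v)$, $H_j(u|_M,v)$ are Hölder continuous on compact subsets of $\overline\Omega\times(0,T_0]$ and of $M\times(0,T_0]$, so parabolic Schauder estimates up to the boundary give $u\in C^{2,1}(\overline\Omega\times(\tau,T_0))^{m_1}$ and $v\in C^{2,1}(M\times(\tau,T_0))^{m_2}$ for every $0<\tau<T_0$, the equations holding pointwise, while the construction gives continuity in time with values in $L^p$. Uniqueness: the difference of two solutions with the same data solves a linear problem whose right-hand side is, by \eqref{A1} and the embedding, controlled by the difference itself, so a maximal-regularity (or Gronwall) estimate on a short interval forces it to vanish, and a connectedness argument extends this to the whole common interval; patching maximal solutions yields the maximal interval $(0,T_{\max})$. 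For the blow-up criterion, suppose $T_{\max}<\infty$ while $N(t):=\sumi\|u_i(t)\|_{\LO{\infty}}+\sumjj\|v_j(t)\|_{\LOM{\infty}}$ stays bounded as $t\uparrow T_{\max}$. Then by \eqref{A1} the nonlinearities are bounded near $T_{\max}$; maximal $L^p$-regularity with $p$ large bounds $(u,v)$ in $W^{2,1}_p$ there, hence in a Hölder space, so the nonlinearities are Hölder and Schauder estimates bound $(u,v)$ in $C^{2+\alpha,1+\alpha/2}$ uniformly near $T_{\max}$; consequently $(u(t),v(t))$ converges in $C^2$ as $t\uparrow T_{\max}$ to an admissible initial datum (the compatibility condition passing to the limit), and re-applying the first step from a time just below $T_{\max}$ extends the solution past $T_{\max}$, a contradiction. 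This proves \eqref{BlowUpCriterion}.

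For non-negativity, assume \eqref{A2} and $(u_0,v_0)\ge0$, and replace the nonlinearities by their truncations $F_i(u^+)$, $G_i(u^+,v^+)$, $H_j(u^+,v^+)$, where $w^+$ denotes the vector of positive parts; these are still locally Lipschitz, so the first step produces a solution $(u,v)$ of the truncated system. Testing the $u_i$-equation with $-(u_i)_-\ge0$ and integrating by parts with the boundary condition gives
\[
\tfrac12\tfrac{d}{dt}\intO (u_i)_-^2 + d_i\intO|\nabla(u_i)_-|^2 = -\intM G_i(u^+,v^+)(u_i)_- - \intO F_i(u^+)(u_i)_-,
\]
and on $\{u_i<0\}$ one has $u_i^+=0$, so $F_i(u^+)\ge0$ and $G_i(u^+,v^+)\ge0$ by \eqref{A2}, whence the right-hand side is $\le0$; together with the analogous identity $\tfrac12\tfrac{d}{dt}\intM(v_j)_-^2+\delta_j\intM|\nabla_M(v_j)_-|^2=-\intM H_j(u^+,v^+)(v_j)_-\le0$ and the non-negativity of the initial data, we obtain $(u_i)_-\equiv0$ and $(v_j)_-\equiv0$, i.e. $(u,v)\ge0$. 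Then $u^+=u$ and $v^+=v$, so $(u,v)$ solves the original \eqref{System} and, by the uniqueness just established, coincides with the solution constructed before; hence that solution is non-negative.

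The main obstacle is the first step: setting up a functional-analytic framework in which the \emph{inhomogeneous, nonlinearly coupled} Neumann condition $d_i\pa_\nu u_i=G_i(u,v)$ and the surface reaction--diffusion equations coexist — that is, choosing the solution space (here $W^{2,1}_p$ with $p>n$) so that the trace of $\nabla u$ on $M$ and the surface unknowns lie in matching spaces, and then verifying the self-mapping and contraction properties of $\Phi$ with the correct power of $T_0$. This volume--surface coupling is exactly what distinguishes the argument from the classical reaction--diffusion case; once the linear theory with the right trace spaces is in place, the remaining steps are routine parabolic bootstrapping and energy estimates.
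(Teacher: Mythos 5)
The paper does not present a proof of this theorem: it is stated as a citation to Sharma--Morgan (\cite{sharma2016global}, Theorem~3.2), and the body of the paper relies on it without reproducing the argument. Your reconstruction follows what is essentially the standard route in that reference: a Banach fixed-point iteration in $W^{2,1}_p(Q_{T_0})^{m_1}\times W^{2,1}_p(M_{T_0})^{m_2}$ built on maximal $L^p$-regularity for the Neumann problem in $\Omega$ and for the Laplace--Beltrami operator on $M$, with the parameter $p>n$ chosen precisely so that the spatial trace of the gradient is pointwise-defined on $M$ and the volume--surface coupling closes; interior parabolic bootstrapping and Schauder estimates for classical regularity; a continuation argument for the blow-up alternative; and a truncation-plus-Stampacchia argument (testing with negative parts) for non-negativity under \eqref{A2}. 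Your non-negativity argument is correctly set up---on $\{u_i<0\}$ one has $(u^+)_i=0$ while $(u^+,v^+)$ remains in the non-negative orthant, so quasi-positivity applies to both $F_i(u^+)$ and $G_i(u^+,v^+)$, and similarly for $H_j$. The only caveat is that your sketch of the continuation step (convergence of $u(t)$ in $C^2$ as $t\uparrow T_{\max}$ and passage of the compatibility condition to the limit) is schematic rather than quantitative, but this is a routine point once uniform $C^{2+\alpha,1+\alpha/2}$ bounds near $T_{\max}$ are established, as you do. There is no paper-internal proof to diverge from, so there is nothing substantive to compare beyond noting that your outline matches the cited source's method.
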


We will now prove some interpolation inequalities, which will be used in the sequel analysis.
%\begin{lemma}\label{Lem:Boundary:key:relation}
%	For any $p>1$, there exists $\eps>0$, and a constant $C>0$ depending on $p$ and $\eps$, but not on $w$, such that
%	\begin{equation*}
	%		\begin{aligned}
		%			\intOM w^{p-1+ r} %&\leq C\intO w^{p -2+ r }|\na w| + C\intO w^{p-1+ r}\\
		%			&\leq \eps\intO w^{p-2}|\nabla w|^2 + C_{\eps}\intO w^{p-2 + 2r} + C\intO w^{p-1+ r}.
		%		\end{aligned}
	%	\end{equation*} 
%	In particular, if $r=1$  
%	\begin{equation*}
	%		\begin{aligned}
		%			\intOM w^{p}  
		%			\leq \eps\intO w^{p-2}|\nabla w|^2 + C_{\eps}\intO w^{p}.
		%		\end{aligned}
	%	\end{equation*} 
%\end{lemma}
%\begin{proof} 
%	For the boundary term $ \intOM w^{p-1+ r}$, using the interpolation trace inequality ( see e.g. \cite[Proof of Theorem 1.5.1.10]{grisvard2011elliptic}), we  obtain
%	\begin{equation*}
	%		\begin{aligned}
		%			\intOM w^{p-1+ r} &\leq C\intO w^{p -2+ r }|\nabla w| + C\intO w^{p-1+ r}\\
		%			&\leq \eps \intO w^{p-2}|\nabla w|^2 + C_{\eps}\intO w^{p-2 + 2r} + C\intO w^{p-1+ r}.
		%		\end{aligned}
	%	\end{equation*}
%	In particular, if $r=1$  
%	\begin{equation*}
	%		\begin{aligned}
		%			\intOM w^{p}  
		%			\leq \eps\intO w^{p-2}|\nabla w|^2 + C_{\eps}\intO w^{p}.
		%		\end{aligned}
	%	\end{equation*} 
%\end{proof}

\begin{lemma}\label{Lem:interpolation:Omega}
	For any $T>0$, and any $2<\kappa < \frac{2(n+2)}{n} $, there exists $C_{T,\kappa}>0$ such that
	\begin{equation}\label{Lem:interpolation:Omega:State}
		\|w\|^{\kappa}_{L^\kappa(Q_T)}\leq C_{T,\kappa}\|w\|^{\frac{2\kappa-(\kappa-2)n}{2}}_{L^\infty(0,T;L^2(\Omega))}\|w\|^{\frac{n(\kappa-2)}{2}}_{L^2(0,T;H^1(\Omega))}
	\end{equation}
	and
	\begin{equation}\label{Lem:interpolation:Omega:State2}
		\|w\|^{2}_{L^\kappa(Q_T)}\leq C_{T,\kappa}(\|w\|^{2}_{L^\infty(0,T;L^2(\Omega))} + \|w\|^{2}_{L^2(0,T;H^1(\Omega))}).
	\end{equation}
\end{lemma}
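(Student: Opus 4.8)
The plan is to prove both inequalities via the Gagliardo–Nirenberg interpolation inequality combined with integration in time and Hölder's inequality. First I would recall the standard Gagliardo–Nirenberg inequality on the bounded domain $\Omega \subset \R^n$ with $C^{2+\delta}$ boundary: for a function $w(\cdot,t) \in H^1(\Omega)$ and $2 < \kappa < \frac{2(n+2)}{n}$, one has
\begin{equation*}
	\|w(t)\|_{L^\kappa(\Omega)} \leq C\, \|w(t)\|_{H^1(\Omega)}^{\theta}\, \|w(t)\|_{L^2(\Omega)}^{1-\theta}, \qquad \theta = n\left(\frac12 - \frac1\kappa\right) = \frac{n(\kappa-2)}{2\kappa}.
\end{equation*}
The restriction $\kappa < \frac{2(n+2)}{n}$ guarantees $\theta\kappa < 2$, which is exactly what makes the time integration work below. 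Raising to the $\kappa$-th power gives $\|w(t)\|_{L^\kappa(\Omega)}^{\kappa} \leq C \|w(t)\|_{H^1(\Omega)}^{\theta\kappa} \|w(t)\|_{L^2(\Omega)}^{\kappa - \theta\kappa}$, where $\theta\kappa = \frac{n(\kappa-2)}{2}$ and $\kappa - \theta\kappa = \kappa - \frac{n(\kappa-2)}{2} = \frac{2\kappa - n(\kappa-2)}{2}$, matching the exponents in \eqref{Lem:interpolation:Omega:State}.

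Next I would integrate over $t \in (0,T)$. Bounding the $L^2(\Omega)$-factor by $\|w\|_{L^\infty(0,T;L^2(\Omega))}$ and pulling it out of the integral, there remains $\int_0^T \|w(t)\|_{H^1(\Omega)}^{\theta\kappa}\,dt$. Since $\theta\kappa = \frac{n(\kappa-2)}{2} < 2$ (here is where the hypothesis $\kappa < \frac{2(n+2)}{n}$ is used), I apply Hölder's inequality in time with exponents $\frac{2}{\theta\kappa}$ and its conjugate to get $\int_0^T \|w(t)\|_{H^1(\Omega)}^{\theta\kappa}\,dt \leq T^{1-\theta\kappa/2}\left(\int_0^T \|w(t)\|_{H^1(\Omega)}^2\,dt\right)^{\theta\kappa/2} = T^{1-\theta\kappa/2}\|w\|_{L^2(0,T;H^1(\Omega))}^{\theta\kappa}$. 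Absorbing $T^{1-\theta\kappa/2}$ and the Gagliardo–Nirenberg constant into $C_{T,\kappa}$ yields \eqref{Lem:interpolation:Omega:State}. For the second inequality \eqref{Lem:interpolation:Omega:State2}, I would take the $\kappa$-th root of \eqref{Lem:interpolation:Omega:State} to obtain a bound on $\|w\|_{L^\kappa(Q_T)}$, then square it, arriving at a product $\|w\|_{L^\infty(0,T;L^2(\Omega))}^{a}\|w\|_{L^2(0,T;H^1(\Omega))}^{b}$ with $a + b = 2$ and $a,b \geq 0$; Young's inequality $x^a y^b \leq a\, x^2/2 + b\, y^2/2 \lesssim x^2 + y^2$ (valid since $a,b \le 2$) then gives \eqref{Lem:interpolation:Omega:State2}.

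I do not expect a genuine obstacle here, since this is a standard parabolic interpolation lemma; the only point requiring care is verifying that the exponents produced by Gagliardo–Nirenberg are precisely those written in the statement and that $\theta\kappa < 2$ strictly (so that the time-Hölder step is legitimate and the power of $T$ is non-negative). One should also note the (harmless) dependence of the constant on $T$ entering through $T^{1-\theta\kappa/2}$ and, if one prefers to state Gagliardo–Nirenberg with the full $H^1$-norm rather than the homogeneous seminorm, through the domain $\Omega$ — both are already absorbed into $C_{T,\kappa}$.
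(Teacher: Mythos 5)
Your proposal is correct and follows essentially the same route as the paper: apply the Gagliardo--Nirenberg inequality pointwise in time, pull out the $L^\infty_t L^2_x$ factor, use $\frac{n(\kappa-2)}{2}<2$ (equivalent to $\kappa<\frac{2(n+2)}{n}$) to apply H\"older in time, and then deduce \eqref{Lem:interpolation:Omega:State2} from \eqref{Lem:interpolation:Omega:State} by Young's inequality. Your explicit computation of $\theta$ and verification that the exponents match is a small welcome addition, but the argument is identical in substance.
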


\begin{proof}
	The result of this lemma should be standard. However, since we cannot find an explicit reference, we give here a full proof for the sake of completeness. By using the Gagliardo-Nirenberg inequality, for any $\kappa>2$, we have 
	\begin{equation*}
		\|w\|^\kappa_{L^\kappa(\Omega)}\leq C_{T,\kappa}\|w\|^{\frac{2\kappa-(\kappa-2)n}{2}}_{L^2(\Omega)}\|w\|^{\frac{n(\kappa-2)}{2}}_{H^1(\Omega)}.
	\end{equation*} 
	Integrating over $(0,T)$, we have 
	\begin{equation*}
		\begin{split}
			\int_0^T \|w\|_{\LO{\kappa}}^\kappa &\leq C\int^T_0\|w\|^{\frac{2\kappa-(\kappa-2)n}{2}}_{L^2(\Omega)}\|w\|^{\frac{n(\kappa-2)}{2}}_{H^1(\Omega)}\\
			&\leq C\|w\|^{\frac{2\kappa-(\kappa-2)n}{2}}_{L^\infty(0,T;L^2(\Omega))} \int^T_0\|w\|^{\frac{n(\kappa-2)}{2}}_{H^1(\Omega)}.
		\end{split}
	\end{equation*}
	Due to the condition $\kappa < \frac{2(n+2)}{n}$, we have $\frac{n(\kappa-2)}{2}<  2$, and therefore we can use H\"{o}lder’s inequality to have
	\begin{equation*}
		\begin{split}
			\int^T_0\|w\|^{\frac{n(\kappa-2)}{2}}_{H^1(\Omega)}\leq C_T\|w\|^{\frac{n(\kappa-2)}{2}}_{L^2(0,T;H^1(\Omega))}.
		\end{split}
	\end{equation*}
	Inserting this into the previous inequality gives the desired interpolation inequality \eqref{Lem:interpolation:Omega:State}. 
	The estimate \eqref{Lem:interpolation:Omega:State2} can be obtained by using Young's inequality.
\end{proof}

\begin{lemma} \label{Lem:M}
	For any $T>0$, and any $2<\sigma < 2+ \frac{2}{n}$, there exists $C_{T}>0$ such that 
	\begin{equation}\label{Lem:M:State2}
		\begin{split}
			\| w\|_{\LQM{\sigma}}^{\frac{\sigma}{\sigma-1}}   
			&\leq C_{T}(\|w\|^{2}_{L^\infty(0,T;L^2(\Omega))} + \|w\|^{2}_{L^2(0,T;H^1(\Omega))} + 1).
		\end{split}
	\end{equation}
\end{lemma}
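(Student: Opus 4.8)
\emph{Step 1: reduction to a fixed-time trace interpolation inequality.} The plan is to reduce \eqref{Lem:M:State2} to a pointwise-in-time boundary interpolation inequality and then integrate in time, mirroring the proof of Lemma~\ref{Lem:interpolation:Omega}; since $M_{\tau,T}\subset M_T$ it suffices to treat $M_T$. First I would establish that, for a.e.\ $t\in(0,T)$,
\begin{equation*}
	\|w(t)\|_{L^\sigma(M)}\le C\,\|w(t)\|_{L^2(\Omega)}^{\,1-\theta}\,\|w(t)\|_{H^1(\Omega)}^{\,\theta},
	\qquad \theta:=\frac12+(n-1)\Big(\frac12-\frac1\sigma\Big)\in\Big(\tfrac12,1\Big).
\end{equation*}
The quickest route combines $[L^2(\Omega),H^1(\Omega)]_\theta=H^\theta(\Omega)$ (hence $\|w\|_{H^\theta(\Omega)}\le C\|w\|_{L^2(\Omega)}^{1-\theta}\|w\|_{H^1(\Omega)}^{\theta}$), the fractional trace theorem $H^\theta(\Omega)\to H^{\theta-1/2}(M)$, valid for $\theta>1/2$ (legitimate since $\partial\Omega$ is $C^{2+\delta}$), and the Sobolev embedding $H^{\theta-1/2}(M)\hookrightarrow L^\sigma(M)$ on the $(n-1)$-dimensional manifold $M$, which holds because $\theta$ is chosen precisely so that $\frac1\sigma=\frac12-\frac{\theta-1/2}{n-1}$. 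An equivalent, more hands-on derivation avoids fractional spaces: use the trace embedding $H^1(\Omega)\hookrightarrow L^{2(n-1)/(n-2)}(M)$, the multiplicative trace inequality $\|w\|_{L^2(M)}^2\le C\|w\|_{L^2(\Omega)}\|w\|_{H^1(\Omega)}$ (from the divergence theorem applied to $w^2\xi$ with $\xi\cdot\nu\ge 1$ on $\partial\Omega$), and Hölder on $M$ to interpolate $L^\sigma(M)$ between $L^2(M)$ and $L^{2(n-1)/(n-2)}(M)$; this requires $2<\sigma<2(n-1)/(n-2)$, which is implied by $\sigma<2+\frac2n$.

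\emph{Step 2: integrate in time, then a Young-type inequality.} Raising the pointwise bound to the power $\sigma$, integrating over $(0,T)$ and pulling the $L^2(\Omega)$-factor out in $L^\infty_t$ gives
\begin{equation*}
	\|w\|_{L^\sigma(M_T)}^{\sigma}\le C\,\|w\|_{L^\infty(0,T;L^2(\Omega))}^{(1-\theta)\sigma}\int_0^T\|w(t)\|_{H^1(\Omega)}^{\theta\sigma}\,dt.
\end{equation*}
The arithmetic heart of the lemma is the identity $\theta\sigma=\frac{n\sigma}{2}-(n-1)$, which shows that the hypothesis $\sigma<2+\frac2n$ is \emph{exactly} the condition $\theta\sigma<2$. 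Thus Hölder's inequality in $t$ on the finite interval $(0,T)$ gives $\int_0^T\|w\|_{H^1(\Omega)}^{\theta\sigma}\le C_T\|w\|_{L^2(0,T;H^1(\Omega))}^{\theta\sigma}$, whence $\|w\|_{L^\sigma(M_T)}\le C_T\,a^{\,1-\theta}b^{\,\theta}$ with $a:=\|w\|_{L^\infty(0,T;L^2(\Omega))}$ and $b:=\|w\|_{L^2(0,T;H^1(\Omega))}$. Raising this to the power $\frac{\sigma}{\sigma-1}$ produces $a^{p}b^{q}$ with $p+q=\frac{\sigma}{\sigma-1}<2$ (as $\sigma>2$), and the weighted Young inequality $a^{p}b^{q}=(a^{2})^{p/2}(b^{2})^{q/2}\,1^{(2-p-q)/2}\le a^{2}+b^{2}+1$ (up to a multiplicative constant) then yields \eqref{Lem:M:State2}.

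\emph{Where the difficulty lies.} There is no deep obstacle; it is careful bookkeeping in interpolation, in the same spirit as Lemma~\ref{Lem:interpolation:Omega}. The one point that needs attention is the matching of the critical exponent: one must verify that the lower bound on $\theta$ forced by the embedding into $L^\sigma(M)$ and the upper bound $\theta\sigma<2$ needed for the Hölder step in time can be met simultaneously, and that this is possible precisely when $\sigma<2+\frac2n$. A secondary, entirely routine point is to confirm that the trace and fractional Sobolev embeddings used in Step~1 are available for the $C^{2+\delta}$ boundary assumed throughout.
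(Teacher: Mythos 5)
Your proof is correct, and it takes a genuinely different route from the paper. You reduce the boundary estimate to a single \emph{fixed-time} trace interpolation inequality
\[
\|w(t)\|_{L^\sigma(M)}\le C\,\|w(t)\|_{L^2(\Omega)}^{\,1-\theta}\,\|w(t)\|_{H^1(\Omega)}^{\,\theta},\qquad \theta=\tfrac12+(n-1)\bigl(\tfrac12-\tfrac1\sigma\bigr),
\]
obtained either via $[L^2,H^1]_\theta=H^\theta(\Omega)\to H^{\theta-1/2}(M)\hookrightarrow L^\sigma(M)$ or, more elementarily, from the multiplicative trace bound $\|w\|_{L^2(M)}^2\lesssim\|w\|_{L^2(\Omega)}\|w\|_{H^1(\Omega)}$ combined with $H^1(\Omega)\to L^{2(n-1)/(n-2)}(M)$ and H\"older on $M$ (the resulting exponent $\theta=\frac{1+\lambda}{2}$ with $\lambda=2(n-1)(\frac12-\frac1\sigma)$ agrees with the fractional route). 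After integrating in $t$ one has $\theta\sigma=\frac{n\sigma}{2}-(n-1)$, and the hypothesis $\sigma<2+\frac2n$ is precisely $\theta\sigma<2$, which licenses H\"older in time and then the weighted Young step. The paper instead applies Grisvard's integral trace estimate $\int_M w^\sigma\le C\sigma\int_\Omega w^{\sigma-1}|\nabla w|+C\int_\Omega w^\sigma$, integrates in time, and then invokes the already-proved interior space-time interpolation (Lemma~\ref{Lem:interpolation:Omega}) to handle the resulting $\|w\|_{L^{2\sigma-2}(Q_T)}^2$ and $\|w\|_{L^\sigma(Q_T)}^2$ terms; the restriction $\sigma<2+\frac2n$ enters there through $2\sigma-2<\frac{2(n+2)}{n}$, so the exponent constraint is the same as yours in disguise. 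Your approach is more self-contained in that it produces the trace-interpolation inequality explicitly (and is closer in structure to the proof of Lemma~\ref{Lem:interpolation:M}), while the paper's avoids fractional Sobolev spaces entirely and reuses Lemma~\ref{Lem:interpolation:Omega}. One small caveat: your secondary route via $H^1(\Omega)\to L^{2(n-1)/(n-2)}(M)$ requires $n\ge 3$, whereas the fractional route and the paper's route cover $n=2$ as well; since the main theorem assumes $n\ge 4$ this is immaterial here, but worth noting if the lemma is to be cited independently.
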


\begin{proof}
	Using the interpolation trace inequality ( see e.g. \cite[Proof of Theorem 1.5.1.10, page 41]{grisvard2011elliptic}), we  obtain
	\begin{align*}
		\intOM w^{\sigma} &\leq C \sigma\intO w^{\sigma-1} |\nabla w| + C \intO  w^{\sigma} \\
		&\leq C \sigma \Big(\intO |\nabla w|^2\Big)^{\frac{1}{2}} \Big(\intO w^{2\sigma -2}\Big)^{\frac{1}{2}}      + C \intO  w^{\sigma}.
	\end{align*}
	Integrating over $(0,T)$ and using Young's inequality, we have
	\begin{align*}
		\intQM w^{\sigma}  
		&\leq C \sigma  \intQ |\nabla w|^2 +  C \sigma \intQ w^{2\sigma -2}    + C \intQ  w^{\sigma}.
	\end{align*}
	Therefore,
	\begin{align*}
		\| w\|_{\LQM{\sigma}}^{\frac{2\sigma}{2\sigma-2}}  
		&\leq C \sigma^{\frac{2}{2\sigma-2}}   \|\nabla w\|_{\LQ{2}}^{\frac{4}{2\sigma-2}} +  C \sigma^{\frac{2}{2\sigma-2}} \|w\|_{\LQ{2\sigma-2}}^{2}    + C \|w\|_{\LQ{\sigma}}^{\frac{2\sigma}{2\sigma-2}}\\
		&\leq C \sigma^{\frac{2}{2\sigma-2}}   (\|\nabla w\|_{\LQ{2}}^{2} + C) +  C \sigma^{\frac{2}{2\sigma-2}} \|w\|_{\LQ{2\sigma-2}}^{2}    + C (\|w\|_{\LQ{\sigma}}^{2} + C)\\
		&\leq C (\|\nabla w\|_{\LQ{2}}^{2} + C) +  C  \|w\|_{\LQ{2\sigma-2}}^{2}    + C (\|w\|_{\LQ{\sigma}}^{2} + C).
	\end{align*}
	For the term $\|w\|_{\LQ{2\sigma-2}}^{2}$. Since $2< 2\sigma -2 <   \frac{2(n+2)}{n} $ due to $2<\sigma < 2+ \frac{2}{n} $, by using Lemma \ref{Lem:interpolation:Omega}, we have 
	\begin{align*}
		\|w\|^{2}_{L^{2\sigma-2}(Q_T)}\leq C_{T}(\|w\|^{2}_{L^\infty(0,T;L^2(\Omega))} + \|w\|^{2}_{L^2(0,T;H^1(\Omega))})
	\end{align*}
	and similarly
	\begin{align*}
		\|w\|^{2}_{L^{\sigma}(Q_T)}\leq C_{T}(\|w\|^{2}_{L^\infty(0,T;L^2(\Omega))} + \|w\|^{2}_{L^2(0,T;H^1(\Omega))})
	\end{align*}
	Thus, we have
	\begin{align*}
		\| w\|_{\LQM{\sigma}}^{\frac{\sigma}{\sigma-1}}   
		&\leq C_{T}(\|w\|^{2}_{L^\infty(0,T;L^2(\Omega))} + \|w\|^{2}_{L^2(0,T;H^1(\Omega))} + 1).
	\end{align*}
	This finish the proof of Lemma \ref{Lem:M}.	  
\end{proof}

\begin{lemma}\label{Lem:interpolation:M}
	For any $T>0$, and any $2<\xi <  \frac{2(n+1)}{n-1} $, there exists $C_{T,\xi}>0$ such that
	\begin{equation}\label{Lem:interpolation:M:State}
		\|w\|^{\xi}_{L^\xi(M_T)}\leq C_{T,\xi}\|w\|^{\frac{2\xi-(\xi-2)(n-1)}{2}}_{L^\infty(0,T;L^2(\M))}\|w\|^{\frac{(n-1)(\xi-2)}{2}}_{L^2(0,T;H^1(\M))}
	\end{equation}
	and
	\begin{equation}\label{Lem:interpolation:M:State2}
		\|w\|^{2}_{L^\xi(M_T)}\leq C_{T,\xi}(\|w\|^{2}_{L^\infty(0,T;L^2(\M))} + \|w\|^{2}_{L^2(0,T;H^1(\M))})
	\end{equation}
\end{lemma}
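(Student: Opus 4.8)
The plan is to follow verbatim the structure of the proof of Lemma \ref{Lem:interpolation:Omega}, with the spatial dimension $n$ of $\Omega$ replaced by the dimension $n-1$ of the compact boundary manifold $\M=\partial\Omega$. First I would invoke the Gagliardo–Nirenberg interpolation inequality on the closed $(n-1)$-dimensional Riemannian manifold $\M$ (smooth since $\partial\Omega\in C^{2+\delta}$): for $2<\xi<\frac{2(n+1)}{n-1}$ the associated interpolation parameter $\theta=\frac{(n-1)(\xi-2)}{2\xi}$ lies in $[0,1)$, hence
\begin{equation*}
	\|w\|^{\xi}_{L^\xi(\M)}\leq C_{\xi}\,\|w\|^{\frac{2\xi-(\xi-2)(n-1)}{2}}_{L^2(\M)}\,\|w\|^{\frac{(n-1)(\xi-2)}{2}}_{H^1(\M)},
\end{equation*}
where one checks that the two exponents on the right sum to $\xi$.

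Next I would integrate this inequality over $(0,T)$, bound the $L^2(\M)$-factor pointwise in time by $\|w\|_{L^\infty(0,T;L^2(\M))}$, and pull it out of the time integral, exactly as in the proof of Lemma \ref{Lem:interpolation:Omega}. This leaves $\int_0^T\|w\|^{\frac{(n-1)(\xi-2)}{2}}_{H^1(\M)}\,dt$, and the hypothesis $\xi<\frac{2(n+1)}{n-1}$ is precisely what guarantees $\frac{(n-1)(\xi-2)}{2}<2$; Hölder's inequality in time then bounds this by $C_T\|w\|^{\frac{(n-1)(\xi-2)}{2}}_{L^2(0,T;H^1(\M))}$, yielding \eqref{Lem:interpolation:M:State}. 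For \eqref{Lem:interpolation:M:State2}, write $a=\frac{2\xi-(\xi-2)(n-1)}{2}$ and $b=\frac{(n-1)(\xi-2)}{2}$ (so $a+b=\xi$), raise \eqref{Lem:interpolation:M:State} to the power $2/\xi$, and apply Young's inequality with the conjugate exponents $\xi/a$ and $\xi/b$ to conclude $\|w\|^2_{L^\xi(M_T)}\leq C_{T,\xi}(\|w\|^2_{L^\infty(0,T;L^2(\M))}+\|w\|^2_{L^2(0,T;H^1(\M))})$.

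There is no genuine obstacle here — the argument is entirely parallel to Lemma \ref{Lem:interpolation:Omega}. The only points requiring (minor) care are the legitimacy of the Gagliardo–Nirenberg inequality on the manifold $\M$, which is standard for a smooth compact boundary, and verifying that the interpolation parameter $\theta$ stays in the admissible range $[0,1]$ throughout the stated window $2<\xi<\frac{2(n+1)}{n-1}$; both are straightforward.
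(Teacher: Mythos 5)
Your proposal is correct and follows exactly the same route as the paper: apply the Gagliardo--Nirenberg inequality on the compact $(n-1)$-dimensional boundary manifold $\M$ (the paper cites \cite[Theorem 3.70]{aubin2012nonlinear}), integrate in time pulling out the $L^\infty(0,T;L^2(\M))$ factor, use H\"older in time since $\frac{(n-1)(\xi-2)}{2}<2$, and then Young's inequality for the second estimate. No discrepancies.
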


\begin{proof}
	%	The result of this lemma should be standard. However, since we cannot find an explicit reference, we give here a full proof. 
	By using the Gagliardo-Nirenberg inequality on the compact $(n-1)$-dimensional manifold $M$ without boundary, see e.g. \cite[Theorem 3.70]{aubin2012nonlinear}, we have for any $\xi>2$
	\begin{equation*}
		\|w\|^\xi_{L^\xi(\M)}\leq C_{T,\xi}\|w\|^{\frac{2\xi-(\xi-2)(n-1)}{2}}_{L^2(\M)}\|w\|^{\frac{(n-1)(\xi-2)}{2}}_{H^1(\M)}.
	\end{equation*}
	Therefore, we have
	\begin{equation*}
		\begin{split}
			\intQM w^\xi \,dxdt&\leq C\int^T_0\|w\|^{\frac{2\xi-(\xi-2)(n-1)}{2}}_{L^2(\M)}\|w\|^{\frac{(n-1)(\xi-2)}{2}}_{H^1(\M)}dt\\
			&\leq C\|w\|^{\frac{2\xi-(\xi-2)(n-1)}{2}}_{L^\infty(0,T;L^2(\M))}\int^T_0\|w\|^{\frac{(n-1)(\xi-2)}{2}}_{H^1(\M)}dt.
		\end{split}
	\end{equation*}
	It is noted that  $\frac{(n-1)(\xi-2)}{2}< 2$ due to $2<\xi< \frac{2(n+1)}{n-1}$, and therefore we can use H\"{o}lder’s inequality to have
	\begin{equation*}
		\begin{split}
			\int^T_0\|w\|^{\frac{(n-1)(\xi-2)}{2}}_{H^1(\M)}dt\leq C_T\|w\|^{\frac{(n-1)(\xi-2)}{2}}_{L^2(0,T;H^1(\M))}.
		\end{split}
	\end{equation*}
	Inserting this into the previous inequality gives the desired interpolation inequality \eqref{Lem:interpolation:M:State}. 
	The estimate \eqref{Lem:interpolation:M:State2} can be obtained by using Young's inequality.
\end{proof}

The following elementary lemma is from reference \cite{morgan2023global}, where the detailed proof is provided.
\begin{lemma}\label{Lem:key}
	Let $\{y_j\}_{j=1,\cdots , m_2}$ be a sequence of non-negative numbers. Assume that there is a constant $K>0$ such that, for any $\eps>0$, there exists $C_\eps>0$  such that if $k\in \{1,\cdots , m_2\}$ 
	\begin{equation*}
		y_k \leq C_\eps+ K\sum_{j=1}^{k-1}y_j + \eps\sumjj y_j,
	\end{equation*}
	where if $k=1$, the sum $\sum_{j=1}^{k-1}y_j$ is neglected. Then, there exists a constant $C$ independent of the sequence $\{y_j\}$ such that 
	\begin{equation}\label{bound_elementary}
		\sumjj y_j \leq C.
	\end{equation}
\end{lemma}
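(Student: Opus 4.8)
The plan is to prove, by finite induction on $k$, that each $y_k$ is controlled by a geometric factor times the single quantity $C_\eps + \eps S$, where $S:=\sumjj y_j$ is the (finite) sum we ultimately want to bound, and then to sum over $k$ and absorb the $\eps S$ contribution into the left-hand side by choosing $\eps$ sufficiently small. Since the $y_j$ are finite numbers, $S<\infty$, which is what makes the absorption legitimate.

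First, fix $\eps>0$ and let $C_\eps$ be the corresponding constant from the hypothesis. The claim is that
\[
y_k \le (1+K)^{\,k-1}\,(C_\eps + \eps S), \qquad k=1,\ldots,m_2.
\]
For $k=1$ this is immediate, since the hypothesis (with the empty sum dropped) gives $y_1 \le C_\eps + \eps S$. Assuming the bound for all indices $<k$, the hypothesis yields
\[
y_k \le C_\eps + K\sum_{j=1}^{k-1}(1+K)^{\,j-1}(C_\eps+\eps S) + \eps S
= \Big(1 + K\,\tfrac{(1+K)^{k-1}-1}{K}\Big)(C_\eps+\eps S)
= (1+K)^{\,k-1}(C_\eps+\eps S),
\]
which closes the induction.

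Next, summing over $k=1,\ldots,m_2$ and setting $M:=\sum_{k=1}^{m_2}(1+K)^{\,k-1}=\tfrac{(1+K)^{m_2}-1}{K}$, a constant depending only on $K$ and $m_2$ (and not on the sequence), we obtain
\[
S \le M\,(C_\eps + \eps S).
\]
Now choose $\eps:=\tfrac{1}{2M}$ so that $M\eps=\tfrac12$; with the associated constant $C_\eps=C_{1/(2M)}$ this gives $S \le M C_{1/(2M)} + \tfrac12 S$, hence $S \le 2M\,C_{1/(2M)}=:C$, which is exactly \eqref{bound_elementary} and is independent of $\{y_j\}$.

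The only step warranting care — and the one I would single out — is the final absorption: it uses that $S$ is finite a priori, so that $\tfrac12 S$ may be moved to the left-hand side, and it exploits the order of quantifiers in the hypothesis, namely that $\eps$ (hence $C_\eps$) may be selected \emph{after} computing $M$, since $M$ does not depend on the sequence. Everything else is a routine geometric-series computation.
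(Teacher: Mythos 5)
Your proof is correct. Since the paper does not reproduce the argument but refers the reader to \cite{morgan2023global} for details, there is no in-paper proof to compare against; your finite induction $y_k \le (1+K)^{k-1}(C_\eps+\eps S)$ followed by summing, choosing $\eps = 1/(2M)$ with $M = ((1+K)^{m_2}-1)/K$, and absorbing $\tfrac12 S$ is the natural and essentially standard argument for a recursion of this form. You are also right to flag the two points that make the absorption legitimate: $S$ is finite a priori (a finite sum of non-negative reals), and $M$ is determined by $K$ and $m_2$ alone, so $\eps$ (and hence $C_\eps$) may be chosen after $M$ is known, giving a final bound $C = 2M\,C_{1/(2M)}$ that depends only on $K$, $m_2$, and the hypothesis constants, not on the particular sequence.
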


\section{Global existence} \label{Sec:GlobalExsitence}
\subsection{Moser iteration and bounds for time integration of solutions} 
We start with $L^\infty_tL^1_x$-bounds of solutions, which are proved in \cite{morgan2023global}.
\begin{lemma}\label{Lem:L^p:energy}
	Assume \eqref{A1}, \eqref{A2}, \eqref{A3} and \eqref{A4} with \eqref{Condition:Growth}. 
	It holds that
	\begin{align*}
		\|u_i\|_{L^{\infty}(0,T;\LO{1})} + \|u_i\|_{\LQM{1}} + \|v_j\|_{L^{\infty}(0,T;\LOM{1})}\leq C_T.
	\end{align*}
\end{lemma}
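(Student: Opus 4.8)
The plan is to push everything onto the two ``mass'' variables and then isolate the single genuinely delicate quantity, the boundary trace $\|u_i\|_{L^1(M_T)}$, via a duality argument adapted to the volume--surface coupling (in the spirit of the duality method for mass-controlled reaction--diffusion systems). By the normalization in \eqref{A3} we may take $\alpha_i=\beta_j=1$. Set $Z:=\sum_i u_i\ge0$, $W:=\sum_j v_j\ge0$, $P:=\sum_i d_i u_i$, $R:=\sum_j\delta_j v_j$, so that $d_{\min}Z\le P\le d_{\max}Z$ on $\overline\Omega\times(0,T)$ and $\delta_{\min}W\le R\le\delta_{\max}W$ on $M_T$, where $d_{\min}=\min_i d_i$, etc. Summing the equations of \eqref{System} and using \eqref{A2}--\eqref{A3} yields, for a classical solution on $[0,T]$ with $T<T_{\max}$,
\[
\partial_t Z-\Delta P\le L(Z+1)\ \text{ in }\Omega_T,\qquad \partial_\nu P\le L(Z+W+1)\ \text{ on }M_T,\qquad \partial_t W-\Delta_M R\le L(Z+W+1)\ \text{ on }M_T.
\]
Integrating the $W$-inequality over $M$ (the Laplace--Beltrami term drops since $M$ is closed) and the $Z$-inequality over $\Omega$ (turning $\int_\Omega\Delta P$ into $\int_M\partial_\nu P$), and applying Gronwall's lemma, reduces the whole lemma to the single bound $\|Z\|_{L^1(M_T)}\le C_T$: indeed these two integrations already give $\|W\|_{L^\infty(0,T;\LOM{1})}\le C_T(1+\|W_0\|_{\LOM{1}}+\|Z\|_{L^1(M_T)})$ and then $\|Z\|_{L^\infty(0,T;\LO{1})}\le C_T(1+\|Z_0\|_{\LO{1}}+\|W_0\|_{\LOM{1}}+\|Z\|_{L^1(M_T)})$, while $\|u_i\|_{L^1(M_T)}\le\|Z\|_{L^1(M_T)}$. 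So the trace term is the only obstruction, and all further work goes into it.

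For the trace I would use a duality estimate. Fix $\lambda\ge\max\{L,0\}$ and $0<\tau\le T$, and write $A:=P/Z\in[d_{\min},d_{\max}]$ (with $A\equiv d_1$ on $\{Z=0\}$, where $P=0$ as well, which is immaterial below). Let $\varphi=\varphi^\tau$ solve the backward linear problem
\[
-\partial_t\varphi-A\Delta\varphi=\lambda\varphi\ \text{ in }\Omega\times(0,\tau),\qquad A\,\partial_\nu\varphi=1+\lambda\varphi\ \text{ on }M\times(0,\tau),\qquad \varphi(\cdot,\tau)=0.
\]
Since $A$ is measurable, bounded and uniformly positive and the boundary inhomogeneity is bounded, the (reversed-time) maximum principle gives $0\le\varphi\le C_\tau$ on $\overline\Omega\times[0,\tau]$, with $C_\tau$ depending only on $\tau,\lambda,\Omega,d_{\min},d_{\max}$ --- crucially not on $(u,v)$ --- and $C_\tau\to0$ as $\tau\to0^+$. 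Testing the $Z$-inequality against $\varphi$, integrating by parts in $x$ and $t$, using $\varphi(\tau)=0$, the identity $P=AZ$, and the three defining relations for $\varphi$, the zeroth-order choice $\lambda\ge L$ makes the $\lambda\!\int\!\int_\Omega Z\varphi$ and $\lambda\!\int\!\int_M Z\varphi$ contributions, together with the $LZ\varphi$-terms coming from $\partial_t Z-\Delta P\le L(Z+1)$ and from $\partial_\nu P=\sum_iG_i\le L(Z+W+1)$, have a favourable sign and drop out; what survives, with the trace isolated on the left, is
\[
\int_0^\tau\!\!\int_M Z\ \le\ \int_\Omega Z_0\,\varphi(\cdot,0)\ +\ L\!\int_0^\tau\!\!\int_\Omega\varphi\ +\ L\!\int_0^\tau\!\!\int_M(W+1)\varphi\ \le\ C_\tau\big(1+\|Z_0\|_{\LO{1}}\big)+C_\tau\,\|W\|_{L^1(M\times(0,\tau))}.
\]

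To close, combine this with the $W$-bound from the first paragraph applied on $[0,\tau]$, namely $\|W\|_{L^1(M\times(0,\tau))}\le\tau\,\|W\|_{L^\infty(0,\tau;\LOM{1})}\le C\tau\big(1+\|W_0\|_{\LOM{1}}+\|Z\|_{L^1(M\times(0,\tau))}\big)$; since $C_\tau\to0$, one can fix a time step $\tau_0>0$, depending only on the structural constants and the geometry (not on $T$ or on $(u_0,v_0)$), so that the coefficient of $\|Z\|_{L^1(M\times(0,\tau_0))}$ on the right is $<\tfrac12$. Absorbing that term gives $\|Z\|_{L^1(M\times(0,\tau_0))}\le C\big(1+\|u_0\|_{\LO{1}}+\|v_0\|_{\LOM{1}}\big)$, and hence, by the first paragraph, all three norms on $[0,\tau_0]$. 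Repeating the argument on $[\tau_0,2\tau_0]$ with initial data $(u(\tau_0),v(\tau_0))$ --- whose $L^1$-norms are already controlled --- and iterating $\lceil T/\tau_0\rceil$ times covers $[0,T]$ with a constant $C_T$ that is finite for each $T<T_{\max}$, which is the claim.

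The hard part is the duality step: constructing the backward bulk dual problem with the inhomogeneous flux-type boundary condition, establishing the $(u,v)$-independent bound $\|\varphi\|_{L^\infty}\le C_\tau$ with $C_\tau\to0$, and carefully bookkeeping the integration by parts so that the boundary flux $\partial_\nu P=\sum_iG_i\le L(Z+W+1)$ together with the choice $\lambda\ge L$ genuinely leave $\int_0^\tau\!\int_M Z$ on the left against only $\|\varphi\|_{L^\infty}$-controlled terms on the right (rather than reproducing the trace with a bad constant); the short-time smallness of $C_\tau$ is precisely what makes the absorption work for arbitrary $L\in\R$. Everything else is Gronwall's lemma and bookkeeping. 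One should also record that all the manipulations are legitimate on any $[0,T]$ with $T<T_{\max}$, where by Theorem \ref{Theorem:LocalExistence} the solution is classical and bounded so all integrals are finite, and that the resulting constants do not involve $T_{\max}$.
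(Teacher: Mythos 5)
Your proposal follows a duality argument with a solution-dependent dual problem, which is the same broad strategy as the paper's: the paper defers the proof of this lemma to \cite{morgan2023global} and describes these $L^1$ bounds as coming from ``a duality method adapted to volume--surface systems''. The reduction in your first paragraph, integrating the $W$- and $Z$-inequalities and applying Gronwall to isolate the trace $\|Z\|_{L^1(M_T)}$ as the single quantity to control, is correct, and the bookkeeping of the integration by parts, including the cancellations between the $\lambda$-terms in the dual problem and the $L$-terms from \eqref{A3}, works out.

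That said, the estimate $0\le\varphi\le C_\tau$ for the backward dual problem $-\partial_t\varphi-A\Delta\varphi=\lambda\varphi$ in $\Omega\times(0,\tau)$, $A\,\partial_\nu\varphi=1+\lambda\varphi$ on $M\times(0,\tau)$, $\varphi(\cdot,\tau)=0$, is the load-bearing step and is less ``routine maximum principle'' than you suggest. The operator is in non-divergence form with a merely measurable coefficient $A=P/Z\in[d_{\min},d_{\max}]$, and the boundary condition is Robin with the destabilizing coefficient $-\lambda<0$. The lower bound $\varphi\ge 0$ does follow from comparison with the zero subsolution. For the upper bound, however, a constant-in-space barrier has zero flux and cannot satisfy $A\partial_\nu\Phi-\lambda\Phi\ge 1$, so you need a genuine boundary-layer supersolution (for instance $\Phi(x,s)=a(s)+b\,\phi_\kappa(x)$ with $\phi_\kappa$ solving $-\Delta\phi_\kappa+\kappa^2\phi_\kappa=0$ in $\Omega$, $\partial_\nu\phi_\kappa=1$ on $M$, and $\kappa$ chosen large relative to $\lambda$ and $d_{\min}$); this does yield a $(u,v)$-independent bound, but it has to be written out, together with the usual smooth-approximation of $A$ to justify solvability and the integration by parts. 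Moreover, the claim $C_\tau\to0$ as $\tau\to0^+$ is neither obvious (the required boundary flux forces the supersolution to be at least of order $\kappa^{-1}$ near $M$) nor needed: the coefficient of $\|Z\|_{L^1(M\times(0,\tau))}$ after inserting your $W$-bound already carries the explicit factor $\tau$ from $\|W\|_{L^1(M\times(0,\tau))}\le\tau\,\|W\|_{L^\infty(0,\tau;L^1(M))}$, so it suffices that $C_\tau$ be bounded uniformly for $\tau\le 1$. Replace the $C_\tau\to 0$ claim by this weaker one and supply the supersolution, and the argument is sound.
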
 

Under the mass control condition \eqref{A3},  we now show that the solution satisfies the following properties $\int^t_0 u_i(x,s) \,ds \in \LQ{\infty}, ~ \int^t_0 u_i(x,s) \,ds \in \LQM{\infty} ~\text{and} ~ \int^t_0 v_j(x,s) \,ds \in \LQM{\infty}$. 
This will play a crucial role in the proof of the main result. 
\begin{lemma}\label{Lem:L_xL^1_t}
	Assume \eqref{A1}, \eqref{A2} and \eqref{A3}. Then, there exists a constant  $C_{T}>0$ such that 
	\begin{align}\label{star}
		\Big\|\int^t_0 u_i \,ds\Big\|_{\LQ{\infty}} + \Big\|\int^t_0 u_i \,ds\Big\|_{\LQM{\infty}} + \Big\|\int^t_0 v_j \,ds\Big\|_{\LQM{\infty}} \leq C_T 
	\end{align}
	for any $i=1,\cdots,m_1$ and $j=1,\cdots,m_2$.
\end{lemma}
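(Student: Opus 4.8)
The plan is to integrate the mass-control inequalities of \eqref{A3} in time, use the monotonicity of the resulting time-integrals to replace the parabolic system by a coupled \emph{elliptic} volume-surface differential inequality, and then bootstrap from the $L^1$-bounds of Lemma \ref{Lem:L^p:energy} up to $L^\infty$ by an $L^p$-energy/Moser iteration carried out simultaneously in the bulk and on the surface. Concretely, set
\[
\Phi(x,t):=\int_0^t\sum_{i}d_iu_i(x,s)\,ds,\qquad \Psi(x,t):=\int_0^t\sum_{j}\delta_jv_j(x,s)\,ds .
\]
Both are nonnegative and nondecreasing in $t$, so $\|\Phi\|_{\LQ{\infty}}=\|\Phi(\cdot,T)\|_{\LO{\infty}}$ and $\|\Psi\|_{\LQM{\infty}}=\|\Psi(\cdot,T)\|_{\LOM{\infty}}$; since $u_i,v_j\geq0$ and the diffusion coefficients are bounded below by some $d_{\min}>0$, it suffices to bound $\Phi$, $\Psi$, and the trace of $\Phi$ on $M$ in $L^\infty$. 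Summing \eqref{System} over $i$ and over $j$, using \eqref{A3} with $\alpha_i=\beta_j=1$, integrating over $(0,t)$, dropping the nonnegative terms $\sum_iu_i(t),\sum_jv_j(t)$, and using $\sum_i\int_0^tu_i\leq d_{\min}^{-1}\Phi$ and $\sum_j\int_0^tv_j\leq d_{\min}^{-1}\Psi$, one obtains for every $t\in[0,T]$, with a constant $c$ depending only on $L,T,d_i,\delta_j,\|u_0\|_{\LO{\infty}},\|v_0\|_{\LOM{\infty}}$,
\begin{align*}
-\Delta\Phi(t) &\leq c\big(\Phi(t)+1\big), &&\text{in }\Omega,\\
\partial_\nu\Phi(t) &\leq c\big(\Phi(t)+\Psi(t)+1\big), &&\text{on }M,\\
-\Delta_M\Psi(t) &\leq c\big(\Phi(t)+\Psi(t)+1\big), &&\text{on }M,
\end{align*}
with traces understood on $M$.

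Because of the volume-surface coupling the comparison principle is not available on this system, so I would argue by $L^p$-energy estimates instead. By Lemma \ref{Lem:L^p:energy}, $\sup_{t\leq T}\big(\|\Phi(t)\|_{\LO{1}}+\|\Phi(t)\|_{\LOM{1}}+\|\Psi(t)\|_{\LOM{1}}\big)\leq C_T$, and $L^1$-elliptic regularity for the Neumann problem on $\Omega$ and for the Laplace--Beltrami problem on the closed manifold $M$, applied to the inequalities above, upgrades this to a uniform-in-$t$ bound of $\Phi(t)$, of its trace on $M$, and of $\Psi(t)$ in $L^{p_0}(\Omega)$, $L^{p_0}(M)$ and $L^{p_0}(M)$ respectively, for some $p_0>1$; this is the base of the iteration. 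Next I would shift $\Phi\mapsto\Phi+C_1$, $\Psi\mapsto\Psi+C_1$ with a fixed $C_1\geq1$, which turns the inequalities into homogeneous ones (e.g. $-\Delta\Phi(t)\leq c\,\Phi(t)$) and prevents the forcing from accumulating through the iteration. Testing the bulk inequality by $\Phi(t)^{p-1}$ over $\Omega$ and the surface one by $\Psi(t)^{p-1}$ over $M$ (for $p\geq p_0$), using $\int_\Omega(-\Delta\Phi)\Phi^{p-1}=\tfrac{4(p-1)}{p^2}\|\nabla\Phi^{p/2}\|_{\LO{2}}^2-\int_M\partial_\nu\Phi\,\Phi^{p-1}$ together with the flux inequality and Hölder's and Young's inequalities, one arrives at
\[
\|\nabla\Phi(t)^{p/2}\|_{\LO{2}}^2+\|\nabla_M\Psi(t)^{p/2}\|_{\LOM{2}}^2\leq Cp\Big(\|\Phi(t)\|_{\LO{p}}^p+\|\Phi(t)\|_{\LOM{p}}^p+\|\Psi(t)\|_{\LOM{p}}^p\Big),
\]
with $C$ independent of $p$ and $t$.

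Adding the corresponding $L^2$-terms and invoking the Sobolev embeddings $H^1(\Omega)\hookrightarrow L^{2n/(n-2)}(\Omega)$, $H^1(\Omega)\hookrightarrow L^{2(n-1)/(n-2)}(M)$ (trace) and $H^1(M)\hookrightarrow L^{2(n-1)/(n-3)}(M)$ --- the last being valid precisely because $n\geq4$ and $M$ is an $(n-1)$-dimensional manifold --- one gets, with $\gamma:=\tfrac{n-1}{n-2}>1$,
\[
\|\Phi(t)\|_{L^{\gamma p}(\Omega)}^p+\|\Phi(t)\|_{L^{\gamma p}(M)}^p+\|\Psi(t)\|_{L^{\gamma p}(M)}^p\leq Cp\Big(\|\Phi(t)\|_{\LO{p}}^p+\|\Phi(t)\|_{\LOM{p}}^p+\|\Psi(t)\|_{\LOM{p}}^p\Big)
\]
for all $t\leq T$. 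Setting $p_k:=\gamma^kp_0$ and $W_k:=\sup_{t\leq T}\max\big(\|\Phi(t)\|_{L^{p_k}(\Omega)},\|\Phi(t)\|_{L^{p_k}(M)},\|\Psi(t)\|_{L^{p_k}(M)}\big)$, this recursion reads $W_{k+1}\leq(Cp_k)^{1/p_k}W_k$, and since $\sum_k p_k^{-1}\log(Cp_k)<\infty$ the product $\prod_k(Cp_k)^{1/p_k}$ converges, whence $\sup_k W_k\leq C_T$. Letting $k\to\infty$ gives $\sup_{t\leq T}\|\Phi(t)\|_{\LO{\infty}}$, $\sup_{t\leq T}\|\Phi(t)\|_{\LOM{\infty}}$, $\sup_{t\leq T}\|\Psi(t)\|_{\LOM{\infty}}\leq C_T$; undoing the shift and dividing by $d_{\min}$ (using $u_i,v_j\geq0$) yields \eqref{star}.

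The main obstacle is precisely the volume-surface coupling: $\Phi$ enters the surface inequality while $\Psi$ feeds back through the boundary flux, so the iteration cannot be run on the bulk quantity alone but must simultaneously control the bulk $L^p$-norm of $\Phi$, the trace $L^p$-norm of $\Phi$, and the $L^p$-norm of $\Psi$, with the trace Sobolev embedding linking bulk and surface. The delicate point is to keep every constant of order $p$ --- so that the amplification factor $(Cp_k)^{1/p_k}$ tends to $1$ fast enough for the product to converge --- despite this three-fold coupling; this is what forces the two devices above, namely the constant shift (which homogenizes the inequalities, so that no additive forcing term accumulates) and the use of the \emph{maximum} of the three $L^{p_k}$-norms rather than their sum (so that the combinatorial factor coming from the coupling is absorbed into the vanishing $(Cp_k)^{1/p_k}$). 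The remaining ingredients --- extracting a starting exponent $p_0>1$ from the $L^1$-bounds via elliptic regularity, and the bookkeeping of the iteration --- are routine.
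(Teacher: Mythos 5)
Your argument is a genuinely different route from the paper's. The paper keeps the parabolic structure: it defines $\widetilde W = \int_0^t\sum_i d_i u_i$, $\widetilde Z = \int_0^t\sum_j\delta_j v_j$, retains the time derivatives with bounded variable coefficients $\widetilde A, \widetilde B$, and runs an Alikakos-type parabolic Moser iteration, testing with $\widetilde W^{2^k-1}$, $\widetilde Z^{2^k-1}$, using Gagliardo--Nirenberg to go from $L^1$ to $L^{2^k}$ and a Gronwall argument in time; it then needs a \emph{second} space--time iteration (via the interpolation Lemmas \ref{Lem:interpolation:Omega}--\ref{Lem:interpolation:M}) to reach $\|\widetilde W\|_{L^\infty(M_T)}$. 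You instead exploit the sign of $\sum_i u_i(t)$ and $\sum_j v_j(t)$ to drop the time derivatives entirely, reducing at each fixed $t$ to a coupled \emph{elliptic} volume--surface differential inequality, and then run a single, spatial, Sobolev-embedding-based Moser iteration on the triple $(\|\Phi(t)\|_{L^p(\Omega)}, \|\Phi(t)\|_{L^p(M)}, \|\Psi(t)\|_{L^p(M)})$. This is a cleaner structure (no Gronwall, one iteration instead of two), at the price of requiring the fractional Sobolev gain $\gamma = (n-1)/(n-2)$ which relies on $n\ge 4$ for the exact exponent you quote (for $n=2,3$ the embedding $H^1(M)\to L^q(M)$ has a different form; the paper's GN-based iteration is dimension-agnostic). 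Since the theorem is stated for $n\ge 4$, this does not affect the final result.

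One step is too loose and should be replaced. You invoke ``$L^1$-elliptic regularity for the Neumann problem $\ldots$ applied to the inequalities above'' to jump from the $L^1$-bounds of Lemma \ref{Lem:L^p:energy} to an $L^{p_0}$ base case with $p_0>1$. But you only have a \emph{one-sided} differential inequality $-\Delta\Phi(t)\le c(\Phi(t)+1)$ with matching one-sided flux and surface inequalities, not an equation with controlled right-hand side in $L^1$, so elliptic regularity does not apply as stated (the right-hand sides $\sum_i u_{i0} - \sum_i u_i(t) + \int_0^t\sum_iF_i(u)\,ds$ etc.\ are not known to be in $L^1$ with controlled norm, only bounded above). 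The fix is routine and is essentially what the paper does at $k=1$: test by $\Phi$ and $\Psi$ themselves (the case $p=2$), integrate by parts, and use Gagliardo--Nirenberg/trace interpolation of the form $\|w\|_{L^2}^2\le\eps\|\nabla w\|_{L^2}^2 + C_\eps\|w\|_{L^1}^2$ to absorb the $L^2$-terms into the gradient terms and close against the $L^1$-bound, giving $p_0=2$. With that replacement, the rest of your iteration (the shift to homogenize the inequalities, the uniform-in-$p$ constants, the convergence of $\prod_k(Cp_k)^{1/p_k}$) is sound.
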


\begin{proof}
	By summing the equation of \eqref{System} and using \eqref{A3}, we have 
	\begin{align}\label{Lem:L_xL^1_t:Sys1}
		\begin{cases}
			\pa_t \(\sumii u_i\) - \Delta  \left( \sumii d_i  u_i\right)  = \sumii F_i(u)\leq L(|u|+ 1), \vspace*{0.2cm} &\text{on } Q_T,\\
			\partial_{\nu} (\sumii d_i  u_i) = \sumii G_i(u,v)\leq L(|u| + |v| + 1), \vspace*{0.2cm} &\text{on } \M_T,\\
			\pa_t (\sumjj v_j) - \Delta_{\M} ( \sumjj \delta_j v_j) = \sumjj H_j(u, v)\leq L(|u| + |v| + 1), &\text{on } \M_T,  \vspace*{0.2cm}\\
			\sumii u_i(\cdot,0) = \sumii u_{i0} \; \text{ in } \Omega, \quad \sumjj v_j(\cdot,0) = \sumjj v_{j0}  \text{on } M.
		\end{cases}  
	\end{align}
	We define
	\begin{align*}
		\widetilde{W}(x,t): = \int^t_0 \Big( \sumii d_i  u_i(x,s)\Big) \,ds \quad\text{and}\quad
		\widetilde{Z}(x,t): = \int^t_0 \Big( \sumjj \delta_j  v_j(x,s)  \Big) \,ds. 
	\end{align*}
	By integrating equations \eqref{Lem:L_xL^1_t:Sys1}  with respect to time and using the condition \eqref{A3},  we obtain the system
	\begin{align}\label{System:aux}
		\begin{cases}
			\widetilde{A} \pa_t \widetilde{W} - \Delta \widetilde{W} \leq  \Lambda \widetilde{W} + Lt, \vspace*{0.15cm}\\
			\nabla \widetilde{W} \cdot \nu \leq  \Lambda \widetilde{W} +  \lambda\widetilde{Z} + Lt, \vspace*{0.15cm}\\
			\widetilde{B} \pa_t \widetilde{Z} - \Delta_{\M} \widetilde{Z}  \leq  \Lambda \widetilde{W} + \lambda \widetilde{Z}  + Lt, \vspace*{0.15cm}\\
			\widetilde{W} (x,0) = 0, \quad \widetilde{Z} (x,0) = 0,
		\end{cases}
	\end{align}
	where $\Lambda= \frac{|L|}{\min\{d_i\}}$, $\lambda= \frac{|L|}{\min\{\delta_j\}}$ and $\widetilde{A}$,  $\widetilde{B}$ are given by
	\begin{align*}
		0< \frac{1}{\max\{d_1,\cdots,d_{m_1}\}}:=\underline{a}   \leq \widetilde{A}(x,t) = \frac{\sumii u_i}{\sumii d_i  u_i } \leq \overline{a}=:\frac{1}{\min\{d_1,\cdots,d_{m_1}\}}
	\end{align*} 
	and  
	\begin{align*}
		0< \frac{1}{\max\{\delta_1,\cdots,\delta_{m_2}\}}:=\underline{b}\leq\widetilde{B}(x,t) = \frac{\sumjj v_j}{\sumjj \delta_j  v_j}\leq \overline{b}=:\frac{1}{\min\{\delta_1,\cdots,\delta_{m_2}\}}.
	\end{align*}

	The proof uses the ideas from \cite{alikakos1979lp}.  
	For any $k\geq1$,  multiplying the first equation of System \eqref{System:aux} by $\widetilde{W}^{2^k-1}$ and integrating over $\Omega$, and similarly,  multiplying the third equation of system \eqref{System:aux} by $\widetilde{Z}^{2^k-1}$ and integrating over $\M $, summing the results provide
	\begin{align*}
		& \intO \widetilde{A} (\partial_t\widetilde{W})\widetilde{W}^{2^{k-1}} + \intOM \widetilde{B}  (\partial_t\widetilde{Z})\widetilde{Z}^{2^{k-1}} + \frac{2^k-1}{2^{2k-2}} \intO   |\nabla (\widetilde{W}^{2^{k-1}})|^2 + \frac{2^k-1}{2^{2k-2}}  \intOM   |\nabla_{\M} (\widetilde{Z}^{2^{k-1}})|^2  \\
		& \leq  \Lambda\intO \widetilde{W}^{2^k} + Lt \intO  \widetilde{W}^{2^k-1} + \Lambda\intOM \widetilde{W}^{2^k}  + \lambda\intOM \widetilde{W}^{2^k-1}  \widetilde{Z} + Lt\intOM \widetilde{W}^{2^k-1}   \\ 
		&+  \lambda\intOM \widetilde{Z}^{2^k} +  Lt \intOM \widetilde{Z}^{2^k-1} + \Lambda \intOM  \widetilde{W} \widetilde{Z}^{2^k-1}.
	\end{align*}
	Thanks to the fact that $\partial_t \widetilde{W} \ge 0$ and $\partial_t \widetilde{Z} \ge 0$, we have
	\begin{equation*}
		\intO \widetilde{A} (\partial_t\widetilde{W})\widetilde{W}^{2^{k-1}} \ge \frac{\underline{a}}{2^k}\frac{d}{dt}\intO \widetilde{W}^{2^k}, \quad \text{ and } \quad \intOM \widetilde{B} (\partial_t\widetilde{Z})\widetilde{Z}^{2^{k-1}} \ge \frac{\underline{b}}{2^k}\frac{d}{dt}\intOM \widetilde{Z}^{2^k}.
	\end{equation*}
	For the some terms of the right hand side with the exponent $2^{k-1}$, we have the inequalities   
	\begin{align*}
		Lt \intO  \widetilde{W}^{2^k-1} + Lt\intOM \widetilde{W}^{2^k-1} +  Lt \intOM \widetilde{Z}^{2^k-1} 
		&\leq \frac{C t }{2^k}  + \frac{|L| (2^k -1) t}{2^k} \intO  \widetilde{W}^{2^k}  \\
		& \quad+ \frac{|L|(2^k -1)}{2^k} \intOM \widetilde{W}^{2^k}  
		+ \frac{|L| (2^k -1) t}{2^k} \intOM \widetilde{Z}^{2^k} 
	\end{align*}
	for some $C$ depending only on $L$, $M$ and $|\Omega|$, and
	\begin{align*}
		\lambda\intOM \widetilde{W}^{2^k-1}  \widetilde{Z} + 
		\Lambda \intOM  \widetilde{W} \widetilde{Z}^{2^k-1} &\leq \frac{\lambda + \Lambda(2^k -1)}{2^k} \intOM \widetilde{Z}^{2^k} + \frac{\Lambda+ \lambda(2^k -1)}{2^k} \intOM \widetilde{W}^{2^k}.
	\end{align*} 
	Noted that the above estimates gives 
	\begin{align}\label{Lem:L_xL^1_t:Proof1}
		\begin{split}
			&\frac{\underline{a}}{2^k} \frac{d}{dt}\intO   \widetilde{W}^{2^k} + \frac{\underline{b}}{2^k}\frac{d}{dt} \intOM  \widetilde{Z}^{2^k} + \frac{2^k-1}{2^{2k-2}} \intO   |\nabla (\widetilde{W}^{2^{k-1}})|^2 + \frac{2^k-1}{2^{2k-2}}  \intOM   |\nabla_{\M} (\widetilde{Z}^{2^{k-1}})|^2  \\
			& \leq \Big( \Lambda+  \frac{|L| (2^k -1) t}{2^k} \Big) \intO \widetilde{W}^{2^k} + \Big( \Lambda+ \frac{|L|(2^k -1)t + \lambda(2^k -1) + \Lambda}{2^k} \Big)\intOM \widetilde{W}^{2^k}   \\ 
			&\quad + \Big( \lambda+ \frac{|L| (2^k -1)  t + \lambda+ \Lambda(2^k -1)}{2^k} \Big)\intOM \widetilde{Z}^{2^k} + \frac{Ct}{2^k} \\
			& \leq  \left( \Lambda+ |L|t \right) \intO \widetilde{W}^{2^k} + \Big( \Lambda+ |L|t+ \lambda+ \frac{\Lambda}{2^k} \Big)\intOM \widetilde{W}^{2^k}  + \Big( \lambda+ |L|t  + \Lambda+ \frac{\lambda}{2^k}  \Big)\intOM \widetilde{Z}^{2^k} + \frac{Ct}{2^k}.
		\end{split}
	\end{align}
	Therefore, there exists $\alpha>0$ independent of $k$ such that
	\begin{align*}
		&\frac{1}{2}  \frac{d}{dt} \left(\intO \widetilde{W}^{2^k} + \intOM \widetilde{Z}^{2^k}\right) + \alpha \left(\intO   |\nabla (\widetilde{W}^{2^{k-1}})|^2 +   \intOM   |\nabla_{\M} (\widetilde{Z}^{2^{k-1}})|^2 \right)  \\
		& \leq  c_1 2^{k-1} \intO \widetilde{W}^{2^k} +  c_2  2^{k-1} \intOM \widetilde{W}^{2^k}  + c_3  2^{k-1} \intOM \widetilde{Z}^{2^k} + c_4,
	\end{align*}
	where $c_1= \frac{\Lambda + |L|T}{\min \{\underline{a}, \underline{b} \}} $, $c_2 = \frac{2\Lambda + |L|T + \lambda}{\min \{\underline{a}, \underline{b} \}}$, $c_3 = \frac{2\lambda + |L|T  + \Lambda}{\min \{\underline{a}, \underline{b} \}} $ and $c_4 = \frac{CT}{2\min \{\underline{a}, \underline{b} \}} $. 
	Denote by $W_k:= \widetilde{W}^{2^{k-1}}$, $Z_k:= \widetilde{Z}^{2^{k-1}}$ and $C_k: = \max\{c_1, c_2, c_3\} 2^{k-1}$ we get 
	\begin{align}\label{Lem:L_xL^1_t:ProofG}
		\begin{split}
			&\frac{1}{2}\frac{d}{dt} \left(\intO W_k^2 + \intOM Z_k^2 \right) + \alpha \left(\intO   |\nabla W_k|^2 +   \intOM   |\nabla_{\M} Z_k|^2 \right)  \\
			& \leq  C_k \intO W_k^2 +  C_k \intOM W_k^2 + C_k \intOM Z_k^2 + c_4.
		\end{split}
	\end{align}  
	Using Gagliardo-Nirenberg inequality, for any $\eps_1>0$, we have
	\begin{align}\label{Lem:L_xL^1_t:ProofG1}
		\intO |W_k|^2  \le \eps_1 \intO|\nabla W_k|^2  + C_{\Omega,n}\eps_1^{-n/2} \left(\intO |W_k|  \right)^{2} 
	\end{align}
	and
	\begin{align*}
		\intOM |Z_k|^2  \le \frac{\alpha}{2} \intOM |\nabla_{\M} Z_k|^2  + C_{\M,n,\alpha}  \left(\intOM |Z_k|  \right)^{2} 
	\end{align*} 
	for some $C_{\Omega,n}$ depending only on $\Omega$ and $n$, $C_{\M,n,\alpha}$ depending only on $\M$, $n$ and $\alpha$. Combining the inequality \eqref{Lem:L_xL^1_t:ProofG1} with the trace interpolation inequality, for any $\eps>0$,  
	\begin{align*}
		\begin{aligned}
			\intOM|W_k|^2  &\le \eps  \intO |\nabla W_k|^2  + \frac{C_{\Omega,n}}{\eps}\intO |W_k|^2 \\
			&\le \left(\eps + \frac{C_{\Omega,n}\eps_1}{\eps}\right) \intO |\nabla W_k|^2  + \frac{C_{\Omega,n}^2 \eps_1^{-n/2}}{\eps} \left(\intO |W_k| \right)^2.
		\end{aligned}
	\end{align*}
	Now, we choose 
	\begin{align*} 
		\eps = \frac{\alpha}{8 C_k} \quad  \text{ and}\quad  \eps_1 \le \frac{\alpha \eps}{8C_{\Omega,n} C_k} =  \left(\frac{\alpha}{8}\right)^2\frac{1}{C_{\Omega,n} C_k^{2}} .
	\end{align*}
	It is noted that $\eps_1 C_k \le \frac{\alpha}{4}$ if we choose $C_{\Omega,n}$ large enough. Therefore
	\begin{align*}
		C_k\intO |W_k|^2  + C_k\intOM  |W_k|^2  &\le \frac{\alpha}{2}\intO |\nabla W_k|^2  + C_{\Omega,n} \left(C_k^{n+1} + C_k^{n+2}\right)  \left(\intO |W_k| \right)^2.
	\end{align*} 
	Thus we have
	\begin{align}\label{Lem:L_xL^1_t:Proof2}
		\begin{split}
			\frac{1}{2}\frac{d}{dt} \left(\intO W_k^2 + \intOM Z_k^2 \right) &+ \frac{\alpha}{2} \left(\intO |\nabla W_k|^2 +   \intOM   |\nabla_{\M} Z_k|^2 \right)  \\
			& \leq  \hat{C}_k \left(\intO |W_k| +   \intOM |Z_k| \right)^2 + c_5,
		\end{split}
	\end{align} 
	where $\hat{C}_k \sim (2^{n+2})^{k}$. Adding both sides with $(\alpha/2)(\intO W_k^2 + \intOM Z_k^2)$ and using the Gagliardo-Nirenberg inequality again, we arrive at
	\begin{align*}
		& \frac{d}{dt} \left(\intO W_k^2 + \intOM Z_k^2 \right) + \alpha \left(\intO W_k^2 + \intOM Z_k^2\right)   \leq  \hat{C}_k \left(\intO |W_k| +   \intOM |Z_k| \right)^2 + c_6 
	\end{align*}
	with $\hat{C}_k \sim (2^{n+2})^{k}$.
	Let $y_k:= \intO W_k^2 + \intOM Z_k^2 $, which gives
	\begin{align*}
		& \frac{d}{dt} y_k + \alpha y_k   \leq  \hat{C}_k \left(\intO |W_k| +   \intOM |Z_k| \right)^2 + c_6.
	\end{align*}
	An application of the classical Gronwall inequality yields for any $t\in (0,T)$
	\begin{align*}
		\begin{aligned} 
			y_k(t) &\le e^{-\alpha t} y_k(0) + \hat{C}_k\int_0^te^{-\alpha(t-s)} \Big(\intO |W_k| +   \intOM |Z_k| \Big)^2 \,ds + \frac{c_6}{\alpha}  \\
			&\le  \frac{\hat{C}_k}{\alpha} \Big(\sup_{s\in (0,T)}\Big(\intO |W_k| +   \intOM |Z_k| \Big)\Big)^2 + \frac{c_6}{\alpha},
		\end{aligned}
	\end{align*}
	where we used $y_k(0) =0$. Replacing $y_k = \intO W_k^2 + \intOM Z_k^2 = \intO \widetilde{W}^{2^{k}} + \intOM \widetilde{Z}^{2^{k}} $ again, and taking the root of order $2^k$ gives
	\begin{align*} 
		&\Big(\sup_{t\in(0,t_0)} \Big(\intO \widetilde{W}^{2^{k}} (t) + \intOM \widetilde{Z}^{2^{k}}  (t) \Big) \Big)^{1/2^k}  \le   \Big( \frac{\hat{C}_k}{\alpha} \Big(\sup_{s\in (0,t_0)}\Big(\intO \widetilde{W}^{2^{k-1}} + \intOM \widetilde{Z}^{2^{k-1}} \Big)\Big)^2 + \frac{c_6}{\alpha} \Big)^{1/2^k}.
	\end{align*}
	By denoting $Y_k$ the left hand side of this inequality, we get
	\begin{align*} 
		Y_k \le  \left( \frac{\hat{C}_k}{\alpha}Y_{k-1}^{2^k} + \frac{c_6}{\alpha} \right)^{1/2^k}.
	\end{align*}
	Therefore, with $c_7 =   \alpha^{-1} + c_6 \alpha^{-1}$, we have
	\begin{align*} 
		&\max\{Y_k,  1\}  \le \max\{Y_{k-1},  1\} c_7^{1/2^k} \hat{C}_k^{1/2^k}.
	\end{align*}
	Thus, by $\hat{C}_k \sim (2^{n+2})^k$
	\begin{equation*} 
		\begin{aligned}
			\max\{Y_k,  1\}  
			&\le \max\{Y_0,  1\} \prod_{k=1}^{\infty}c_7^{1/2^k}\hat{C}_k^{1/2^k} 
			\le C_{T} c_7^{\sum_{k\ge 1}(1/2^k)}(2^{n+2})^{\sum_{k\ge 1}(k/2^k)}
			\le C_{T}. 
		\end{aligned}
	\end{equation*} 
	Letting $k\to \infty$, we obtain finally the estimate 
	\begin{align}\label{Lem:L_xL^1_t:Proof:infty}
		\|\widetilde{W} \|_{\LQ{\infty}}  + \|\widetilde{Z} \|_{\LQM{\infty}} \leq C_T.
	\end{align} 
	This already shows the desired bounds for the first and the third terms on left hand side of \eqref{star}. To deal with the second term, we integrate over $(0,T)$ for \eqref{Lem:L_xL^1_t:ProofG} to get
	\begin{align*}
		\begin{split}
			&\sup_{t\geq 0} \left(\intO W_k^2 + \intOM Z_k^2 \right) + 2\alpha \left(\intQ   |\nabla W_k|^2 +   \intQM   |\nabla_{\M} Z_k|^2 \right)  \\
			& \leq  2 C_k \intQ W_k^2 +  2 C_k \intQM W_k^2 + 2 C_k \intQM Z_k^2 + 2 c_4 T.
		\end{split}
	\end{align*}
	where $C_k \sim  2^{k-1} $. 
	Applying $$2 C_k    \intOM W_k^2 \leq \alpha \intO |\nabla W_k|^2  + \tilde{C}_k  \intO W_k^2, $$ 
	where $\tilde{C}_k \sim  2^{2(k-1)} $.
	Thus
	\begin{align}\label{Lem:L_xL^1_t:Proof3}
		\begin{split}
			\sup_{t\geq 0} \left(\intO \widetilde{W}^{2^{k}}  + \intOM \widetilde{Z}^{2^{k}} \right) &+ \alpha \left(\intQ   |\nabla \widetilde{W}^{2^{k-1}}|^2 +   \intQM   |\nabla_{\M} \widetilde{Z}^{2^{k}}|^2 \right)  \\
			& \leq  \hat{C}_k \intQ \widetilde{W}^{2^{k}}   + \hat{C}_k \intQM \widetilde{Z}^{2^{k}} + 2 c_4t,
		\end{split}
	\end{align}
	where $\tilde{C}_k \sim  2^{2(k-1)} $. Applying Lemma \ref{Lem:M}, for any $k>2$ and $2<\sigma < 2+ \frac{2}{n} $,  we have  
	\begin{align}\label{Lem:L_xL^1_t:Proof4} 
		\|\widetilde{W}^{2^{k}}\|^{\frac{\sigma}{\sigma-1}}_{L^\sigma(M_T)}\leq C_{T} \Big(\|\widetilde{W}^{2^{k}}\|^{2}_{L^\infty(0,T;L^2(\Omega))} + \|\widetilde{W}^{2^{k}}\|^{2}_{L^2(0,T;H^1(\Omega))} +1\Big),
	\end{align} 
	where $1 + \frac{n}{n+2} < \frac{\sigma}{\sigma-1}<2$.	Applying Lemma \ref{Lem:interpolation:Omega}, for any $\varrho > 2$ and $2<\varrho <  \frac{2(n+2)}{n}$,  we have 
	\begin{align} \label{Lem:L_xL^1_t:Proof5}
		\|\widetilde{W}^{2^{k}}\|^{2}_{L^\varrho(Q_T)}\leq C_{T}(\|\widetilde{W}^{2^{k}}\|^{2}_{L^\infty(0,T;L^2(\Omega))} + \|\widetilde{W}^{2^{k}}\|^{2}_{L^2(0,T;H^1(\Omega))} ).
	\end{align} 
	
	Applying Lemma \ref{Lem:interpolation:M}, for any $\xi>2$ and $2<\xi <  \frac{2(n+1)}{n-1}$,  we have 
	\begin{align}\label{Lem:L_xL^1_t:Proof6} 
		\|\widetilde{Z}^{2^{k}}\|^{2}_{L^\xi(M_T)}\leq C_{T}(\|\widetilde{Z}^{2^{k}}\|^{2}_{L^\infty(0,T;L^2(\M))} + \|\widetilde{Z}^{2^{k}}\|^{2}_{L^2(0,T;H^1(\M))} ).
	\end{align} 
	By choosing $\varsigma := \min\{ \varrho,\sigma,\xi \}= \frac{2(n+1)}{n}$ for $n\geq 4$, combining with \eqref{Lem:L_xL^1_t:Proof4}, \eqref{Lem:L_xL^1_t:Proof5} and \eqref{Lem:L_xL^1_t:Proof6}, we have
	\begin{align*}
		\|\widetilde{W}^{2^{k}}\|_{L^\varsigma(Q_T)}^{\frac{\sigma}{\sigma-1}} &+ \|\widetilde{W}^{2^{k}}\|_{L^\varsigma(M_T)}^{\frac{\sigma}{\sigma-1}} +  \|\widetilde{Z}^{2^{k}}\|_{L^\varsigma(M_T)}^{\frac{\sigma}{\sigma-1}} \\
		&\leq C_{T} \Big(\|\widetilde{W}^{2^{k}}\|^{2}_{L^\infty(0,T;L^2(\Omega))} + \|\widetilde{W}^{2^{k}}\|^{2}_{L^2(0,T;H^1(\Omega))} +1\Big) \\
		&\quad + C_{T}(\|\widetilde{Z}^{2^{k}}\|^{2}_{L^\infty(0,T;L^2(\M))} + \|\widetilde{Z}^{2^{k}}\|^{2}_{L^2(0,T;H^1(\M))} ).
	\end{align*}
	where we used $1+\frac{n}{n+2} < \frac{\sigma}{\sigma-1}<2$ due to $2<\sigma < 2+ \frac{2}{n} $.
	Combining with the inequality \eqref{Lem:L_xL^1_t:Proof3}, we have
	\begin{align*}
		\begin{split}
			&\|\widetilde{W}^{2^{k}}\|_{L^\varsigma(Q_T)}^{\frac{\sigma}{\sigma-1}} + \|\widetilde{W}^{2^{k}}\|_{L^\varsigma(M_T)}^{\frac{\sigma}{\sigma-1}} +  \|\widetilde{Z}^{2^{k}}\|_{L^\varsigma(M_T)}^{\frac{\sigma}{\sigma-1}}  
			\leq \hat{C}_k  \|\widetilde{W}^{2^{k}}\|_{L^1(Q_T)}   + \hat{C}_k \|\widetilde{Z}^{2^{k}}\|_{L^1(Q_T)}  +   C_T.
		\end{split}
	\end{align*}
	Therefore, we have  
	\begin{align*}
		\begin{split} 
			&\Big(\|\widetilde{W}\|_{L^{2^{k}\varsigma}(Q_T)} +  \|\widetilde{W}\|_{L^{2^{k}\varsigma}(M_T)} + \|\widetilde{Z}\|_{L^{2^{k}\varsigma}(M_T)} \Big)^{2^{k}\frac{\sigma}{\sigma-1}} \\
			&\leq  \hat{C}_k \Big(\|\widetilde{W}\|^{2^{k}}_{L^{2^{k}}(Q_T)}  + \|\widetilde{W}\|^{2^{k}}_{L^{2^{k}}(M_T)} + \|\widetilde{Z}\|^{2^{k}}_{L^{2^{k}}(M_T)}\Big)  + C_T\\
			&\leq  \hat{C}_k \Big(\|\widetilde{W}\|_{L^{2^{k-1}\varsigma}(Q_T)}^{2^{k}}  + \|\widetilde{W}\|_{L^{2^{k-1}\varsigma}(M_T)}^{2^{k}} + \|\widetilde{Z}\|_{L^{2^{k-1}\varsigma}(M_T)}^{2^{k}} \Big)   + C_T.
		\end{split}
	\end{align*}
	Taking the root of order $2^k\frac{\sigma}{\sigma-1}$ and denoting $Q_k:= \|\widetilde{W}\|_{L^{2^{k}\varsigma}(Q_T)} +  \|\widetilde{W}\|_{L^{2^{k}\varsigma}(M_T)} + \|\widetilde{Z}\|_{L^{2^{k}\varsigma}(M_T)}$, we have
	\begin{align*}
		%		\begin{split}
			Q_k  \leq  \Big(\hat{C}_k Q_{k-1}^{2^k}    + C_T \Big)^{\frac{\sigma-1}{2^k\sigma}} \leq  \Big(\hat{C}_k Q_{k-1}^{2^k}    + C_T \Big)^{\frac{1}{2^k}}.
			%		\end{split}
	\end{align*} 
	Therefore,  we have
	\begin{align*} 
		&\max\{Q_k,  1\}  \le \max\{Q_{k-1}, 1\} C_T^{\frac{1}{2^k}} \hat{C}_k^{\frac{1}{2^k}}.
	\end{align*}
	Thus, by $\hat{C}_k \sim 2^{2(k-1)}$
	\begin{equation*} 
		\begin{aligned}
			\max\{Q_k, 1\}  
			&\le \max\{Q_0, 1\} \prod_{k=1}^{\infty}C_T^{\frac{1}{2^k}} \hat{C}_k^{\frac{1}{2^k}} 
			\le C_{T} C_T^{\sum_{k\ge 1}(1/2^k)}(2^{2})^{\sum_{k\ge 1}(k/2^k)}
			\le C_{T}. 
		\end{aligned}
	\end{equation*} 
	Letting $k\to \infty$, we obtain finally the estimate
	\begin{align*} 
		\|\widetilde{W}\|_{\LQM{\infty}}\leq C_T.
	\end{align*}
	From this and  \ref{Lem:L_xL^1_t:Proof:infty}, we obtain, thanks to non-negativity of solutions, the desired bounds
	\begin{align*}
		\Big\|\int^t_0 u_i \Big\|_{\LQ{\infty}} +  \Big\|\int^t_0  u_i \Big\|_{\LQM{\infty}}  + \Big\|\int^t_0   v_j \Big\|_{\LQM{\infty}} \leq C_T
	\end{align*}
	for all $i=1,\ldots, m_1$ and $j=1,\ldots, m_2$.  
\end{proof}

\subsection{Duality method} 
We start with some useful estimates which are proved in \cite{morgan2023global}.
\begin{lemma}[\cite{morgan2023global}, Lemma 2.4]
	Moreover, for any positive integer $p\geq 2$ and any constant $\eps>0$, there exists $C_{p,\eps,T}>0$ such that  
	\begin{equation}\label{Lem:L^p:energy:State1}
		\sumii \left(\norm{u_i}_{\LQ{p-1+\rO}}^{p-1+\rO} + \norm{u_i}_{\LS{p-1+\rM}}^{p-1+\rM} \right)\leq C_{p,\eps,T}  + \eps\sumjj \norm{v_j}_{\LS{p-1+\rM}}^{p-1+\rM}.
	\end{equation}
	%for a possibly different constant $K_{p,\eps}$.  
	Consequently, for any $1<p<\infty$ and any $\eps>0$, there exists a constant $C_{p,\eps,T}>0$ such that
	\begin{align}\label{Lem:L^p:energy:State2}
		\sumii  \left(\|u_i\|_{\LQ{p}}+\|u_i\|_{\LS{p}}\right)\leq C_{p,\eps,T} + \eps\sumjj \|v_j\|_{\LS{p}}.
	\end{align} 
\end{lemma}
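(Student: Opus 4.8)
The plan is to establish \eqref{Lem:L^p:energy:State1} by an $L^p$-energy argument exploiting the lower-triangular structure of the matrix $A$ in \eqref{A4}, and then to obtain \eqref{Lem:L^p:energy:State2} by a short interpolation against the $L^1$-bounds of Lemma \ref{Lem:L^p:energy}. The main inputs will be the $L^\infty(0,T;\LO{1})$ and $\LS{1}$ bounds on $u_i$ from Lemma \ref{Lem:L^p:energy}, the interpolation inequalities of Lemmas \ref{Lem:interpolation:Omega}, \ref{Lem:M} and \ref{Lem:interpolation:M}, and Gagliardo--Nirenberg together with Gronwall; the growth conditions \eqref{Condition:Growth} are tailored so that exactly the needed interpolation exponents are admissible.

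For \eqref{Lem:L^p:energy:State1}, fix an integer $p\ge2$. Testing the $i$-th volume equation of \eqref{System} against $p\,u_i^{p-1}$ and integrating by parts gives, for each $i$,
\begin{align*}
	\frac{d}{dt}\intO u_i^p + c_p\intO\big|\nabla(u_i^{p/2})\big|^2 \le p\intO F_i(u)\,u_i^{p-1} + p\intOM G_i(u,v)\,u_i^{p-1}.
\end{align*}
The point of \eqref{A4} is that a suitable nonnegative combination of these inequalities over $i$ makes the reaction terms collapse: the volume part is controlled, via the $F$-inequality, by a constant times $\intQ u_i^{p-1+\rO}$ up to lower-order terms carried by the already-treated lower-index components; and the boundary part is controlled, via the $G$-inequality, by $\intQM u_i^{p-1+\rM}$ plus a term of the form $\intQM(|v|^{\rM}+1)(\cdots)$. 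Next, Lemma \ref{Lem:M} converts $\intQM u_i^{p-1+\rM}$ into $\|u_i^{p/2}\|^2_{L^\infty(0,T;L^2(\Omega))}+\|u_i^{p/2}\|^2_{L^2(0,T;H^1(\Omega))}$ --- which is where $\rM<1+\frac1n$ enters, to keep $p-1+\rM$ below the admissible trace exponent for $u_i^{p/2}$ --- and Lemma \ref{Lem:interpolation:Omega} does the analogue for $\intQ u_i^{p-1+\rO}$ using $\rO<1+\frac2n$. These norms of $u_i^{p/2}$ are then absorbed back into the left-hand side by a Gagliardo--Nirenberg splitting, leaving only the $L^1$-data and a term $\eps\sumjj\|v_j\|_{\LS{p-1+\rM}}^{p-1+\rM}$ with $\eps$ as small as one wishes (Young's inequality), since the $|v|^{\rM}$ contribution cannot be re-expressed through the $u$-unknowns. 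A Gronwall step then yields \eqref{Lem:L^p:energy:State1}; the trace norm $\|u_i\|_{\LS{p-1+\rM}}$ appearing on its left-hand side follows from the same interpolation (Lemma \ref{Lem:M}) applied to the estimates just obtained.

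For \eqref{Lem:L^p:energy:State2}, given $1<p<\infty$, pick an integer $P$ with $P-1+\rO>p$ and $P-1+\rM>p$, apply \eqref{Lem:L^p:energy:State1} with $P$, and interpolate the $L^p$-norms of $u_i$ between the $L^1$-bounds of Lemma \ref{Lem:L^p:energy} and the $L^{P-1+\rO}(Q_T)$, $L^{P-1+\rM}(M_T)$ bounds; interpolating the $v_j$-terms between their $L^1$-bound and their $L^{P-1+\rM}(M_T)$-norm, Young's inequality again keeps their contribution with an arbitrarily small coefficient, which is the claimed form.

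The step I expect to be the main obstacle is the exponent bookkeeping in the presence of the volume--surface coupling: one must check that the polynomial degrees produced by \eqref{A4} after testing by $u_i^{p-1}$ never exceed the critical thresholds of the interpolation lemmas, uniformly in $p$, and that the lower-triangular combination indeed cancels every reaction contribution except the top-degree one. In particular, isolating the $|v|^{\rM}$ surface term so that its coefficient can be made arbitrarily small while everything else is swallowed by the dissipation or by lower-index components is the delicate point; the residual $v$-dependence it leaves is exactly what makes \eqref{Lem:L^p:energy:State1}--\eqref{Lem:L^p:energy:State2} conditional estimates, to be closed later via the duality argument and Lemma \ref{Lem:key}.
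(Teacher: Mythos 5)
This lemma is not proved in the present paper; it is cited verbatim from \cite{morgan2023global}. The actual argument there is the $L^p$-energy method, and your sketch, while it correctly identifies the inputs (the interpolation lemmas, the $L^1$-bounds, a Gronwall step), proposes a different and, as written, non-functional mechanism to exploit \eqref{A4}.

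The gap is in the sentence ``a suitable nonnegative combination of these inequalities over $i$ makes the reaction terms collapse.'' After testing the $i$-th volume equation against $p\,u_i^{p-1}$, the reaction contribution is $p\intO F_i(u)\,u_i^{p-1}$, and the test weight $u_i^{p-1}$ depends on $i$. A linear combination $\sumii c_i \intO F_i(u)\,u_i^{p-1}$ therefore has \emph{no common factor}, so it cannot be rewritten in the form $\intO \big(\sumii a_{ki}F_i(u)\big)\cdot(\text{common weight})$ to which \eqref{A4} applies. Concretely, already with two volume species and $A$ lower triangular, row $2$ gives an upper bound on $a_{21}F_1+a_{22}F_2$ but nothing on the mismatched term $\intO F_1(u)\,u_2^{p-1}$, and quasi-positivity gives no lower bound on $F_1$ away from $\{u_1=0\}$, so this term cannot be absorbed. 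The triangular cancellation in \eqref{A4} is simply invisible to the functional $\sumii c_i\|u_i\|_{\LO{p}}^p$.

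What the proof in \cite{morgan2023global} actually does is work with the $L^p$-energy functional
\begin{equation*}
	\mathscr{L}_p[u](t)=\intO \sum_{\beta\in\mathbb Z_+^{m_1},\,|\beta|=p}\binom{p}{\beta}\,\theta^{\beta}\,u^{\beta}\,dx,
	\qquad u^{\beta}=\prod_{i=1}^{m_1}u_i^{\beta_i},\ \ \theta^{\beta}=\prod_{i=1}^{m_1}\theta_i^{\beta_i},
\end{equation*}
with a corresponding surface functional for the $v_j$'s. Differentiating in time and reindexing $\gamma=\beta-e_i$ (so $|\gamma|=p-1$) uses the combinatorial identity $(\gamma_i+1)\binom{p}{\gamma+e_i}=p\binom{p-1}{\gamma}$ to produce the reaction contribution
\begin{equation*}
	p\intO \sum_{|\gamma|=p-1}\binom{p-1}{\gamma}\,\theta^{\gamma}\,u^{\gamma}\;\Big(\sumii \theta_i F_i(u)\Big)\,dx,
\end{equation*}
in which all $F_i$'s now carry the \emph{same} weight $u^{\gamma}$. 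The $\theta_i$'s are then chosen recursively, exploiting the lower-triangular structure of $A$, so that $\sumii\theta_i F_i(u)$ is controlled by the polynomial bound of order $\rO$; the analogous surface computation handles $G_i,H_j$ with $\rM$ and $\mM$. Only after this cancellation do the interpolation lemmas and Young enter, exactly as you indicate, to absorb the top-degree terms into the dissipation $\sum\intO|\nabla(u^{\gamma/2+\dots})|^2$ and to peel off an $\eps\sumjj\|v_j\|^{p-1+\rM}_{\LS{p-1+\rM}}$ remainder. Your second paragraph (interpolation to pass from \eqref{Lem:L^p:energy:State1} to \eqref{Lem:L^p:energy:State2}) is fine, but the first half needs the multi-index energy functional rather than a weighted sum of individual $L^p$-norms to make \eqref{A4} actionable.
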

\begin{lemma}[\cite{morgan2023global}, Lemma 3.2]\label{Lem:DualProblem}
	Assume that $0\leq\tau<T$, $1<p<\infty$  and $ \theta_{\M}\in L^{p}(\M_{\tau,T})$. Let $\Psi_j $ be the solution to 
	\begin{align}\label{System_Dual}
		\begin{cases}
			\pa_t \Psi_j  + \Delta \Psi_j  = 0, & \text{in } Q_{\tau,T}\\
			\pa_t \Psi_j  + \delta_j \Delta_{\M}\Psi_j  = - \theta_{\M}, & \text{on } \M_{\tau,T}\\ 
			\Psi_j (x,T) = 0, & \text{in } \overline{\Omega}.
		\end{cases}
	\end{align}  
	Then, we have the estimate
	\begin{align*}
		\|\Psi_j\|_{W^{2,1}_p (\M_{\tau,T})} + \||\Psi_j |_{L^{\infty}(\tau,T)}\|_{\LOM{p}} \leq C_{T-\tau} \| \theta_{\M}\|_{L^{p}(\M_{\tau,T})} 
	\end{align*}
	and 
	\begin{align*}
		\|\Psi_j \|_{W^{2,1}_{p+\xi} (Q_{\tau,T})}& + \|\pa_{\nu} \Psi_j \|_{L^{p+ \xi}(Q_{\tau,T})} + \||\Psi_j |_{L^{\infty}(\tau,T)}\|_{\LO{p}}  \leq C_{T-\tau} \| \theta_{\M}\|_{L^{p}(\M_{\tau,T})},
	\end{align*}	
	where $\xi\leq \frac{p}{n+1}$. 
	Consequently, 
	\begin{align*} 
		\|\phi\|_{L^{q^{\dag}}(Q_{\tau,T})} + \|\phi\|_{L^{q^*}(\M_{\tau,T})} \leq C_{T-\tau}\|\psi\|_{L^{p}(\M_{\tau,T})}
	\end{align*}
	where
	\begin{align*} 
		q^{\dag} = 
		\begin{cases}
			\frac{(n+2)p}{n+2-2p} &\text{ if } p < \frac{n+1}{2},\\
			<+\infty \text{ abitrary } &\text{ if } p = \frac{n+1}{2},\\
			+\infty &\text{ if } p > \frac{n+1}{2}.
		\end{cases}\quad
		\text{ and }\quad
		q^* = \begin{cases}
			\frac{(n+1)p}{n+1-2p} &\text{ if } p < \frac{n+1}{2},\\
			<+\infty \text{ abitrary } &\text{ if } p= \frac{n+1}{2},\\
			+\infty &\text{ if } p > \frac{n+1}{2}.
		\end{cases}
	\end{align*} 
	Moreover, if $\theta_{\M} \geq 0$ a.e. in $M_{\tau,T}$, then $\Psi_j\geq 0$. 
\end{lemma}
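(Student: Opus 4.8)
The plan is to exploit a \emph{triangular} structure hidden in the dual system \eqref{System_Dual}: the boundary condition $\pa_t\Psi_j+\delta_j\Delta_\M\Psi_j=-\theta_\M$ involves only the trace $w:=\Psi_j|_\M$ and its tangential derivatives, so it is a \emph{closed} backward parabolic equation for $w$ on the compact manifold $\M$, entirely decoupled from the bulk. Hence one first solves on $\M$ for $w$, then solves the bulk backward heat equation $\pa_t\Psi_j+\Delta\Psi_j=0$ in $Q_{\tau,T}$ with Dirichlet datum $\Psi_j=w$ on $\M_{\tau,T}$ and $\Psi_j(\cdot,T)=0$; this construction yields existence and, by the triangular structure, uniqueness of $\Psi_j$. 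For the non-negativity claim, after the time reversal $s=T-t$ both steps become forward problems with zero initial data: the surface equation has source $\theta_\M\ge 0$, so $w\geq 0$ by the maximum principle on $\M$, and then the bulk heat equation has non-negative Dirichlet datum $w$ and zero initial data, so $\Psi_j\geq 0$ by the maximum principle in $\Omega$.

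For the surface estimate, I would invoke maximal $L^p$-regularity for the Laplace--Beltrami operator on the compact manifold $\M$ (the operator $-\delta_j\Delta_\M$ admits a bounded $H^\infty$-calculus on $L^p(\M)$ for $1<p<\infty$): the time-reversed trace solves $\pa_s w-\delta_j\Delta_\M w=\theta_\M$ with $w(0)=0$, hence $w\in W^{2,1}_p(\M_{\tau,T})$ with $\|w\|_{W^{2,1}_p(\M_{\tau,T})}\leq C_{T-\tau}\|\theta_\M\|_{L^p(\M_{\tau,T})}$; combined with the embedding $W^{2,1}_p(\M_{\tau,T})\hookrightarrow C([\tau,T];L^p(\M))$ (and $w(\cdot,\tau)=0$) this gives the $L^\infty_tL^p_x$ term on $\M$.

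For the bulk estimate the key is to feed $w$ into the heat equation in $\Omega$ at a slightly improved integrability. By the anisotropic Sobolev embedding on $\M_{\tau,T}$, whose parabolic dimension is $n+1$, the space $W^{2,1}_p(\M_{\tau,T})$ embeds into the parabolic boundary trace space $W^{2-1/q,1-1/(2q)}_q(\M_{\tau,T})$ precisely when $\tfrac{n+1}{p}\le\tfrac{n+2}{q}$, i.e.\ $q=p+\xi$ with $\xi\le\tfrac{p}{n+1}$; moreover the compatibility condition $w(\cdot,\tau)=0$ is satisfied. Classical maximal regularity for the heat equation with inhomogeneous Dirichlet data then yields $\Psi_j\in W^{2,1}_{p+\xi}(Q_{\tau,T})$ with norm bounded by $C_{T-\tau}\|w\|_{W^{2,1}_p(\M_{\tau,T})}\le C_{T-\tau}\|\theta_\M\|_{L^p(\M_{\tau,T})}$. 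Consequently $\nabla\Psi_j\in W^{1,1/2}_{p+\xi}(Q_{\tau,T})$, so its normal trace $\pa_\nu\Psi_j$ lies in $L^{p+\xi}$ over $\M_{\tau,T}$ with the same bound, and $W^{2,1}_{p+\xi}(Q_{\tau,T})\hookrightarrow C([\tau,T];L^{p+\xi}(\Omega))\hookrightarrow C([\tau,T];L^p(\Omega))$ gives the remaining term.

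Finally, the ``consequently'' estimates follow from Sobolev embeddings applied to the two regularity statements: $W^{2,1}_p(\M_{\tau,T})$ (parabolic dimension $n+1$) embeds into $L^{q^*}(\M_{\tau,T})$, and $W^{2,1}_{p+\xi}(Q_{\tau,T})$ (parabolic dimension $n+2$, taking $\xi\uparrow\tfrac{p}{n+1}$) embeds into $L^{q^\dagger}(Q_{\tau,T})$, with the displayed critical exponents $q^\dagger,q^*$, and into $L^\infty$ (indeed Hölder) in the supercritical range $p>\tfrac{n+1}{2}$. The \textbf{main obstacle} is the bookkeeping in the bulk step: pinning down the correct parabolic trace space, verifying the compatibility at $t=\tau$ so the inhomogeneous Dirichlet problem is solvable at the claimed regularity, and carrying out the anisotropic Sobolev embedding sharply enough to reach exactly the threshold $\xi\le\tfrac{p}{n+1}$. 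Everything else reduces to standard maximal $L^p$-regularity and parabolic maximum principles.
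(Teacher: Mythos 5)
Your reconstruction of the triangular structure of the dual system (solve the closed surface equation for the trace $w$ first, then fill the bulk by a backward heat equation with Dirichlet data $w$) is correct, and the bookkeeping for maximal $L^p$-regularity, the parabolic Dirichlet trace space $W^{2-1/q,1-1/(2q)}_q(\M_{\tau,T})$, and the threshold $\xi\le\frac{p}{n+1}$ are all right. The paper largely defers to \cite{morgan2023global} for these parts and only sketches the remaining items, so there is no full paper proof to compare against; but your set-up is consistent with it.

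There is, however, a genuine gap in the estimate you give for $\||\Psi_j|_{L^\infty(\tau,T)}\|_{\LOM{p}}$ and $\||\Psi_j|_{L^\infty(\tau,T)}\|_{\LO{p}}$. You appeal to the embedding $W^{2,1}_p(\M_{\tau,T})\hookrightarrow C([\tau,T];L^p(\M))$, which gives $\sup_{t}\|\Psi_j(t)\|_{L^p(\M)}$; but the lemma requires the \emph{stronger} mixed norm $\big\|\,|\Psi_j(x,\cdot)|_{L^\infty(\tau,T)}\,\big\|_{L^p_x(\M)}$, i.e.\ $L^p_x L^\infty_t$, not $L^\infty_t L^p_x$ (these differ and the former dominates by Minkowski's integral inequality). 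The distinction matters: in the proof of Lemma \ref{Lem:Dual:v} (estimates of $I_{12}$ and $I_3$) one first integrates in time for each fixed $x$ and only afterwards applies H\"older in $x$, which is possible precisely because the bound is on $\||\Psi_j|_{L^\infty(\tau,T)}\|_{L^p_x}$. The correct argument is the one the paper actually supplies in its short proof: for a.e.\ $x$ one has $\Psi_j(x,\cdot),\ \pa_t\Psi_j(x,\cdot)\in L^p(\tau,T)$, so the one-dimensional Sobolev embedding $W^{1,p}(\tau,T)\hookrightarrow L^\infty(\tau,T)$ (valid since $p>1$) gives $|\Psi_j(x,\cdot)|_{L^\infty(\tau,T)}\le C_{T-\tau}\,|\Psi_j(x,\cdot)|_{W^{1,p}(\tau,T)}$ pointwise in $x$; taking the $L^p_x$ norm of both sides and bounding the right-hand side by $\|\Psi_j\|_{W^{2,1}_p(\M_{\tau,T})}$ (resp.\ $\|\Psi_j\|_{W^{2,1}_{p+\xi}(Q_{\tau,T})}$) yields the stated estimates. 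With that replacement your proof is sound. A minor observation: with $\xi\uparrow\frac{p}{n+1}$ your bulk embedding actually yields the larger exponent $q^\dagger=\frac{(n+2)p}{n+1-2p}$, whereas the lemma states the more conservative $\frac{(n+2)p}{n+2-2p}$ (which already follows from $W^{2,1}_p(Q_{\tau,T})$); this is not an error, just worth noticing.
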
  

\begin{proof}
	The detailed proof can be found in \cite{morgan2023global}. Here, we just give the prove of 
	\begin{align*}
		\||\Psi_j |_{L^{\infty}(\tau,T)}\|_{\LOM{p}} \leq C_{T-\tau} \| \theta_{\M}\|_{L^{p}(\M_{\tau,T})} ,  \quad\text{and}\quad
		\||\Psi_j |_{L^{\infty}(\tau,T)}\|_{\LO{p}}  \leq C_{T-\tau} \| \theta_{\M}\|_{L^{p}(\M_{\tau,T})}.
	\end{align*}
	Indeed, if we apply the Sobolev embedding theorem, then there exists $C_{T-\tau}>0$ ($\tau\geq0$), such that
	\begin{align*}
		|\Psi_j |_{L^{\infty}(\tau,T)} \leq C_{T-\tau} |\Psi_j |_{W^{1,p}(\tau,T)}.
	\end{align*}
	Thus,
	\begin{align*}
		\||\Psi_j |_{L^{\infty}(\tau,T)}\|_{\LOM{p}} \leq  C_{T-\tau} \|\Psi_j\|_{W^{2,1}_p (\M_{\tau,T})} \leq C_{T-\tau} \| \theta_{\M}\|_{L^{p}(\M_{\tau,T})}. 
	\end{align*}
	Similar, we have 
	\begin{align*} 
		\||\Psi_j |_{L^{\infty}(\tau,T)}\|_{\LO{p}}  \leq C_{T-\tau} \| \theta_{\M}\|_{L^{p}(\M_{\tau,T})}.
	\end{align*}
\end{proof}

Next, by using the condition \eqref{A4}, Lemma \ref{Lem:L_xL^1_t} and the duality method, we are able to show the following Lemma, which plays a crucial role in the proof of the global existence.
\begin{lemma}\label{Lem:Dual:v}
	%Assume $\|\theta\|_{\LQ{p}} = 1$ and $\| \theta_{\M}\|_{\LQM{p}} = 1$.
	Assume \eqref{A1}, \eqref{A2}, \eqref{A3} and \eqref{A4} with \eqref{Condition:Growth}. For any $j\in \{1,\cdots,m_2\}$, it holds that
	\begin{align}\label{Lem:Dual:v:State}
		\begin{split}
			\|v_j\|_{\LQM{q}}       
			\leq C &+ \eps\Big( \sumii\|u_i\|_{\LQ{q}}  + \sumii \|u_i\|_{\LQM{q}} + \sumjj \|v_j\|_{\LQM{q}}\Big)    \\
			&+ C \sum_{k=1}^{j-1} \|v_k\|_{\LQM{q}},
		\end{split}
	\end{align} 
	where the last sum is neglected if $j=1$.
\end{lemma}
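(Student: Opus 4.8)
\emph{Set-up.} The plan is a duality argument against the surface dual problem \eqref{System_Dual}: bound $\|v_j\|_{\LQM{q}}$ by testing the $v_j$-equation against the dual solution, reduce to an integral of $H_j$ against that solution, and then exploit the intermediate sum condition \eqref{A4} together with the time-integrated bounds of Lemma \ref{Lem:L_xL^1_t}. Fix $j$ and (at first) a sufficiently large $q$, set $q'=q/(q-1)$, and let $q^{*},q^{\dagger}$ be the exponents of Lemma \ref{Lem:DualProblem} with $p=q'$. Since $v_j\ge 0$ it suffices to estimate $\int_{M_T}v_j\,\theta_M$ uniformly over all $0\le\theta_M\in L^{q'}(M_T)$ with $\|\theta_M\|_{L^{q'}(M_T)}\le 1$. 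For such a datum let $\Psi_j$ solve \eqref{System_Dual} with $\tau=0$; by Lemma \ref{Lem:DualProblem}, $\Psi_j\ge 0$ and, uniformly in $\theta_M$, $\Psi_j$ is bounded in $W^{2,1}_{q'}(M_T)\cap W^{2,1}_{q'+\xi}(Q_T)\cap L^{q^{\dagger}}(Q_T)\cap L^{q^{*}}(M_T)$, $\partial_\nu\Psi_j\in L^{q'+\xi}(M_T)$, and $\||\Psi_j|_{L^\infty(0,T)}\|_{L^{q'}(\Omega)}+\||\Psi_j|_{L^\infty(0,T)}\|_{L^{q'}(M)}\le C$. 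Testing the $v_j$-equation of \eqref{System} against $\Psi_j$, integrating by parts in time (using $\Psi_j(T)=0$) and twice on the closed manifold $M$, and inserting the dual boundary relation $\partial_t\Psi_j+\delta_j\Delta_{M}\Psi_j=-\theta_M$, yields
\[
\int_{M_T}v_j\,\theta_M=\int_{M_T}H_j(u,v)\,\Psi_j+\int_M v_{j0}\,\Psi_j(0),
\]
and the last term is $\le C$ since $v_{j0}$ is bounded and $\|\Psi_j(0)\|_{L^{q'}(M)}\le C$.

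\emph{Unfolding the cancellations.} The $(m_1+j)$-th row of \eqref{A4} gives $a>0$ and $b_i,c_k\ge0$ with $aH_j+\sum_i b_iG_i+\sum_{k<j}c_kH_k\le K_1(|u|^{\mu_M}+|v|^{\mu_M}+1)$ on $M_T$ and $\sum_i b_iF_i\le K_1(\sum_i u_i^{\rO}+1)$ on $Q_T$; multiplying by $\Psi_j\ge0$ bounds $a\int_{M_T}H_j\Psi_j$ by $K_1\int_{M_T}(|u|^{\mu_M}+|v|^{\mu_M}+1)\Psi_j-\sum_i b_i\int_{M_T}G_i\Psi_j-\sum_{k<j}c_k\int_{M_T}H_k\Psi_j$. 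The two cancellation sums are unfolded by testing the corresponding equations of \eqref{System} against the \emph{same} $\Psi_j$; for the $u_i$-equation the decisive point is to eliminate the uncontrollable term $\int_{Q_T}u_i\partial_t\Psi_j$ by means of the dual interior equation $\partial_t\Psi_j=-\Delta\Psi_j$, which leads to
\[
\int_{M_T}G_i\Psi_j=(1-d_i)\int_{Q_T}u_i\Delta\Psi_j+d_i\int_{M_T}u_i\,\partial_\nu\Psi_j-\int_{Q_T}F_i\Psi_j-\int_\Omega u_{i0}\,\Psi_j(0),
\]
while the $v_k$-equation ($k<j$), together with $\partial_t\Psi_j=-\delta_j\Delta_{M}\Psi_j-\theta_M$ on $M$, gives $\int_{M_T}H_k\Psi_j=(\delta_j-\delta_k)\int_{M_T}v_k\Delta_{M}\Psi_j+\int_{M_T}v_k\theta_M-\int_M v_{k0}\Psi_j(0)$. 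The terms linear in $u_i$ or $v_k$ — $\int_{Q_T}u_i\Delta\Psi_j$, $\int_{M_T}u_i\partial_\nu\Psi_j$, $\int_{M_T}v_k\Delta_{M}\Psi_j$, $\int_{M_T}v_k\theta_M$ — are handled by Hölder against $\Delta\Psi_j\in L^{q'+\xi}(Q_T)$, $\partial_\nu\Psi_j\in L^{q'+\xi}(M_T)$ and $\Delta_{M}\Psi_j,\theta_M\in L^{q'}(M_T)$, plus (since $(q'+\xi)'<q$) interpolation against the $L^1$-bounds of Lemma \ref{Lem:L^p:energy} and Young's inequality; they contribute $\eps(\|u_i\|_{\LQ{q}}+\|u_i\|_{\LQM{q}})+C_\eps$ for the $u_i$-terms and $C\|v_k\|_{\LQM{q}}$ for the $v_k$-terms — the latter with no small constant available, which is exactly the origin of the $C\sum_{k<j}\|v_k\|_{\LQM{q}}$ in \eqref{Lem:Dual:v:State} and the precise structure absorbed by Lemma \ref{Lem:key}. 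The initial-data terms are $\le C$.

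\emph{The crux: the super-linear terms.} What remains are $\int_{Q_T}u_i^{\rO}\Psi_j$ (from $\sum_i b_i\int_{Q_T}F_i\Psi_j$) and $\int_{M_T}(|u|^{\mu_M}+|v|^{\mu_M})\Psi_j$ (the $+1$ parts being trivially $\le C$), and here lie both the difficulty and the novelty. Hölder against $\Psi_j\in L^{q^{\dagger}}(Q_T)$, resp.\ $L^{q^{*}}(M_T)$, reduces them to powers of $\|u_i\|_{L^{\rO(q^{\dagger})'}(Q_T)}$ and $\|v\|_{L^{\mu_M(q^{*})'}(M_T)}$; but a pure interpolation of these between $L^1$ and $L^q$ only closes for $\rO<1+\tfrac{2}{n+2}$ and for $\mu_M$ below the dimension-dependent threshold of \cite{morgan2023global,augner2024analysis} — once $\mu_M$ approaches $2$ the surviving power of $\|v\|_{\LQM{q}}$ reaches or exceeds $1$ and cannot be absorbed. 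I would resolve this in the spirit of \cite{morgan1989global,morgan1990boundedness}: invoke the time-integrated $L^\infty$-bounds of Lemma \ref{Lem:L_xL^1_t}, $\Big\|\int^t_0 u_i\,ds\Big\|_{\LQ{\infty}},\Big\|\int^t_0 u_i\,ds\Big\|_{\LQM{\infty}},\Big\|\int^t_0 v_j\,ds\Big\|_{\LQM{\infty}}\le C_T$, split off one power of the solution and trade it against the $L^\infty$-bound on its time primitive (an extra time-derivative's worth of integrability); since $\rO-1<\tfrac2n<1$ and $\mu_M-1<1$, the residual power of the solution — in an $L^s$-norm with $s<q$ — then carries an exponent strictly below $1$ and is absorbed by Young's inequality into $\eps\big(\sum_i\|u_i\|_{\LQ{q}}+\sum_i\|u_i\|_{\LQM{q}}+\sum_j\|v_j\|_{\LQM{q}}\big)+C_\eps$; the surface term $\int_{M_T}|u|^{\mu_M}\Psi_j$ is treated the same way after first passing from $M_T$ to $Q_T$ via the trace interpolations of Lemmas \ref{Lem:M}, \ref{Lem:interpolation:Omega}, \ref{Lem:interpolation:M}. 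Collecting all contributions and taking the supremum over admissible $\theta_M$ gives \eqref{Lem:Dual:v:State} with $\eps>0$ arbitrary and $C=C_\eps$; for the remaining smaller values of $q$ the estimate follows a fortiori since $M_T$ has finite measure. Apart from this last step the proof is a careful bookkeeping of parabolic-regularity exponents under $\rO<1+\tfrac2n$, $\rM<1+\tfrac1n$, $\mu_M<2$; the hard part is precisely making the sub-quadratic surface nonlinearity ($\mu_M<2$, dimensionally free) compatible with the duality estimate, which is where Lemma \ref{Lem:L_xL^1_t} is indispensable.
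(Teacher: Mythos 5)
Your proposal follows the paper's own route: duality against \eqref{System_Dual}, unfold $H_j$ via the $(m_1+j)$-th row of \eqref{A4}, push the cancelled $\int_{M_T}G_i\Psi_j$ and $\int_{M_T}H_k\Psi_j$ back through the respective equations of \eqref{System} and the dual equations (killing the uncontrollable $\int u\,\partial_t\Psi_j$), and handle the surviving superlinear residues with the time-integrated $L^\infty$ bounds of Lemma~\ref{Lem:L_xL^1_t} plus Young. The linear bookkeeping, the origin of the non-small coefficient $C\sum_{k<j}\|v_k\|$, and the role of Lemma~\ref{Lem:key} downstream are all correctly identified.

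One detail is off and would break if taken literally. For the surface term $\int_{M_T}(|u|^{\mu_M}+|v|^{\mu_M})\Psi_j$ you propose ``passing from $M_T$ to $Q_T$ via the trace interpolations of Lemmas~\ref{Lem:M}, \ref{Lem:interpolation:Omega}, \ref{Lem:interpolation:M},'' but those lemmas relate boundary norms to interior $H^1$-norms of the same function, and you have no control of $\nabla\bigl(u_i^{\mu_M}\Psi_j\bigr)$; they also play no role in this lemma (they are used in proving Lemma~\ref{Lem:L_xL^1_t}). No passage to $Q_T$ is needed: Lemma~\ref{Lem:DualProblem} already provides $\bigl\||\Psi_j|_{L^\infty(0,T)}\bigr\|_{L^p(M)}\le C\|\theta_M\|_{L^p(M_T)}$. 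Pull $|\Psi_j|_{L^\infty(0,T)}$ out in $L^p(M)$, write $u_i^{\mu_M}=u_i^{\mu_M-k}u_i^k$ with $k=p-\mu_M(p-1)\in(0,1)$, H\"older in time to split off $\bigl(\int_0^T u_i\,ds\bigr)^k$, and bound that by $\bigl\|\int_0^t u_i\,ds\bigr\|_{L^\infty(M_T)}^k\le C_T$ from Lemma~\ref{Lem:L_xL^1_t}; with $\frac{\mu_M-k}{1-k}=q$ the remaining factor is a power of $\|u_i\|_{L^q(M_T)}$ strictly below one once $p$ is chosen so that $p(\mu_M-1)<1$, which is possible precisely because $\mu_M<2$. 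The interior term $\int_{Q_T}u_i^{\rO}\Psi_j$ is treated identically with $\bigl\||\Psi_j|_{L^\infty(0,T)}\bigr\|_{L^p(\Omega)}$ and $\bigl\|\int_0^t u_i\bigr\|_{L^\infty(Q_T)}$ — not with $\Psi_j\in L^{q^\dagger}(Q_T)$, as your sketch hints. Your phrase ``split off one power and trade it against the time primitive'' captures the idea, but the split is by a fractional power $k$, and this exponent bookkeeping is exactly what renders $\mu_M<2$ dimension-free, so it should be spelled out.
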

\begin{remark}
	%For the proof, we may focus on $\rO \leq \mu_{\M} <2$. 
	Since $1+\frac{2}{n} < 1+ \frac{4}{n+1}$ for $n\geq 4$, the result has been obtained in \cite{morgan2023global} if $1\leq \mu_{\M} \leq \rO<1+\frac{2}{n}$. Therefore, we may focus on $\rO \leq \mu_{\M} <2$. 
\end{remark}

\begin{proof} 
	Let $1<p<\infty$ be chosen later, $q = p/(p-1)$ be the H\"older conjugate exponent of $p$, $0\le \theta_M\in L^p(M_T)$ be arbitrary, and $\Psi_j$ be the solution to \eqref{System_Dual}. 
	Using integration by parts, we have
	\begin{align}\label{Lem:Dual:v:Proof1}
		\begin{split}
			\intQM v_j  \theta_{\M} & = \intQM v_j \left( -\pa_t \Psi_j  - \delta_j \Delta_{\M} \Psi_j  \right) \\
			& = \intQM - \pa_t (v_j\Psi_j ) + \intQM \Psi_j  \left(  \pa_t v_j  - \delta_j \Delta_{\M} v_j \right)\\
			& = \intOM  v_{j0}\Psi_{j}(0) + \intQM \Psi_j  H_j(u,v).
		\end{split}
	\end{align} 
	With the application of H\"{o}lder's inequality and Lemma \ref{Lem:DualProblem}, it follows that 
	\begin{equation} \label{Lem:Dual:v:Proof2}
		\intOM  v_{j0}\Psi_{j}(0)  \leq C\|v_{j,0}\|_{\LOM{q}}\|\Psi_{j}(0)\|_{\LOM{p}} \leq C_T\| \theta_{\M}\|_{\LQM{p}} .
	\end{equation}
	Using the assumption  \eqref{A4}, for any $k = 1,\cdots , m_2$, we have  
	\begin{equation*}
		\sumii a_{(j+m_1)i}G_i(u,v) + \sum_{k=1}^j a_{(j+m_1)(k+m_1)}H_k(u,v) \leq K_1 \Big(|u|^{\mM} + |v|^{\mM} + 1 \Big),
	\end{equation*}
	which implies, %, recalling $a_{kk} = 1$ 
	for all $j=1,\cdots ,  m_2$,
	\begin{equation*}\label{Lem:Dual:v:Proof3}
		\begin{aligned}
			H_j(u,v)
			& \leq -\sumii \frac{a_{(j+m_1)i}}{a_{(j+m_1)(j+m_1)}}G_i(u,v) - \sum_{k=1}^{j-1} \frac{a_{(j+m_1)(k+m_1)}}{a_{(j+m_1)(j+m_1)}} H_k(u,v)\\
			&\qquad  + \frac{K_1}{a_{(j+m_1)(j+m_1)}}\Big(|u|^{\mM} + |v|^{\mM} + 1 \Big).
		\end{aligned}
	\end{equation*}
	It follows from this and \eqref{Lem:Dual:v:Proof1}--\eqref{Lem:Dual:v:Proof2}, we arrive at the inequality
	\begin{align}\label{Lem:Dual:v:Proof4}
		\begin{aligned}
			\intQM v_j  \theta_{\M}    
			%& = \intOM  v_{j0}\Psi_{j0} + \intQM \Psi_j  H_j(u,v) \\
			& \leq  C\| \theta_{\M}\|_{\LQM{p}} - \sumii \frac{a_{(j+m_1)i}}{a_{(j+m_1)(j+m_1)}} \intQM \Psi_j   G_i(u,v) \\
			&\quad - \sum_{k=1}^{j-1} \frac{a_{(j+m_1)(k+m_1)}}{a_{(j+m_1)(j+m_1)}} \intQM \Psi_j    H_k(u,v) \\
			&\quad +\frac{K_1}{a_{(j+m_1)(j+m_1)}} \intQM \Psi_j   \Big(|u|^{\mM} + |v|^{\mM} + 1 \Big) \\
			&= : C\| \theta_{\M}\|_{\LQM{p}} + \sum^3_i I_i.
		\end{aligned}
	\end{align}  
	
	Next, we will separately estimate the terms $I_i$ ($i=1,2,3$) on the right hand side of the above inequality \eqref{Lem:Dual:v:Proof4}.
	
	\medskip
	\noindent {\textbf{Estimate of $I_{1}$: }} First, we estimate the term $I_1$,
	\begin{align}\label{Lem:Dual:v:Proof}
		\begin{split}
			&	I_{1}:= - \sumii \frac{a_{(j+m_1)i}}{a_{(j+m_1)(j+m_1)}} \intQM \Psi_j   G_i(u,v) 
			=  - \sumii \frac{a_{(j+m_1)i}}{a_{(j+m_1)(j+m_1)}} \intQM \Psi_j   (d_i\nabla u_i \cdot \nu)\\
			& =  - \sumii \frac{a_{(j+m_1)i}}{a_{(j+m_1)(j+m_1)}} \Big(\intQ d_i \Delta u_i \Psi_j  + \intQM  d_i u_i\nabla\Psi_j    \cdot \nu - \intQ d_i  u_i \Delta \Psi_j  \Big)\\
			& =  - \sumii \frac{a_{(j+m_1)i}}{a_{(j+m_1)(j+m_1)}} \Big(\intQ \left(\pa_t u_i - F_i(u)\right) \Psi_j  + \intQM  d_i u_i\nabla\Psi_j    \cdot \nu \Big)\\
			&\quad +\sumii \frac{a_{(j+m_1)i}}{a_{(j+m_1)(j+m_1)}}  \intQ d_i  u_i \Delta \Psi_j  \\
			& =   \sumii \frac{a_{(j+m_1)i}}{a_{(j+m_1)(j+m_1)}}  \intO u_{i0}\Psi_{j}(0) +  \sumii \frac{a_{(j+m_1)i}}{a_{(j+m_1)(j+m_1)}} \intQ  F_i(u) \Psi_j  \\
			&\quad - \sumii \frac{a_{(j+m_1)i}}{a_{(j+m_1)(j+m_1)}}  \intQM  d_i u_i\nabla\Psi_j    \cdot \nu
			+  \sumii \frac{a_{(j+m_1)i}}{a_{(j+m_1)(j+m_1)}}  \intQ    u_i (\pa_t \Psi_j  + d_i\Delta \Psi_j  )\\
			& =: \sum^4_{k=1} I_{1k}.
		\end{split}
	\end{align}
	
	In order to estimate $I_1$, we need to estimate $I_{1k}$, $k=1,\cdots,4$.
	\begin{itemize}
		\item Estimate of $I_{11}$: By applying H\"{o}lder's inequality and using Lemma  \ref{Lem:DualProblem}, we obtain  the estimates
		\begin{align*}%\label{Lem:Dual:v:Proof5:1}
			I_{11} : &=    \sumii \frac{a_{(j+m_1)i}}{a_{(j+m_1)(j+m_1)}}  \intO u_{i0}\Psi_{j}(0) 
			\leq C\|u_{i,0}\|_{\LOM{q}}\|\Phi_{j}(0)\|_{\LOM{p}} \leq C_T\|\theta_M \|_{\LQM{p}}.  
		\end{align*} 
		\item Estimate of $I_{12}$:     
		Based on the condition \eqref{A4}, we have $			\sumii a_{(j+m_1)i} F_i(u) \leq K_1 \Big(|u|^{\rO} + 1\Big)$, and thus, we obtain
		\begin{equation*} 
			\begin{split}
				I_{12}:&= \sumii \frac{a_{(j+m_1)i}}{a_{(j+m_1)(j+m_1)}} \intQ  F_i(u) \Psi_j  \leq C \intQ \Psi_j \Big(|u|^{\rO} + 1\Big)\\ 
				&\leq C\intQ  \Psi_j   + C \sumii  \intQ  |u_i|^{\rO} \Psi_j    \\ 
				&\leq   C_T\| \theta_{\M}\|_{\LQM{p}}   +   \sumii \intO |\Psi_j |_{L^{\infty}(0,T)} \int^T_0 |u_i|^{\rO} \,dsdx \\
				&\leq  C_T\| \theta_{\M}\|_{\LQM{p}}   + C \sumii\||\Psi_j |_{L^{\infty}(0,T)}\|_{\LO{p}} \Big(\intO  \Big(\int^T_0 |u_i|^{\rO}\,ds\Big)^q dx\Big)^{\frac{1}{q}}\\
				&\leq  C_T\| \theta_{\M}\|_{\LQM{p}}   + C \sumii\| \theta_{\M}\|_{\LQM{p}} \Big(\intO  \Big(\int^T_0 |u_i|^{\rO}\,ds\Big)^q dx\Big)^{\frac{1}{q}},  
			\end{split}
		\end{equation*} 
		where $q=\frac{p}{p-1}$ and  we used the H\"{o}lder's inequality together with Lemma  \ref{Lem:DualProblem}.
		For $\rO= 1$, by applying Lemmas \ref{Lem:L_xL^1_t} and \ref{Lem:DualProblem}, we get
		\begin{align*}
			\begin{split}
				I_{12} \leq  C\| \theta_{\M}\|_{\LQM{p}}.  
			\end{split}
		\end{align*} 
		Next, we consider $1<\rO< 1+ \frac{2}{n}$,  take  $k= p-\mu_{\M}(p-1) $, it noted that $\frac{\rO-k}{1-k} \leq \frac{\mu_{\M} -k}{1-k} =q=\frac{p}{p-1}$ and $ (1-k) q =  (1-k) \frac{p}{p-1}  <1$. 
		\begin{equation*} 
			\begin{split}
				&  \Big(\intO  \Big(\int^T_0 |u_i|^{\rO}\,ds\Big)^q dx\Big)^{\frac{1}{q}} \leq  \Big(\intO  \Big(\int^T_0 |u_i|^{\rO-k} |u_i|^k\,ds\Big)^q dx\Big)^{\frac{1}{q}} \\
				&\leq  \Big(\intO  \Big(\int^T_0 |u_i| \,ds\Big)^{kq}  \(\int^T_0 |u_i|^{\frac{\rO-k}{1-k}}  \,ds\)^{q(1-k)} \,dx\Big)^{\frac{1}{q}}\\
				&\leq   \Big\|\int^T_0 |u_i| \,ds\Big\|_{\LO{\infty}}^{k}  \Big(\intO  \int^T_0 |u_i|^{\frac{\rO-k}{1-k}}  \,ds \,dx\Big)^{1-k} |\Omega|^{\frac{1-(1-k)q}{q}}\\
				&\le C_T\|u_i\|_{\LQ{q}}^{1-k} 
			\end{split}
		\end{equation*}  
		thanks to $\frac{\rO - k}{1-k} \le q$. Combining the above estimates and using H\"{o}lder's inequality, we  obtain 
		\begin{align}\label{Lem:Dual:v:Proof5:2}
			I_{12} %: &= \sumii \frac{a_{(j+m_1)i}}{a_{(j+m_1)(j+m_1)}} \intQ  F_i(u) \Psi_j  
			\leq    \Big(C_{\eps,T} +  \frac{\eps}{2} \sumii\|u_i\|_{\LQ{q}}  \Big)\| \theta_{\M}\|_{\LQM{p}}.  
		\end{align}

		\item Estimate of $I_{13}$:  Using H\"{o}lder's inequality together with the estimates from Lemma \ref{Lem:DualProblem}, we obtain
		\begin{align*}
			I_{13}: &= - \sumii\frac{a_{(j+m_1)i}}{a_{(j+m_1)(j+m_1)}}\intQM d_i u_i\nabla\Psi_j \cdot\nu \leq C\sumii\|u_i\|_{\LQM{\frac{p+\xi}{p+\xi-1}}}\|\partial_{\nu}\Psi_j \|_{\LQM{p+\xi}}\\
			&\leq C\sumii\|u_i\|_{\LQM{\frac{p+\xi}{p+\xi-1}}}\| \theta_{\M}\|_{\LQM{p}} \leq \Big( \frac{\eps}{2} \sumii\|u_i\|_{\LQM{q}} + C_{\eps,T}\Big)\| \theta_{\M}\|_{\LQM{p}},
		\end{align*} 
		where $\xi $ from Lemma \ref{Lem:DualProblem}.
		
		\item Estimate of $I_{14}$:  Applying the estimates of Lemma \ref{Lem:DualProblem} together with H\"{o}lder's inequality, we  obtain
		\begin{align*}\label{Lem:Dual:v:Proof5:4}
			I_{14} : &= \sumii \frac{a_{(j+m_1)i}}{a_{(j+m_1)(j+m_1)}} \intQ u_i (\pa_t \Psi_j  + d_i\Delta \Psi_j  ) \\
			& \leq (C_{\eps,T} +  \frac{\eps}{2} \sumii\|u_i\|_{\LQ{q}} )\| \theta_{\M}\|_{\LQM{p}}.
		\end{align*}  
	\end{itemize}
	Combing the above estimates gives  
	\begin{align*}
		%		\begin{split}
			I_{1}  \leq  \Big( \frac{\eps}{2}  \sumii\|u_i\|_{\LQM{q}} + \eps \sumii\|u_i\|_{\LQ{q}} + C_{\eps,T} \Big) \| \theta_{\M}\|_{\LQM{p}}.  
			%		\end{split}
	\end{align*} 
	
	\medskip
	\noindent{\textbf{Estimate of $I_{2}$: }} Since
	\begin{align*}
		%		\begin{split}
			I_{2}:&= - \sum_{k=1}^{j-1} \frac{a_{(j+m_1)(k+m_1)}}{a_{(j+m_1)(j+m_1)}} \intQM \Psi_j    H_k(u,v) \\
			& = - \sum_{k=1}^{j-1} \frac{a_{(j+m_1)(k+m_1)}}{a_{(j+m_1)(j+m_1)}} \intQM \Psi_j  \left(\pa_t v_k - \delta_k \Delta_{\M} v_k\right)\\
			& = \intOM v_{k0} \Psi_{j}(0) + \sum_{k=1}^{j-1} \frac{a_{(j+m_1)(k+m_1)}}{a_{(j+m_1)(j+m_1)}} \intQM v_k  \left(\pa_t \Psi_j  + \delta_k \Delta \Psi_j \right) \\
			& \leq \Big(C_T + C_T \sum_{k=1}^{j-1}  \|v_k\|_{\LQM{q}} \Big) \| \theta_{\M}\|_{\LQM{p}}, 
			%		\end{split}
	\end{align*}
	where we used the estimates of Lemma \ref{Lem:DualProblem} and H\"{o}lder's inequality.
	
	\medskip
	\noindent{\textbf{Estimate of $I_{3}$: }} Since
	\begin{align*} %\label{Lem:Dual:v:Proof7:1}
		%		\begin{split}
			& I_{3}:= \frac{K_1}{a_{(j+m_1)(j+m_1)}} \intQM \Psi_j   \Big(|u|^{\mM} + |v|^{\mM} + 1 \Big)\\ 
			&\leq C \| \theta_{\M}\|_{\LQM{p}} + C \sumii \intOM | \Psi_j |_{L^{\infty}(0,T)} \int^T_0 |u_i|^{\mM}  \,dsdx \\
			&\quad + C \sumjj \intOM | \Psi_j |_{L^{\infty}(0,T)} \int^T_0  |v_j|^{\mM} \,dsdx \\
			&\leq   C \| \theta_{\M}\|_{\LQM{p}} + C\|| \Psi_j |_{L^{\infty}(0,T)}\|_{\LOM{p}}  \sumii \Big(\intOM  \Big(\int^T_0  |u_i|^{\mM} \,ds\Big)^q dx\Big)^{\frac{1}{q}}  \\
			&\quad+  C\|| \Psi_j |_{L^{\infty}(0,T)}\|_{\LOM{p}}   \sumjj  \Big(\intOM  \Big(\int^T_0   |v_j|^{\mM} \,ds\Big)^q dx\Big)^{\frac{1}{q}},
			%		\end{split}
	\end{align*}
	where $q=\frac{p}{p-1}$. 
	For $\mM= 1$, with the application of  Lemmas \ref{Lem:L_xL^1_t} and \ref{Lem:DualProblem},  we directly get 
	\begin{align*}
		\begin{split}
			I_{3} \leq  C\| \theta_{\M}\|_{\LQM{p}}.  
		\end{split}
	\end{align*}
	Next, we consider  $1<\mM<2$,  take  $k= p-\mM(p-1) $, it is noted that $\frac{\mM-k}{1-k}  =q=\frac{p}{p-1}$ and $ (1-k) q =  (1-k) \frac{p}{p-1}  <1$ 
	\begin{equation} \label{Lem:Dual:v:Proof:1}
		\begin{split}
			\Big(\intOM  \Big(\int^T_0  u_i^{\mM} \,ds\Big)^q dx\Big)^{\frac{1}{q}}   & \leq   \Big(\intOM  \Big(\int^T_0 u_i^{\mM-k} u_i^k\,ds\Big)^q dx\Big)^{\frac{1}{q}}   \\
			&\leq   \Big(\intOM  \Big(\int^T_0 u_i \,ds\Big)^{kq}  \(\int^T_0 u_i^{\frac{\mM-k}{1-k}}  \,ds\)^{q(1-k)} \,dx\Big)^{\frac{1}{q}}\\ 
			&\leq    \Big\|\int^T_0 u_i \,ds\Big\|_{\LOM{\infty}}^{k}  \Big(\intOM  \int^T_0 u_i^{q}  \,dsdx\Big)^{1-k} |\M|^{\frac{1-(1-k)q}{q}} \\ 
			&\leq C_T  \|u_i\|_{\LQM{q}}^{1-k},  
		\end{split}
	\end{equation} 
	where we used Lemma \ref{Lem:L_xL^1_t}. 
	Similarly, we arrive at 
	\begin{equation} \label{Lem:Dual:v:Proof:2}
		\begin{split}
			\Big(\intOM  \Big(\int^T_0  v_j^{\mM} \,ds\Big)^q dx\Big)^{\frac{1}{q}}   &  \leq C_T  \|v_j\|_{\LQM{q}}^{1-k}.  
		\end{split}
	\end{equation} 
	Due to the fact that $k  <1$, combining the above estimates,  and then applying  Young's inequality, we obtain  
	\begin{equation}\label{Lem:Dual:v:Proof7}
		\begin{split}
			&I_{3}   \leq \Big( C_{\eps,T}+  \frac{\eps}{2} \sumii\|u_i\|_{\LQM{q}}  + \eps \sumjj \|v_j\|_{\LQM{q}} \Big) \| \theta_{\M}\|_{\LQM{p}}.
		\end{split}
	\end{equation} 
	After applying estimates of $I_1$, $I_2$ and $I_3$ into \eqref{Lem:Dual:v:Proof4}, we obtain the estimate  
	\begin{align*} 
		\begin{split}
			\intQM v_j  \theta_{\M}     
			& \leq     
			\Big(C_{\eps,T} +   \eps\Big( \sumii\|u_i\|_{\LQ{q}}  + \sumii \|u_i\|_{\LQM{q}} + \sumjj \|v_j\|_{\LQM{q}}\Big)  \Big) \| \theta_{\M}\|_{\LQM{p}} \\
			& \quad + C_{T}  \|v_k\|_{\LQM{q}}    \| \theta_{\M}\|_{\LQM{p}}.
		\end{split} 
	\end{align*} 
	By duality, we arrive at the desired result \eqref{Lem:Dual:v:State}. 
\end{proof}

\subsection{The proof of Theorem \ref{Theorem:1}: Global existence}

Combining Lemma \ref{Lem:L^p:energy} and 	Lemma \ref{Lem:Dual:v}, we obtain Lemma \ref{Lem:relative:v}.
\begin{lemma}\label{Lem:relative:v}
	Assume \eqref{A1}, \eqref{A2}, \eqref{A3} and \eqref{A4} with \eqref{Condition:Growth}.
	For any $q>1$, any $\eta>0$ and any $j\in \{1,\cdots, m_2\}$, there exists a constant $C_{T,\eta}$ depending on $T$ and $\eta$ such that  
	\begin{equation}\label{Lem:relative:v:State1}
		\|v_j\|_{\LQM{q}} 
		\leq  C_{T,\eta}   +  \eta \sumjj\|v_j\|_{\LQM{q}} + C \sum_{k=1}^{j-1} \|v_k\|_{\LQM{q}}.  
	\end{equation}
	In particular, when $j = 1$, the first sum on the right-hand side is neglected.
\end{lemma}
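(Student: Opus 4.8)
The plan is to obtain \eqref{Lem:relative:v:State1} by feeding the $L^p$-energy estimate \eqref{Lem:L^p:energy:State2} into the duality estimate \eqref{Lem:Dual:v:State} of Lemma \ref{Lem:Dual:v}, and then choosing the two small parameters in the correct order. There is no genuinely hard step here; the argument is essentially a bookkeeping exercise, and the only point that needs care is the order in which the constants are tuned.

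First I would fix $q>1$ and apply Lemma \ref{Lem:Dual:v} with a free small parameter $\eps>0$, which gives
\begin{align*}
\|v_j\|_{\LQM{q}} \leq C + \eps\Big(\sumii\|u_i\|_{\LQ{q}} + \sumii\|u_i\|_{\LQM{q}} + \sumjj\|v_j\|_{\LQM{q}}\Big) + C\sum_{k=1}^{j-1}\|v_k\|_{\LQM{q}}.
\end{align*}
Next I would invoke the $L^p$-energy estimate \eqref{Lem:L^p:energy:State2} with exponent $q$ and a \emph{second}, independent small parameter $\eps_1>0$: there is a constant $C_{q,\eps_1,T}>0$ with $\sumii\big(\|u_i\|_{\LQ{q}} + \|u_i\|_{\LQM{q}}\big) \leq C_{q,\eps_1,T} + \eps_1\sumjj\|v_j\|_{\LQM{q}}$. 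Substituting this bound and grouping the two contributions involving $\sumjj\|v_j\|_{\LQM{q}}$ yields
\begin{align*}
\|v_j\|_{\LQM{q}} \leq \big(C + \eps\,C_{q,\eps_1,T}\big) + \eps(1+\eps_1)\sumjj\|v_j\|_{\LQM{q}} + C\sum_{k=1}^{j-1}\|v_k\|_{\LQM{q}}.
\end{align*}

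To conclude, given $\eta>0$ I would first fix $\eps_1$ (say $\eps_1=1$), which freezes $C_{q,1,T}$ as a constant depending only on $q$ and $T$, and then choose $\eps = \eta/2$, so that the coefficient $\eps(1+\eps_1)$ of $\sumjj\|v_j\|_{\LQM{q}}$ is exactly $\eta$. Setting $C_{T,\eta} := C + (\eta/2)\,C_{q,1,T}$ then gives precisely \eqref{Lem:relative:v:State1}, with the convention of neglecting the last sum when $j=1$ inherited directly from Lemma \ref{Lem:Dual:v}.

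The main point to get right — rather than a real obstacle — is the order of the quantifiers: $\eps_1$ must be chosen \emph{before} $\eta$ is allowed to determine $\eps$, since otherwise the constant $C_{q,\eps_1,T}$ coming from \eqref{Lem:L^p:energy:State2} would be forced to blow up as one tries to shrink the absorption coefficient $\eps(1+\eps_1)$. All the $L^q$-norms appearing on the right-hand side are finite a priori, because on any interval $[0,T]$ with $T<T_{\max}$ the solution is classical and bounded, so no separate justification of finiteness is required before running the absorption.
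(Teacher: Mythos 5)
Your proof is correct and follows essentially the same route as the paper: substitute the $L^p$-energy bound \eqref{Lem:L^p:energy:State2} into the duality estimate of Lemma \ref{Lem:Dual:v} and absorb the $\sumjj\|v_j\|_{\LQM{q}}$ terms. The paper simply reuses a single parameter $\eps$ in both lemmas, obtaining the coefficient $\eps(\eps+1)$ and then setting $\eta=\eps(\eps+1)$; your concern about quantifier order is unfounded here, since $C_{T,\eta}$ is allowed to depend on $\eta$ (hence on $\eps$), so there is no harm in $C_{q,\eps,T}$ growing as $\eps\to 0$.
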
		
\begin{proof}
	From Lemma \ref{Lem:Dual:v}, we obtain
	\begin{align*}
		%		\begin{split}
			\|v_j\|_{\LQM{q}}       
			\leq C + \eps\Big( \sumii\|u_i\|_{\LQ{q}}  + \sumii \|u_i\|_{\LQM{q}} + \sumjj \|v_j\|_{\LQM{q}}\Big)    + C \sum_{k=1}^{j-1} \|v_k\|_{\LQM{q}}. 
			%		\end{split}
	\end{align*} 
	Combining Lemma \ref{Lem:L^p:energy}, we have
	\begin{align*} 
		\|v_j\|_{\LQM{q}} 
		& \leq  C_{T,\eps} +   \eps( \eps  +1)\sumjj\|v_j\|_{\LQM{q}} + C \sum_{k=1}^{j-1} \|v_k\|_{\LQM{q}}. 
	\end{align*} 
	Letting $\eta= \eps( \eps  +1)$, we arrive at the estimate \eqref{Lem:relative:v:State1}.
\end{proof}

\begin{lemma}\label{Lem:Dual:uv:v}
	Assume \eqref{A1}, \eqref{A2}, \eqref{A3} and \eqref{A4} with \eqref{Condition:Growth}.
	It holds that
	\begin{align}\label{Lem:Dual:uv:v:State} 
		\sumii \left(\|u_i\|_{\LQ{q}}+\|u_i\|_{\LQM{q}} \right) + \sumjj 	\|v_j\|_{\LQM{q}}  \leq C_T.
	\end{align}
\end{lemma}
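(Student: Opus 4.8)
The plan is to combine the duality estimate of Lemma \ref{Lem:relative:v} with the elementary absorption Lemma \ref{Lem:key} to control the surface components $v_j$, and then feed this into the $L^p$-energy estimate \eqref{Lem:L^p:energy:State2} to recover the volume components $u_i$. Throughout, everything is understood on $Q_T$ and $M_T$ for $T<T_{\max}$; since the local solution of Theorem \ref{Theorem:LocalExistence} is classical and bounded on $\overline{\Omega}\times[0,T]$ and on $M\times[0,T]$, all the norms below are a priori finite, which is precisely what legitimizes the absorption arguments, and the final constant $C_T$ depends only on $T$ (and the fixed exponent $q$), not on $T_{\max}$.

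First, fix $q>1$ and set $y_j:=\|v_j\|_{\LQM{q}}$ for $j=1,\dots,m_2$. Lemma \ref{Lem:relative:v} asserts that for every $\eta>0$ there is $C_{T,\eta}>0$ with
\[
y_j \le C_{T,\eta} + \eta\sumjj y_j + C\sum_{k=1}^{j-1} y_k, \qquad j=1,\dots,m_2,
\]
where the constant $C$ in front of the last sum is independent of $\eta$. This is exactly the hypothesis of Lemma \ref{Lem:key} (take $\eps=\eta$ and $K=C$), and hence $\sumjj \|v_j\|_{\LQM{q}} \le C_T$.

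Next, apply the energy estimate \eqref{Lem:L^p:energy:State2} with $p=q$: for any $\eps>0$,
\[
\sumii\big(\|u_i\|_{\LQ{q}}+\|u_i\|_{\LS{q}}\big) \le C_{q,\eps,T} + \eps\sumjj\|v_j\|_{\LS{q}}.
\]
Recalling that $\LS{q}$ and $\LQM{q}$ both denote $L^q(M_T)$, the last term is at most $\eps C_T$ by the previous step; taking $\eps=1$ gives $\sumii\big(\|u_i\|_{\LQ{q}}+\|u_i\|_{\LQM{q}}\big)\le C_T$. Adding this to $\sumjj\|v_j\|_{\LQM{q}}\le C_T$ yields \eqref{Lem:Dual:uv:v:State}.

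The substance of the argument has already been done in the preceding results: Lemma \ref{Lem:L_xL^1_t} (the Moser iteration producing the $L^\infty$ bounds on $\int_0^t u_i\,ds$ and $\int_0^t v_j\,ds$) and Lemma \ref{Lem:Dual:v} (the volume--surface duality argument exploiting the intermediate-sum structure \eqref{A4} with $\mu_M<2$), which together give Lemma \ref{Lem:relative:v}. Relative to those, the present lemma is essentially bookkeeping; the only delicate point is the a priori finiteness of all norms required to absorb the $\eta$- (resp. $\eps$-) terms into the left-hand side, which is secured by restricting to $(0,T)$ with $T<T_{\max}$ as noted above.
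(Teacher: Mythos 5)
Your proof is correct and follows essentially the same route as the paper's: invoke Lemma \ref{Lem:relative:v}, absorb via Lemma \ref{Lem:key} to bound $\sumjj\|v_j\|_{\LQM{q}}$, and then feed this into \eqref{Lem:L^p:energy:State2} to control the $u_i$ norms. The extra remark about a priori finiteness of the norms on $(0,T)$ with $T<T_{\max}$ is a sound clarification of why the absorption is legitimate, but does not change the argument.
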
			
\begin{proof} 
	From Lemma \ref{Lem:relative:v}, we  obtain for all $j=1,\ldots, m_2$
	\begin{align*}
		\|v_j\|_{\LQM{q}} 
		\leq  C_{T,\eta}   +  \eta \sumjj\|v_j\|_{\LQM{q}} + C \sum_{k=1}^{j-1}\|v_k\|_{\LQM{q}}. 
	\end{align*}
	By using Lemma \ref{Lem:key}, we arrive at the estimate   
	\begin{align}\label{Lem:Dual:uv:v:Proof1}
		\sumjj 	\|v_j\|_{\LQM{q}}  \leq C_T.
	\end{align}
	Using this in \eqref{Lem:L^p:energy:State2} from Lemma \ref{Lem:L^p:energy}, we have
	\begin{align}\label{Lem:Dual:uv:v:Proof3} 
		\sumii \left(\|u_i\|_{\LQ{q}}+\|u_i\|_{\LQM{q}} \right) \leq C_T.
	\end{align}
	Estimate \eqref{Lem:Dual:uv:v:State} is a direct result of \eqref{Lem:Dual:uv:v:Proof1} and \eqref{Lem:Dual:uv:v:Proof3}.
\end{proof}

\begin{proof}[The proof of Theorem \ref{Theorem:1}: Global existence]
	From Lemma \ref{Lem:Dual:uv:v}, for any $p>1$, we derive the estimate
	\begin{align*}  
		\sumii \left(\|u_i\|_{\LQ{p}}+\|u_i\|_{\LQM{p}} \right) + \sumjj 	\|v_j\|_{\LQM{p}}  \leq C_T.
	\end{align*}
	According to condition \eqref{A5},  
	it follows that 
	\begin{align*}
		\begin{cases}
			\pa_t u_{i} - d_i\Delta u_i = F_i(u) \leq K_2\left(|u|^{r}+ 1\right)\in \LQ{p},\vspace*{0.1cm}\\
			d_{i} \pa_{\nu} u_{i}  = G_i(u,v) \leq K_2\left(|u|^{r}+|v|^{r}+1\right) \in \LQM{p}, 
		\end{cases}
	\end{align*}
	for any $p> 1$. Consider the system
	\begin{align*}
		\begin{cases}
			\pa_t \tilde{u}_{i} - d_i\Delta \tilde{u}_i = F_i(u) = K_2\left(|u|^{r}+ 1\right)\in \LQ{p},\vspace*{0.1cm}\\
			d_{i} \pa_{\nu} \tilde{u}_{i}  = G_i(u,v) = K_2\left(|u|^{r}+|v|^{r}+1\right) \in \LQM{p}, 
		\end{cases}
	\end{align*} 
	for arbitrary $p> n+2$.
	According to the regularizing effect of linear parabolic equations with inhomogeneous boundary conditions (\cite[Theorem 3.2]{nittka2014inhomogeneous}), it follows that $\|\tilde{u}_i\|_{\LQ{\infty}}$ is bounded. 
	Applying comparison principle, we get $\|u_i\|_{\LQ{\infty}}$ is bounded.
	Similarly, $\|v_j\|_{\LS{\infty}}$ is bounded. 
	This shows that the solution $(u,v)$ is bounded in $\LQ{\infty}^{m_1}\times\LQM{\infty}^{m_2} $, ensuring the desired global existence.
\end{proof}

\section{Uniform-in-time bounds}\label{Sec:uniform-in-time}
In this section, we will complete the proof of Theorem \ref{Theorem:1} by showing the  the uniform-in-time bound of the solution. For this purpose, we study the system \eqref{System} on each cylinder $Q_{\tau,\tau+2} = \Omega\times(\tau,\tau+2)$, $\tau\in \mathbb N$. For the rest of this section, all constants are independent of $\tau$ unless otherwise stated.	
We define a smooth cutoff function $\varphi\in C^\infty(\R;[0,1])$ such that 
\begin{align*}
	\varphi(s)=
	\begin{cases}
		0,\quad & s\leq 0\\
		1,\quad & s\geq 1.
	\end{cases}
\end{align*}
Moreover, we assume that $|\varphi'(s)|\leq 2$ for all $s\in\R$. For any $\tau\in\N$, the shifted cutoff function is defined as $\varphi_\tau(\cdot) = \varphi(\cdot - \tau)$. 

By multiplying the system \eqref{System} by $\vat$ we have the truncated system:
\begin{equation}\label{shifted_sys}
	\begin{cases}
		\pa_t(\vat u_i) = d_i\Delta(\vat u_i) + \vat' u_i + \vat F_i(u), &(x,t)\in Q_{\tau,\tau+2}, \vspace*{0.1cm} \\
		d_i\pa_{\nu}(\vat u_i) = \vat G_i(u,v), &(x,t)\in \M_{\tau,\tau+2}, \vspace*{0.1cm}\\
		\pa_t(\vat v_j) = \delta_j\Delta_{\M}(\vat v_j) + \vat' v_j + \vat H_j(u,v), &(x,t)\in \M_{\tau,\tau+2} 
	\end{cases}
\end{equation}
with {\it zero initial data}
\begin{equation}\label{zero_initial}
	\begin{cases}
		(\vat u_i)(x,\tau) = 0, & x\in\Omega, \\
		(\vat v_j)(x,\tau) = 0, & x\in \M,
	\end{cases}
\end{equation}
where $i=1,\cdots , m_1$ and $j=1,\cdots , m_2$.

The proof of the following lemma follows a from a similar approach to Lemma \ref{Lem:L_xL^1_t}.
\begin{lemma}\label{Lem:L_xL^1_tau+}
	Assume \eqref{A1}, \eqref{A2} and \eqref{A3} with $L<0$ or $L=K=0$. It holds that 
	\begin{equation}\label{uniform-L1}
		\sup_{t\ge 0}\sumi \|u_i(t)\|_{\LO{1}} + \sup_{t\ge 0}\sumj \|v_j(t)\|_{\LM{1}} + \sup_{\tau\in \mathbb N}\sumi \|u_i\|_{L^{1}(M_{\tau,\tau+1})} \le C,
	\end{equation}
	and
	\begin{align*}
		\Big\|\int^{\tau+2}_{\tau} \vat u_i  \,ds\Big\|_{\LO{\infty}} + \Big\|\int^{\tau+2}_{\tau} \vat u_i \,ds\Big\|_{\LM{\infty}}  + \Big\|\int^{\tau+2}_{\tau} \vat v_j \,ds\Big\|_{\LM{\infty}} \leq C 
	\end{align*}
	for any $i=1,\cdots,m_1$ and $j=1,\cdots,m_2$.
\end{lemma}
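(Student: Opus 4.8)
The plan is to mimic the two-part structure of the proof of Lemma \ref{Lem:L_xL^1_t}, but on the fixed-length cylinder $Q_{\tau,\tau+2}$, carefully tracking that all constants are independent of $\tau$. For the first estimate \eqref{uniform-L1}, I would sum the equations of \eqref{System} as in \eqref{Lem:L_xL^1_t:Sys1}, integrate over $\Omega$ (resp. $M$), add the volume and surface contributions, and use \eqref{A3} with $L<0$ (or $L=K=0$). The key point is that the sign condition produces a genuine dissipation term $L\int_\Omega(|u|+1) \le 0$ plus a controllable surface coupling, so that $\frac{d}{dt}\bigl(\int_\Omega |u| + \int_M |v|\bigr) + c\int_M |u| \le C$ for some $c>0$; a Gronwall-type argument on a unit time window (using that the $L^1$ mass at any time $\tau$ is already bounded by induction/the previous window, with base case the bounded initial data) then yields the uniform-in-time $L^\infty_t L^1_x$ bound for $u$ and $v$ and the uniform $L^1(M_{\tau,\tau+1})$ bound for $u$. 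This is essentially the argument behind \eqref{0bound}/\eqref{Intro:L1} made quantitative in time under mass dissipation.

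For the second estimate, I would repeat the Moser iteration of Lemma \ref{Lem:L_xL^1_t} verbatim but applied to the truncated system \eqref{shifted_sys}--\eqref{zero_initial}. Define
\begin{align*}
	\widetilde{W}(x,t) := \int_\tau^t \Bigl(\sumii d_i\,\vat u_i(x,s)\Bigr)\,ds, \qquad
	\widetilde{Z}(x,t) := \int_\tau^t \Bigl(\sumjj \delta_j\,\vat v_j(x,s)\Bigr)\,ds,
\end{align*}
which satisfy $\widetilde W(\cdot,\tau)=\widetilde Z(\cdot,\tau)=0$ thanks to \eqref{zero_initial}. Integrating \eqref{shifted_sys} in time and using \eqref{A3}, one obtains a system of the same shape as \eqref{System:aux}, except with an extra zeroth-order forcing coming from the $\vat' u_i$ and $\vat' v_j$ terms; since $|\vat'|\le 2$ and the $L^\infty_t L^1_x$ norms of $u,v$ are now bounded uniformly in $\tau$ by \eqref{uniform-L1}, this extra term contributes only a $\tau$-independent constant. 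Then the chain of inequalities \eqref{Lem:L_xL^1_t:Proof1}--\eqref{Lem:L_xL^1_t:Proof:infty}, using the Gagliardo--Nirenberg and trace interpolation inequalities together with Lemmas \ref{Lem:interpolation:Omega}, \ref{Lem:M}, \ref{Lem:interpolation:M} (all stated with $T$-dependent, hence here length-2-dependent and therefore $\tau$-independent, constants), gives $\|\widetilde W\|_{L^\infty(Q_{\tau,\tau+2})} + \|\widetilde Z\|_{L^\infty(M_{\tau,\tau+2})} \le C$ and, by the second half of that proof, $\|\widetilde W\|_{L^\infty(M_{\tau,\tau+2})}\le C$, with $C$ independent of $\tau$. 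Since $\vat u_i, \vat v_j \ge 0$ and $\vat \equiv 1$ on $[\tau+1,\tau+2]$, this yields the claimed bounds on $\int_\tau^{\tau+2}\vat u_i\,ds$ and $\int_\tau^{\tau+2}\vat v_j\,ds$.

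The main obstacle is bookkeeping rather than a new idea: one must check that every constant appearing in the Moser iteration of Lemma \ref{Lem:L_xL^1_t} depends only on the \emph{length} of the time interval (here $2$) and on $\Omega$, $M$, $n$, the diffusion coefficients, and $L$ — but never on $\tau$ itself — and that the newly introduced $\vat' u_i$, $\vat' v_j$ terms are absorbed using \eqref{uniform-L1} without spoiling the $\tau$-uniformity. The Gronwall step in the iteration must be run on the finite interval $(\tau,\tau+2)$ with zero data at $t=\tau$, so the factor $e^{-\alpha(t-s)}$ is harmless and the resulting geometric products $\prod_k C^{1/2^k}\hat C_k^{1/2^k}$ converge exactly as before. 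Once $\tau$-independence is verified in these two places, the proof is complete.
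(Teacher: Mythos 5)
Your overall architecture matches the paper's: start from the $L^1$ bounds \eqref{uniform-L1} (which the paper simply cites from \cite[Lemma 4.4]{morgan2023global}, whereas you re-derive them with a Gronwall argument --- fine either way), introduce $\underline W = \int_\tau^t\sumii d_i\vat u_i\,ds$ and $\underline Z=\int_\tau^t\sumjj\delta_j\vat v_j\,ds$ with vanishing data at $t=\tau$, and rerun the Moser iteration of Lemma \ref{Lem:L_xL^1_t} on the length-$2$ window so that all constants depend only on the window length and the data of the problem, never on $\tau$.

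The gap is in your handling of the extra $\vat'u_i$ and $\vat'v_j$ forcings. You assert that, thanks to $|\vat'|\le 2$ and the $L^\infty_tL^1_x$ bounds from \eqref{uniform-L1}, ``this extra term contributes only a $\tau$-independent constant.'' But after the time integration the forcing in the $\underline W$-inequality is $R(x,t)=\int_\tau^t\vat'(s)\sumii u_i(x,s)\,ds$, a genuine function of $(x,t)$, and the $L^1$ bounds control only $\int_\Omega R\,dx$, not $\|R\|_{L^\infty}$. The Moser step multiplies by $\underline W^{2^k-1}$ and requires an estimate of $\int_\Omega R\,\underline W^{2^k-1}$ at every $k$; that needs either pointwise control of $R$ or that $R$ be dominated by $\underline W$, and $L^1$ information supplies neither, so the iteration cannot be ``repeated verbatim'' as you claim. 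What the paper actually does in \eqref{System:aux:tau+} is absorb the $\vat'$ term into the linear coefficient, writing $\overline\Lambda=\frac{|L|+2}{\min\{d_i\}}$ so that the system retains exactly the shape of \eqref{System:aux} and Lemma \ref{Lem:L_xL^1_t}'s argument transfers unchanged; you should present the step in that form rather than appeal to the $L^1$ bounds. Be aware, however, that even this absorption implicitly uses the pointwise inequality $\vat'(s)\le 2\vat(s)$, which a cutoff vanishing at $s=\tau$ cannot satisfy near $\tau$ (Gronwall would force $\vat\equiv 0$), so this particular step deserves closer scrutiny in both your version and the paper's; one clean route is to keep $R$ as an inhomogeneity, bound it pointwise by $2\int_\tau^{\tau+1}\sumii u_i\,ds$, control this using the $L^\infty$ bound already obtained on the preceding window $[\tau-1,\tau+1]$ (where $\varphi_{\tau-1}\equiv 1$ on $[\tau,\tau+1]$), and close the induction over $\tau$ with Lemma \ref{Lem:Uni_elementary}.
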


\begin{proof}
	The estimate \eqref{uniform-L1} was proved in \cite[Lemma 4.4]{morgan2023global}.
	By summing the equation of \eqref{System}, we have  
	\begin{align}\label{Lem:L_xL^1_tau+:Sys1}
		\begin{cases}
			\pa_t \(\sumii  (\vat u_i)\) - \Delta  \left( \sumii d_i \vat u_i\right)  = \vat' \sumii u_i  + \vat\sumii  F_i(u), \vspace*{0.2cm}\\
			\nabla (\sumii d_i \vat u_i)\cdot \nu = \vat\sumii  G_i(u,v), \vspace*{0.2cm}\\
			\pa_t \(\sumjj (\vat v_j)\) - \Delta_{\M}  \left( \sumjj \delta_j \vat v_j\right)  =  \vat'\sumjj v_j + \vat \sumjj  H_j(u, v), \vspace*{0.2cm}\\
			\sumii  (\vat u_i)(x,\tau) = 0, \quad \sumjj  (\vat v_j)(x,\tau) = 0.
		\end{cases}  
	\end{align}
	We define for $t\in (\tau,\tau+2)$
	\begin{align*}
		\underline{W}(x,t): = \int^{t}_{\tau} \Big( \sumii d_i \vat u_i(x,s)  \Big)  \,ds 
		\quad \text{and}\quad 
		\underline{Z}(x,t): = \int^{t}_{\tau} \Big( \sumjj \delta_j \vat v_j(x,s)\Big) \,ds. 
	\end{align*}
	Integrating equations \eqref{Lem:L_xL^1_tau+:Sys1} over $(\tau,t)$ and using the condition \eqref{A3}, this gives 
	\begin{align}\label{System:aux:tau+}
		\begin{cases}
			\underline{A} \pa_t \underline{W} - \Delta \underline{W}  \leq \overline{\Lambda} \underline{W} + L(t-\tau), \vspace*{0.15cm}\\
			\nabla \underline{W} \cdot \nu \leq  \overline{\Lambda} \underline{W} +  \overline{\lambda} \underline{Z} + L(t-\tau), \vspace*{0.15cm}\\
			\underline{B} \pa_t \underline{Z} - \Delta_{\M} \underline{Z} \leq \overline{\Lambda} \underline{W} + \overline{\lambda} \underline{Z}+ L(t-\tau) ,\vspace*{0.15cm}\\
			\underline{W} (x,\tau) =0, \quad \underline{Z} (x,\tau) =0,
		\end{cases}
	\end{align} 
	where $\overline{\Lambda} = \frac{|L| + 2}{\min\{d_i\}}$ and $\overline{\lambda} = \frac{|L| + 2}{\min\{\delta_j\}}$ and $\underline{A}$, $\underline{B}$ satisfy the following properties
	\begin{align*}
		0< \frac{1}{\max\{d_1,\cdots,d_{m_1}\}}:=\underline{a}   \leq \underline{A} = \frac{\sumii \vat u_i}{\sumii d_i \vat u_i} \leq \overline{a}=:\frac{1}{\min\{d_1,\cdots,d_{m_1}\}}
	\end{align*} 
	and  
	\begin{align*}
		0< \frac{1}{\max\{\delta_1,\cdots,\delta_{m_2}\}}:=\underline{b}\leq\underline{B} = \frac{  \sumjj  \vat v_j}{\sumjj \delta_j \vat v_j}\leq \overline{b}=:\frac{1}{\min\{\delta_1,\cdots,\delta_{m_2}\}}.
	\end{align*}
	We can now repeat the Moser iteration in the proof of Lemma \ref{Lem:L_xL^1_t} to get
	\begin{align*}
		\|\underline{W} \|_{\LQtaut{\infty}} + \|\underline{W}\|_{\LStaut{\infty}} + \|\underline{Z} \|_{\LStaut{\infty}} \leq C
	\end{align*} 
	where it is reminded that $C$ is independent of $\tau$. In particular, for $t\in [\tau,\tau+2] $, we have
	\begin{align*}
		\Big\|\int^{t}_{\tau} & ( \sumii d_i \vat  u_i ) \,ds \Big\|_{\LQtaut{\infty}} +  \Big\|\int^{t}_{\tau}  ( \sumii d_i \vat u_i )\,ds \Big\|_{\LStaut{\infty}}  \\
		&+ \Big\|\int^{t}_{\tau} ( \sumjj \delta_j \vat v_j )\,ds \Big\|_{\LStaut{\infty}} \leq C.
	\end{align*}
	This finish the prove of Lemma \ref{Lem:L_xL^1_tau+}.  
\end{proof}

\medskip
Using the bounds in Lemma \ref{Lem:L_xL^1_tau+}, we show the following intermediate estimates.
\begin{lemma} \label{Lem:uni:Lp} 
	Assume \eqref{A1}, \eqref{A2}, \eqref{A4}, and \eqref{A3} with $L<0$ or $L=K=0$. 
	For any $\tau\in \mathbb N$, $2\leq q$, any $k\in \{1,\cdots , m_2\}$, and any $\eps>0$, there exists a constant $C_\eps>0$ such that
	\begin{equation}\label{Lem:uni:Lp:State1}
		\begin{aligned}
			\|\vat v_k\|_{\LStaut{q}} \leq & C_\eps + C_\eps\sum_{j=1}^{k-1}\|\vat v_j\|_{\LStaut{q}} \\
			& + \eps \Big(\sumii\|u_i\|_{\LQtaut{q}} +  \sumii\|u_i\|_{\LStaut{q}}  +  \sumjj\| v_j\|_{\LStaut{q}} \Big).
		\end{aligned}
	\end{equation}
	Consequently, for any $\eps>0$, there exists $C_\eps>0$ such that 
	\begin{equation}\label{Lem:uni:Lp:State2}
		\|\vat v_k\|_{\LStaut{q}} \leq C_\eps + \eps \sumii \Big( \|u_i\|_{\LQtaut{q}} +    \|u_i\|_{\LStaut{q}} \Big) + \eps\sumjj\|v_j\|_{\LStaut{q}} 
	\end{equation}
	for all $k=1,\cdots , m_2$.
\end{lemma}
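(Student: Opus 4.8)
The plan is to transplant the duality argument of Lemma~\ref{Lem:Dual:v} to the cylinder $Q_{\tau,\tau+2}$, working with the truncated unknowns $\vat u_i,\vat v_j$ that solve \eqref{shifted_sys}--\eqref{zero_initial}, and to check that every constant produced is independent of $\tau$. I would fix $k\in\{1,\dots,m_2\}$ and $q\ge2$, set $p=q/(q-1)$, take an arbitrary $0\le\theta_{\M}\in L^p(M_{\tau,\tau+2})$, and let $\Psi_k$ solve the dual system \eqref{System_Dual} on $Q_{\tau,\tau+2}$ (with $\delta_k$) and terminal datum $\Psi_k(\cdot,\tau+2)=0$. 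By Lemma~\ref{Lem:DualProblem}, whose constants depend only on the time length $(\tau+2)-\tau=2$, one gets $\Psi_k\ge0$ and all the attendant $L^p$-, $W^{2,1}_p$- and $L^{q^{\dag}},L^{q^*}$-bounds with $\tau$-independent constants. Testing the $\vat v_k$-equation of \eqref{shifted_sys} against $\Psi_k$ and integrating by parts — the two time-endpoint terms vanishing because $\Psi_k(\cdot,\tau+2)=0$ and $(\vat v_k)(\cdot,\tau)=\varphi(0)v_k(\cdot,\tau)=0$ — would give
\[
\intMtautwo (\vat v_k)\,\theta_{\M} = \intMtautwo \Psi_k\,\vat' v_k + \intMtautwo \Psi_k\,\vat H_k(u,v).
\]

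For the last integral I would proceed exactly as in Lemma~\ref{Lem:Dual:v}: multiply the intermediate sum inequality \eqref{A4} for $H_k$ by $\vat\ge0$ and $\Psi_k\ge0$ (using $\vat\le1$) to split $\intMtautwo\Psi_k\vat H_k$ into a ``$G_i$-part'' $I_1$, a ``lower $H_l$-part'' $I_2$ with $l<k$, and a ``$\mu_{\M}$-polynomial part'' $I_3$. In $I_1$ replace $\vat G_i$ by $d_i\pa_\nu(\vat u_i)$ via the truncated boundary condition, integrate by parts twice over $\Omega$, and use $d_i\Delta(\vat u_i)=\pa_t(\vat u_i)-\vat'u_i-\vat F_i(u)$ together with $(\vat u_i)(\cdot,\tau)=0$: the initial-trace contribution present in Lemma~\ref{Lem:Dual:v} is now absent and is replaced by $\intQtautwo\vat'u_i\,\Psi_k$, while the remaining three pieces are the analogues of $I_{12},I_{13},I_{14}$ there, each carrying a harmless factor $\vat\le1$; they are estimated by the same H\"older/Young manipulations, using \eqref{A4} and the $\tau$-uniform bound $\|\int_\tau^{\tau+2}\vat u_i\,ds\|_{\LO{\infty}}\le C$ of Lemma~\ref{Lem:L_xL^1_tau+}, and produce $\eps\sumii(\|u_i\|_{\LQtaut{q}}+\|u_i\|_{\LStaut{q}})+C_\eps$ times $\|\theta_{\M}\|_{L^p}$. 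In $I_2$ integrate by parts against the truncated $v_l$-equations; the $v_{l,0}$-type term is again killed by the zero initial data, leaving $C_\eps\sum_{l<k}\|\vat v_l\|_{\LStaut{q}}$ (from $\intMtautwo(\vat v_l)(\pa_t\Psi_k+\delta_l\Delta_{\M}\Psi_k)$, controlled via $\|\Psi_k\|_{W^{2,1}_p(M_{\tau,\tau+2})}\le C\|\theta_{\M}\|_{L^p}$) plus the extra term $\intMtautwo\Psi_k\vat' v_l$. Finally $I_3$ is handled verbatim as in Lemma~\ref{Lem:Dual:v}: since $\mu_{\M}<2$, write $\vat|u_i|^{\mu_{\M}}=(\vat u_i)\,|u_i|^{\mu_{\M}-1}$ (and likewise for $v_j$), use the $\tau$-uniform bounds $\|\int_\tau^{\tau+2}\vat u_i\,ds\|_{\LO{\infty}}$, $\|\int_\tau^{\tau+2}\vat v_j\,ds\|_{\LM{\infty}}\le C$ of Lemma~\ref{Lem:L_xL^1_tau+}, H\"older, and Young, to get $\eps(\sumii\|u_i\|_{\LStaut{q}}+\sumjj\|v_j\|_{\LStaut{q}})+C_\eps$ times $\|\theta_{\M}\|_{L^p}$.

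The genuinely new ingredient — and the main obstacle — is the term $\intMtautwo\Psi_k\vat' v_k$ (together with its siblings $\intQtautwo\vat'u_i\,\Psi_k$ and $\intMtautwo\Psi_k\vat' v_l$), which has no counterpart in Lemma~\ref{Lem:Dual:v} and which must be absorbed into the $\eps$-terms, not left with a generic constant. Since $\vat'$ is supported in $[\tau,\tau+1]$, $|\vat'|\le2$, and $\Psi_k,v_k\ge0$, I would bound it by $2\intMtauone \Psi_k v_k \le 2\|\Psi_k\|_{L^{q^*}(M_{\tau,\tau+1})}\|v_k\|_{L^{(q^*)'}(M_{\tau,\tau+1})}$. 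Because $q\ge2$ and $n\ge4$ force $p\le2<\tfrac{n+1}{2}$, Lemma~\ref{Lem:DualProblem} gives $q^*<\infty$ and $\|\Psi_k\|_{L^{q^*}(M_{\tau,\tau+2})}\le C\|\theta_{\M}\|_{L^p}$, while an elementary computation shows $1<(q^*)'<q$; interpolating $\|v_k\|_{L^{(q^*)'}(M_{\tau,\tau+1})}$ between the $\tau$-uniform bound $\|v_k\|_{L^1(M_{\tau,\tau+1})}\le C$ of Lemma~\ref{Lem:L_xL^1_tau+} and $\|v_k\|_{\LStaut{q}}$ yields a strictly sublinear power of $\|v_k\|_{\LStaut{q}}$, and Young's inequality converts it into $(\eps\|v_k\|_{\LStaut{q}}+C_\eps)\|\theta_{\M}\|_{L^p}$. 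The terms $\intQtautwo\vat'u_i\,\Psi_k$ and $\intMtautwo\Psi_k\vat' v_l$ would be treated identically, now using the exponent $q^{\dag}$ in the volume case and the $\tau$-uniform $L^1$-bounds of Lemma~\ref{Lem:L_xL^1_tau+}; all of these $L^1$- and time-integral bounds being uniform in $\tau$, every constant above is independent of $\tau$. Collecting all contributions, dividing by $\|\theta_{\M}\|_{L^p(M_{\tau,\tau+2})}$, and taking the supremum over $0\le\theta_{\M}$ (which suffices since $\vat v_k\ge0$), should give \eqref{Lem:uni:Lp:State1}.

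It then remains to deduce \eqref{Lem:uni:Lp:State2} from \eqref{Lem:uni:Lp:State1} by a finite induction on $k$: for $k=1$ the sum $\sum_{j<k}$ is empty and the two inequalities coincide; assuming \eqref{Lem:uni:Lp:State2} for $v_1,\dots,v_{k-1}$, I would apply \eqref{Lem:uni:Lp:State1} for $v_k$ with parameter $\eps/2$ and substitute the inductive bounds for $\|\vat v_j\|_{\LStaut{q}}$, $j<k$, taken with a parameter small enough (depending only on $\eps$, the constant $C_{\eps/2}$, and $m_2$) that their total contribution to the $u$- and $v$-norms does not exceed $\eps/2$ of them; absorbing and renaming constants then yields \eqref{Lem:uni:Lp:State2} for $v_k$.
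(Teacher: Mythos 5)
Your proposal follows the same route as the paper: duality with $\Psi_k$ on the truncated system, the \eqref{A4}-based splitting of $\vat H_k$, integration by parts for the $G_i$- and lower-$H_l$-parts, the power-splitting for the $\mu_{\M}$-term, and---the key new point you correctly identify---interpolating $v_k$ (and likewise $u_i$, $v_l$) between the $\tau$-uniform $L^1$-bounds and $L^q$ to absorb the $\vat'$-terms, followed by the finite induction for \eqref{Lem:uni:Lp:State2}. The only cosmetic differences are your exponent for $\Psi_k$ in the $\vat'$-estimate ($q^*$ where the paper takes $p^*\leq q^{\dag}$, both available since $q\geq 2$ and $n\geq 4$ force $p\leq 2<\tfrac{n+1}{2}$) and a loose shorthand in the $\mu_{\M}$-split (one should take $(\vat u_i)^{k}u_i^{\mu_{\M}-k}$ with $k=p-\mu_{\M}(p-1)$, not $(\vat u_i)|u_i|^{\mu_{\M}-1}$), neither of which affects the argument.
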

\begin{proof}
	Similarly to Lemma \ref{Lem:Dual:v}, we just need to consider the case $\rO \leq \mu_{\M} <2$.	
	Let $0\leq  \theta_{\M} \in \LStaut{p}$ with $\| \theta_{\M}\|_{\LStaut{p}} = 1$, and $\Psi_j$ be the solution to \eqref{System_Dual} with $T = \tau+2$. By direct calculations, we write
	\begin{equation*} 
		\begin{aligned}
			\intMtautwo (\vat v_j) \theta_{\M} &= \intMtautwo (\vat v_j)(-\pa_t \Psi_j - \delta_j \Delta_{\M} \Psi_j)\\
			&= \intMtautwo \Psi_j \vat' v_j +  \intMtautwo \Psi_j\vat H_j(u,v).
		\end{aligned}
	\end{equation*}
	Using the assumption  \eqref{A4} we have for any $k = 1,\cdots , m_2$ 
	\begin{equation*}
		\sumii a_{(j+m_1)i}G_i(u,v) + \sum_{k=1}^j a_{(j+m_1)(k+m_1)}H_k(u,v) \leq K_1 \left(|u|^{\mM} + |v|^{\mM} + 1 \right),
	\end{equation*}
	which implies, 
	for all $j=1,\cdots ,  m_2$,
	\begin{equation}\label{Lem:uni:Lp:Proof2}
		\begin{aligned}
			H_j(u,v)
			& \leq -\sumii \frac{a_{(j+m_1)i}}{a_{(j+m_1)(j+m_1)}}G_i(u,v) - \sum_{k=1}^{j-1} \frac{a_{(j+m_1)(k+m_1)}}{a_{(j+m_1)(j+m_1)}} H_k(u,v)\\
			&\quad + \frac{K_1}{a_{(j+m_1)(j+m_1)}}\left(|u|^{\mM} + |v|^{\mM} + 1 \right).
		\end{aligned}
	\end{equation}
	From above estimate, we get 
	\begin{align}\label{Lem:uni:Lp:Proof3}
		\begin{aligned}
			\intMtautwo (\vat v_j) \theta_{\M}      
			& \leq  \intMtautwo \Psi_j \vat' v_j  - \sumii \frac{a_{(j+m_1)i}}{a_{(j+m_1)(j+m_1)}} \intMtautwo \Psi_j  \vat G_i(u,v) \\
			&\quad - \sum_{k=1}^{j-1} \frac{a_{(j+m_1)(k+m_1)}}{a_{(j+m_1)(j+m_1)}} \intMtautwo \Psi_j \vat   H_k(u,v) \\
			&\quad +\frac{K_1}{a_{(j+m_1)(j+m_1)}} \intMtautwo \Psi_j  \vat \left(|u|^{\mM} + |v|^{\mM} + 1 \right) \\
			&= :\sum^4_{k=1} J_k.
		\end{aligned}
	\end{align} 
	Now, we will estimate each term on the right-hand side of \eqref{Lem:uni:Lp:Proof3} separately.

	\medskip
	\textbf{Estimate $J_1$}:  By applying H\"{o}lder's inequality and   Lemma  \ref{Lem:DualProblem}, we obtain 
	\begin{align*}
		J_1 :&= \intMtautwo \Psi_j \vat' v_j  \leq C\|\Psi_j\|_{\LStaut{p^*}}\|v_j\|_{\LStaut{\frac{p^*}{p^*-1}}},  
	\end{align*}
	where $p<p^*\leq \frac{(n+2)p}{n+2-2p}$.   
	Since $1<\frac{p^*}{p^*-1}<q=\frac{p}{p-1}$, we have $1<\frac{p^*}{p^*-1}<q$. By interpolation inequality, we  obtain
	\begin{align*}
		\|v_j\|_{\LStaut{\frac{p^*}{p^*-1}}} &\leq \|v_j\|_{\LStaut{1}}^{\alpha}\|v_j\|_{\LStaut{q}}^{1-\alpha}  \leq C\|v_j\|_{\LStaut{q}}^{1-\alpha}  
	\end{align*}
	thanks to \eqref{uniform-L1}, 	with $\alpha\in (0,1)$ satisfying $\frac{p^*-1}{p^*}= \alpha+ \frac{1-\alpha}{q}$. Therefore,
	\begin{equation}\label{Lem:uni:Lp:Proof4}
		|J_1| \leq C\|\Psi_j\|_{\LStaut{p^*}} \|v_j\|_{\LStaut{q}}^{1-\alpha} \leq C_\eps + \frac{\eps}{2} \|v_j\|_{\LStaut{q}},
	\end{equation}
	where we used $0\leq  \theta_{\M} \in \LStaut{p}$ and $\| \theta_{\M}\|_{\LStaut{p}} = 1$.
	
	\medskip
	\textbf{Estimate $J_2$}: 
	\begin{align*}
		J_2:&= - \sumii \frac{a_{(j+m_1)i}}{a_{(j+m_1)(j+m_1)}} \intMtautwo \Psi_j  \vat G_i(u,v) \\
		&    
		=  - \sumii \frac{a_{(j+m_1)i}}{a_{(j+m_1)(j+m_1)}} \intMtautwo \Psi_j   d_i\nabla ( \vat u_i) \cdot \nu \\
		& =  - \sumii \frac{a_{(j+m_1)i}}{a_{(j+m_1)(j+m_1)}} \left(\intQtautwo d_i \Delta (\vat u_i )\Psi_j  + \intMtautwo  d_i \vat u_i\nabla\Psi_j    \cdot \nu \right)\\
		& \quad+  \sumii \frac{a_{(j+m_1)i}}{a_{(j+m_1)(j+m_1)}} \intQtautwo d_i \vat u_i \Delta \Psi_j  \\ 
		& =     \sumii \frac{a_{(j+m_1)i}}{a_{(j+m_1)(j+m_1)}} \intQtautwo \vat' u_i \Psi_j +   \sumii \frac{a_{(j+m_1)i}}{a_{(j+m_1)(j+m_1)}} \intQtautwo \vat F_i(u) \Psi_j  \\
		&\quad - \sumii \frac{a_{(j+m_1)i}}{a_{(j+m_1)(j+m_1)}}  \intMtautwo  d_i\vat u_i\nabla\Psi_j \cdot \nu \\
		& \quad+  \sumii \frac{a_{(j+m_1)i}}{a_{(j+m_1)(j+m_1)}}  \intQtautwo  \vat  u_i (\pa_t \Psi_j  + d_i\Delta \Psi_j  ) =: \sum^4_{k=1} J_{2k}.
	\end{align*}
	
	In order to estimate $J_2$, we need to estimate $J_{2k}, ~k=1,\cdots,4$.
	
	\underline{Estimate of $J_{21}$: } Similar to the estimate of $J_1$, we  obtain
	\begin{align*}
		J_{21} : &=   \sumii \frac{a_{(j+m_1)i}}{a_{(j+m_1)(j+m_1)}} \intQtautwo \vat' u_i \Psi_j \leq C_\eps + \frac{\eps}{3} \|u_i\|_{\LQtaut{q}}.   
	\end{align*}
	
	\underline{Estimate of $J_{22}$: }   
	From condition \eqref{A4}, we have
	\begin{align*}
		\sumii a_{(j+m_1)i} F_i(u) \leq K_1 \left(|u|^{\rO} + 1\right).
	\end{align*}
	Therefore, by applying H\"{o}lder's inequality and  Lemma  \ref{Lem:DualProblem}, we arrive at
	\begin{equation*} 
		\begin{split}
			J_{22}:&= \sumii \frac{a_{(j+m_1)i}}{a_{(j+m_1)(j+m_1)}} \intQtautwo \vat F_i(u) \Psi_j  \leq C \intQtautwo \Psi_j \left(|u|^{\rO} + 1\right)\\ 
			&\leq C\intQtautwo  \Psi_j   + C  \sumi \intQtautwo  u_i^{\rO} \Psi_j    \\ 
			&\leq   C\| \theta_{\M}\|_{\LStaut{p}}   +   C\sumi\intO |\Psi_j |_{L^{\infty}(\tau,\tau+2)} \int^{\tau+2}_{\tau} u_i^{\rO} \,dsdx \\
			&\leq  C    + C \sumi\||\Psi_j |_{L^{\infty}(\tau,\tau+2)}\|_{\LO{p}} \(\intO  \left(\int^{\tau+2}_{\tau} u_i^{\rO}\,ds\)^q dx\right)^{\frac{1}{q}},
		\end{split}
	\end{equation*} 
	where $q=\frac{p}{p-1}$. 
	For $\rO= 1$, by applying Lemmas \ref{Lem:L_xL^1_tau+} and \ref{Lem:DualProblem}, we get
	\begin{align*}
		\begin{split}
			J_{22} \leq  C.  
		\end{split}
	\end{align*}
	Next, we consider $1<\rO< 1+ \frac{2}{n}$,  take  $k= p-\mu_{\M}(p-1) $, it noted that $\frac{\rO-k}{1-k} \leq \frac{\mu_{\M}-k}{1-k}  =q=\frac{p}{p-1}$ and $ (1-k) q =  (1-k) \frac{p}{p-1}  <1$. 
	\begin{equation*} 
		\begin{split}
			&  \(\intO  \(\int^{\tau+2}_{\tau} u_i^{\rO}\,ds\)^q dx\)^{\frac{1}{q}}
			\leq  \(\intO  \(\int^{\tau+2}_{\tau} u_i^{\rO-k} u_i^k\,ds\)^q dx\)^{\frac{1}{q}} \\ 
			&\leq   \Big\|\int^{\tau+2}_{\tau} u_i \,ds\Big\|_{\LO{\infty}}^{k}  \(\intO  \int^{\tau+2}_{\tau} u_i^{\frac{\rO-k}{1-k}}  \,ds \,dx\)^{1-k} |\Omega|^{\frac{1-(1-k)q}{q}} \\
			&\leq C\|u_i\|_{\LQtaut{q}}^{1-k},
		\end{split}
	\end{equation*} 
	where we used the estimates of Lemma \ref{Lem:L_xL^1_tau+}.
	Combining the above estimates and using Young's inequality, we  obtain 
	\begin{align*}
		J_{22} 
		\leq  C + \frac{\eps}{3} \sumii\|u_i\|_{\LQtaut{q}}.  
	\end{align*}

	\underline{Estimate of $J_{23}$: } With the help of H\"{o}lder's inequality and the estimates of Lemma  \ref{Lem:DualProblem}, we arrive at
	\begin{align*}
		J_{23}: &= - \sumii\frac{a_{(j+m_1)i}}{a_{(j+m_1)(j+m_1)}}\intMtautwo d_i\vat u_i\nabla\Psi_j \cdot\nu \\
		& \leq C\sumii\|u_i\|_{\LStaut{\frac{p+\xi}{p+\xi-1}}}\|\partial_{\nu}\Psi_j \|_{\LStaut{p+\xi}}\\
		&\leq C\sumii\|u_i\|_{\LStaut{\frac{p+\xi}{p+\xi-1}}}\| \theta_{\M}\|_{\LStaut{p}}\\
		& \leq \frac{\eps}{2} \sumii\|u_i\|_{\LStaut{q}} + C,
	\end{align*} 
	where $\xi$ come from Lemma \ref{Lem:DualProblem}.
	
	\underline{Estimate of $J_{24}$: } Using H\"{o}lder's inequality together with the estimates provided in Lemma \ref{Lem:DualProblem}, we obtain
	\begin{align*}
		J_{24} : &= \sumii \frac{a_{(j+m_1)i}}{a_{(j+m_1)(j+m_1)}} \intQtautwo \vat u_i (\pa_t \Psi_j  + d_i\Delta \Psi_j  )   \leq  C + \frac{\eps}{3} \sumii\|u_i\|_{\LQtaut{q}}. 
	\end{align*} 
	By combining the estimates of $J_{21}$--$J_{24}$ with $J_2$, we  obtain
	\begin{align*}
		& J_{2} \leq \frac{\eps}{2} \sumii\|u_i\|_{\LStaut{q}} + \eps\sumii \|u_i\|_{\LQtaut{q}} + C. 
	\end{align*} 
	
	\medskip
	\textbf{Estimate $J_3$}: 
	\begin{align*}
		J_3:&=  - \sum_{k=1}^{j-1} \frac{a_{(j+m_1)(k+m_1)}}{a_{(j+m_1)(j+m_1)}} \intMtautwo \Psi_j \vat   H_k(u,v) \\  
		& = - \sum_{k=1}^{j-1} \frac{a_{(j+m_1)(k+m_1)}}{a_{(j+m_1)(j+m_1)}} \intMtautwo \Psi_j   \left(\pa_t(\vat v_k) - \delta_k\Delta_{\M}(\vat v_k) - \vat' v_k \right)\\
		& =  \sum_{k=1}^{j-1} \frac{a_{(j+m_1)(k+m_1)}}{a_{(j+m_1)(j+m_1)}} \intMtautwo \vat v_k  \left(\pa_t \Psi_j  + \delta_k \Delta \Psi_j \right)  \\
		&\quad +   \sum_{k=1}^{j-1} \frac{a_{(j+m_1)(k+m_1)}}{a_{(j+m_1)(j+m_1)}} \intMtautwo \Psi_j   \vat' v_k\\
		& \leq  C + C \sum_{k=1}^{j-1} \frac{a_{(j+m_1)(k+m_1)}}{a_{(j+m_1)(j+m_1)}} \|\vat v_k\|_{\LStaut{q}}.  
	\end{align*}
	
	\medskip
	\textbf{Estimate $J_4$}:  Similar to the prove of estimate $J_{22}$, we have
	\begin{align*}
		J_4:&=  \frac{K_1}{a_{(j+m_1)(j+m_1)}} \intMtautwo \Psi_j  \vat \left(\sumii u_i^{\mM} + \sumjj v_j^{\mM} + 1 \right) \\
		& \leq C+ \frac{\eps}{2} \sumii\|u_i\|_{\LStaut{q}}  + \frac{\eps}{2} \sumjj\|v_j\|_{\LStaut{q}}. 
	\end{align*} 
	
	\medskip
	Applying all the estimates of $J_1$, $J_2$, $J_3$ and $J_4$ in \eqref{Lem:uni:Lp:Proof4},  we obtain
	\begin{equation*}
		\begin{aligned}
			\intMtautwo (\vat v_j) \theta_{\M}&\leq C_\eps + C \sum_{j=1}^{k-1}\|\vat v_j\|_{\LStaut{q}}\\ 
			&+ \eps \Big(\sumii\|u_i\|_{\LQtaut{q}} +  \sumii\|u_i\|_{\LStaut{q}}  +  \sumjj\| v_j\|_{\LStaut{q}} \Big). 
		\end{aligned}
	\end{equation*} 
	From this we get the estimate \eqref{Lem:uni:Lp:State1} due to duality.  Finally \eqref{Lem:uni:Lp:State2} follows from \eqref{Lem:uni:Lp:State1} by induction.
\end{proof}

\subsection{The proof of Theorem \ref{Theorem:1}: Uniform-in-time bounds}

Lemma \ref{Lem:LpEnergy:tau} can be obtained by using $L^p$-energy function, The detailed proof is provided in \cite{morgan2023global}.
\begin{lemma}[\cite{morgan2023global}]\label{Lem:LpEnergy:tau} 
	Assume \eqref{A1}, \eqref{A2}, \eqref{A3} and \eqref{A4}  with \eqref{Condition:Growth}. If $L<0$ or $L=K=0$ in \eqref{A3}, then for any positive integer $p\ge 2$, and any $\eps>0$, there exists $K_{p,\eps}>0$ such that
	\begin{equation}\label{Lem:LpEnergy:tau:State1}
		\begin{aligned}
			&\sumii   \left(\intQtautwo (\vat u_i)^{p-1+\rO} + \intMtautwo (\vat u_i)^{p-1+\rM}\right)\\
			&\leq K_{p,\eps} + \eps \left(\sumii  \left(\intQtautwo u_i^{p-1+\rM} + \intMtautwo u_i^{p-1+\rM}\right)+ \sumjj  \intMtautwo v_j^{p-1+\rM} \right).
		\end{aligned}
	\end{equation} 
	As a consequence, for any $1<p<\infty$ and any $\eps>0$, there exists $K_{p,\eps}>0$ such that
	\begin{equation}\label{Lem:LpEnergy:tau:State2}
		\begin{aligned}
			&\sumii  \left(\|\varphi_\tau u_i\|_{\LQtaut{p}} + \|\varphi_\tau u_i\|_{\LStaut{p}}\right)\\
			&\leq K_{p,\eps} + \eps \left(\sumii  \left(\|u_i\|_{\LQtaut{p}} + \|u_i\|_{\LStaut{p}}\right) + \sumjj \|v_j\|_{\LStaut{p}} \right).
		\end{aligned}
	\end{equation}
\end{lemma}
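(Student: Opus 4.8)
The plan is to reproduce, now on the cylinder $Q_{\tau,\tau+2}$ and for the truncated system \eqref{shifted_sys}, the matrix--weighted $L^p$-energy argument of \cite{morgan2023global} that yields the non-shifted estimate \eqref{Lem:L^p:energy:State1}. For an integer $p\ge 2$, test the $i$-th volume equation of \eqref{shifted_sys} by $(\vat u_i)^{p-1}$ and the $j$-th surface equation by $(\vat v_j)^{p-1}$, integrate by parts (the volume test producing on $\M_{\tau,\tau+2}$ the boundary contribution $\intMtautwo \vat G_i(u,v)(\vat u_i)^{p-1}$), and form the weighted sum $\mathcal E_p(t)=\sumii\omega_i\intO(\vat u_i)^p+\sumjj\varpi_j\intOM(\vat v_j)^p$. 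Because the matrix $A$ of \eqref{A4} is lower triangular with positive diagonal, the weights can be chosen (with the hierarchy dictated by the rows of $A$, proceeding from the first downward) so that, after absorbing the cross terms into the coercive gradient contributions via Young's inequality, the reaction part of the energy balance is controlled by $A$ acting on $[F;\vec{0}_{m_2}]$ for the volume integrals and on $[G;H]$ for the surface integrals, hence by $K_1(\sumii u_i^{\rO}+1)$ and by $K_1(|u|^{\rM}+|v|^{\rM}+1)$, $K_1(|u|^{\mM}+|v|^{\mM}+1)$ respectively. The diffusion yields the coercive terms $-c\sumii\intO(\vat u_i)^{p-2}|\na(\vat u_i)|^2-c\sumjj\intOM(\vat v_j)^{p-2}|\na_{\M}(\vat v_j)|^2$, i.e.\ control of $\na\big((\vat u_i)^{p/2}\big)$ in $L^2(\Omega)$ and of $\na_{\M}\big((\vat v_j)^{p/2}\big)$ in $L^2(\M)$.

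Three features distinguish this from the non-shifted proof. (i) The zero initial data \eqref{zero_initial} gives $\mathcal E_p(\tau)=0$, so no initial-data term survives after integrating the differential inequality over $(\tau,\tau+2)$. (ii) The extra sources $\vat'u_i$, $\vat'v_j$ are harmless: since $|\vat'|\le 2$ and $\operatorname{supp}\vat'\subset[\tau,\tau+1]$, where $0\le\vat\le 1$, one has $\intQtautwo\vat'u_i(\vat u_i)^{p-1}\le 2\int_\tau^{\tau+1}\!\!\intO u_i^{p}$, and since $p<p-1+\rO$ (when $\rO>1$; the borderline $\rO=1$ is easier) Young's inequality steers this into $\eps\intQtautwo u_i^{p-1+\rO}+C_\eps$, i.e.\ into the $\eps$-controlled part of the right-hand side of \eqref{Lem:LpEnergy:tau:State1} rather than into the constant $K_{p,\eps}$ --- this is the one place where the localization must be handled correctly; the surface terms are treated the same way. (iii) Since $L<0$ or $L=K=0$, Lemma \ref{Lem:L_xL^1_tau+} supplies the $\tau$-uniform bound \eqref{uniform-L1}, which replaces the $T$-dependent $L^\infty_tL^1_x$ bounds of the non-shifted proof and makes every constant below independent of $\tau$.

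Integrating the energy inequality over $(\tau,\tau+2)$ then gives control of $\|(\vat u_i)^{p/2}\|_{L^\infty(\tau,\tau+2;L^2(\Omega))}$ together with $\|(\vat u_i)^{p/2}\|_{L^2(\tau,\tau+2;H^1(\Omega))}$, and likewise on $\M$; feeding these into the Gagliardo--Nirenberg interpolation of Lemma \ref{Lem:interpolation:Omega}, the trace interpolation of Lemma \ref{Lem:M} (for the $G_i$-boundary terms), and Lemma \ref{Lem:interpolation:M} on $\M$, and absorbing the resulting top-order terms --- which is exactly where the dimensional restrictions \eqref{Condition:Growth} on $\rO$, $\rM$, $\mM$ enter --- one arrives at \eqref{Lem:LpEnergy:tau:State1}. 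Finally \eqref{Lem:LpEnergy:tau:State2} is routine: a non-integer $p$ is reduced to the next integer by $L^p$-inclusion on the bounded sets $Q_{\tau,\tau+2}$, $\M_{\tau,\tau+2}$, and the exponents $p-1+\rO$, $p-1+\rM$ are lowered to $p$ by interpolating against the uniformly bounded $L^1$-norm from \eqref{uniform-L1} followed by Young's inequality, at the cost of an arbitrarily small multiple of $\sumjj\|v_j\|_{\LStaut{p}}$. I expect the genuine obstacle to be bookkeeping rather than a new idea: the matrix-weighted cancellation of the reaction terms is the content of \cite{morgan2023global}, and what is specific here is keeping the cutoff-derivative contributions on the $\eps$-side and all constants $\tau$-uniform.
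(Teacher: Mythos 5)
Your outline gets the adaptations right --- zero initial data kills the initial-data contribution, the $\vat'$-terms live on $(\tau,\tau+1)$ and should be interpolated onto the $\eps$-side, and the $\tau$-uniform $L^1$ bound of Lemma~\ref{Lem:L_xL^1_tau+} replaces the $T$-dependent bound --- but the energy functional you write down cannot exploit \eqref{A4}, and this is the crux of the argument, not bookkeeping.

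Taking $\mathcal E_p=\sumii\omega_i\intO(\vat u_i)^p+\sumjj\varpi_j\intOM(\vat v_j)^p$ with scalar weights, the reaction contribution in $\frac{d}{dt}\mathcal E_p$ over $\Omega$ is $p\sumii\omega_i\intO(\vat u_i)^{p-1}\vat F_i(u)$. The hypothesis \eqref{A4} controls only the fixed linear combinations $\sumii a_{ki}F_i(u)$, row by row; here the multipliers $\omega_i(\vat u_i)^{p-1}$ are different functions of $(x,t)$ for each $i$ and cannot be matched to any row of $A$. Concretely, if $a_{21}>0$, the second row gives only $F_2\le K_1(\cdots)/a_{22}-(a_{21}/a_{22})F_1$, so your energy derivative contains the sign-indefinite term $-\omega_2(a_{21}/a_{22})\intO(\vat u_2)^{p-1}\vat F_1(u)$ with nothing to absorb it: the diffusion part $-c\sumii\omega_i\intO(\vat u_i)^{p-2}|\na(\vat u_i)|^2$ has no cross-species coupling, so there are in fact no ``cross terms'' to absorb into via Young, contrary to what you assert. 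The same obstruction recurs for $G$ and $H$ on $\M$.

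What \cite{morgan2023global} actually uses is a genuine polynomial $L^p$-energy density of the form $H_p(\vat u,\vat v)=\sum_{|\beta|+|\gamma|=p}\binom{p}{\beta,\gamma}\,\theta^{Q(\beta,\gamma)}(\vat u)^\beta(\vat v)^\gamma$ with a small parameter $\theta$ and a carefully chosen exponent pattern $Q$, tested against $\partial_{u_i}H_p$ and $\partial_{v_j}H_p$. The cross-monomials are essential: after choosing $\theta$ according to the triangular structure of $A$, $\nabla_u H_p$ factors (up to controllable remainders) into a nonnegative scalar times a fixed row of $A$, so $\sumii\partial_{u_i}H_p\cdot F_i$ becomes a positive multiple of $\sumii a_{ki}F_i$ and \eqref{A4} applies, which is where the exponents $p-1+\rO$ and $p-1+\rM$ on the left of \eqref{Lem:LpEnergy:tau:State1} come from. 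Without those cross-monomials there is no way to pass from weighted $p$-th-power integrals to the row combinations that \eqref{A4} bounds, so the central absorption step of your proposal fails. Once $H_p$ is used in place of your $\mathcal E_p$, your observations (i)--(iii) are exactly the right modifications for the shifted cylinder and the interpolation lemmas close the estimate under \eqref{Condition:Growth}.
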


We need the following elementary result whose proof is straightforward.
\begin{lemma}\label{Lem:Uni_elementary}
	Let $\{y_N\}_{N\ge 0}$ be a nonnegative sequence and $\mathscr N = \{N\in \mathbb N: y_{N-1}\leq y_N \}$. If there exists $K>0$ (independent of $N$) such that
	\begin{equation*}
		y_N \leq K \quad \text{ for all } \, N\in \mathscr N,
	\end{equation*}
	then
	\begin{equation*}
		y_N \leq \max\{y_0, K\} \quad \text{ for all } \, N\in \mathbb N.
	\end{equation*}
\end{lemma}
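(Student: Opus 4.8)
The plan is to prove the desired bound for each fixed index $N$ separately, by ``walking backwards'' from $N$ to the most recent index at which the sequence increases. The case $N=0$ is immediate since $y_0\le\max\{y_0,K\}$, so I would fix $N\ge1$ and split into two cases according to whether the block of indices $\{1,\dots,N\}$ meets $\mathscr N$.

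First, suppose $\{1,\dots,N\}\cap\mathscr N=\emptyset$. By the very definition of $\mathscr N$, this means $y_{k-1}>y_k$ for every $k\in\{1,\dots,N\}$, so the finite chain $y_0>y_1>\cdots>y_N$ holds, and in particular $y_N<y_0\le\max\{y_0,K\}$. Otherwise, set $M:=\max\bigl(\{1,\dots,N\}\cap\mathscr N\bigr)$. By maximality of $M$, none of the indices $M+1,\dots,N$ lies in $\mathscr N$, so the same reasoning gives $y_M>y_{M+1}>\cdots>y_N$, whence $y_N\le y_M$; since $M\in\mathscr N$, the hypothesis yields $y_M\le K$, and therefore $y_N\le K\le\max\{y_0,K\}$. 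Letting $N$ range over $\mathbb N$ then finishes the argument.

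I do not anticipate any genuine obstacle here: the statement is elementary, and the only mild care needed is in treating the boundary case $N=0$ and the degenerate sub-case in which no index between $1$ and $N$ belongs to $\mathscr N$ (so that the backward walk never ``lands'' on an element of $\mathscr N$ and must instead be absorbed into $y_0$), both of which are handled above. No interpolation, duality, or regularity input is involved, and the monotonicity argument is purely combinatorial on the finite set $\{0,1,\dots,N\}$.
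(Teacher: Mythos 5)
Your proof is correct, and since the paper simply asserts that the proof of Lemma \ref{Lem:Uni_elementary} is ``straightforward'' without supplying one, your backward-walk argument is a perfectly valid way to fill in the details. The two cases you distinguish (whether $\{1,\dots,N\}$ meets $\mathscr N$ or not) and the strict-decrease chain from the last index in $\mathscr N$ (or from $0$) down to $N$ are exactly the content that makes the claim ``elementary,'' and the boundary case $N=0$ is handled cleanly.
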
 

\begin{lemma}\label{Lem:Lp:uv}
	Assume \eqref{A1}, \eqref{A2}, \eqref{A3} and \eqref{A4}  with \eqref{Condition:Growth}.
	It holds that
	for any $2\leq p$, there exists a constant $C_p>0$ such that
	\begin{equation}\label{Lem:Lp:uv:State}
		\sumii   \left(\|u_i\|_{\LQtau{p}} + \|u_i\|_{\LStau{p}}\right) + \sumjj \|v_j\|_{\LStau{p}} \leq C_p \quad \text{ for all } \, \tau\in \mathbb N.
	\end{equation}
\end{lemma}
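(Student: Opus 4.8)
The plan is to obtain \eqref{Lem:Lp:uv:State} by fusing the two already-established shifted estimates — the duality bound \eqref{Lem:uni:Lp:State2} of Lemma \ref{Lem:uni:Lp} and the $L^p$-energy bound \eqref{Lem:LpEnergy:tau:State2} of Lemma \ref{Lem:LpEnergy:tau}, both living on the double cylinders $Q_{\tau,\tau+2}$, $M_{\tau,\tau+2}$ — into a single recursion for
\[
y_\tau := \sumii\big(\|u_i\|_{\LQtau{p}} + \|u_i\|_{\LStau{p}}\big) + \sumjj \|v_j\|_{\LStau{p}}, \qquad \tau\in\mathbb N,
\]
and then closing that recursion uniformly in $\tau$. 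Fix $p\ge 2$ (this is exactly the range permitted by Lemma \ref{Lem:uni:Lp}, and Lemma \ref{Lem:LpEnergy:tau} permits it as well); the assertion \eqref{Lem:Lp:uv:State} is precisely $\sup_{\tau\in\mathbb N} y_\tau<\infty$.

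First I would sum \eqref{Lem:uni:Lp:State2} over $k=1,\dots,m_2$ and add \eqref{Lem:LpEnergy:tau:State2} with the same exponent $p$; relabelling the finitely many $\tau$-independent combinatorial constants into a new $\eps$, this gives for every $\eps>0$ a constant $C_\eps$, independent of $\tau$, such that
\[
\mathcal L_\tau := \sumii\big(\|\vat u_i\|_{\LQtaut{p}} + \|\vat u_i\|_{\LStaut{p}}\big) + \sumjj \|\vat v_j\|_{\LStaut{p}} \ \le\ C_\eps + \eps\,\Xi_\tau,
\]
where $\Xi_\tau := \sumii(\|u_i\|_{\LQtaut{p}} + \|u_i\|_{\LStaut{p}}) + \sumjj\|v_j\|_{\LStaut{p}}$ is the same expression without the cutoff. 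Next I would exploit the two defining properties of $\vat$: since $\vat\equiv 1$ on $(\tau+1,\tau+2)$ while $0\le\vat\le1$ and $(u,v)\ge 0$, one has $\|\vat u_i\|_{\LQtaut{p}}\ge\|u_i\|_{L^p(Q_{\tau+1,\tau+2})}$ and likewise on $M$, hence $\mathcal L_\tau\ge y_{\tau+1}$; while splitting $(\tau,\tau+2)=(\tau,\tau+1)\cup(\tau+1,\tau+2)$ together with the elementary $(a^p+b^p)^{1/p}\le a+b$ gives $\Xi_\tau\le y_\tau+y_{\tau+1}$. The three displays combine into the recursion $y_{\tau+1}\le C_\eps+\eps(y_\tau+y_{\tau+1})$, valid for all $\tau\in\mathbb N$. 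To close it, for any $\tau$ with $y_\tau\le y_{\tau+1}$ one gets $y_{\tau+1}\le C_\eps+2\eps\,y_{\tau+1}$, hence $y_{\tau+1}\le C_\eps/(1-2\eps)=:K_p$ once $\eps<\tfrac14$; Lemma \ref{Lem:Uni_elementary}, applied to the sequence $\{y_\tau\}$, then promotes this to $y_\tau\le\max\{y_0,K_p\}$ for every $\tau\in\mathbb N$. Finally $y_0<\infty$, since $y_0$ is dominated by the $L^p$-norms of $(u,v)$ over $(0,1)$, which are already controlled by \eqref{Lem:Dual:uv:v:State} (equivalently, by the global boundedness of $(u,v)$ established above); thus \eqref{Lem:Lp:uv:State} holds with $C_p:=\max\{y_0,K_p\}$.

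Since both inputs are already available, the work here is essentially organizational: matching the exponents (both lemmas accommodate any $p\ge2$), keeping track of which cylinder each norm lives on, and handling the recursion correctly. The only point worth isolating is that $y_{\tau+1}$ appears on the right of $y_{\tau+1}\le C_\eps+\eps(y_\tau+y_{\tau+1})$ with a factor $\eps$ as well, so it must be absorbed — equivalently, the recursion rewrites as $y_{\tau+1}\le\frac{C_\eps}{1-\eps}+\frac{\eps}{1-\eps}y_\tau$, a genuine contraction once $\eps<\tfrac12$ — or, as above, one passes to the increasing subsequence via Lemma \ref{Lem:Uni_elementary}; either way the closure is immediate. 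Everything else (the duality pairing against the solution $\Psi_j$ of \eqref{System_Dual}, the cutoff identities, the Hölder interpolations) is exactly as in the proofs of Lemmas \ref{Lem:uni:Lp} and \ref{Lem:LpEnergy:tau}, so I would not repeat it.
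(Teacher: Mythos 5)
Your proof is correct and follows essentially the same route as the paper's: add the shifted energy estimate \eqref{Lem:LpEnergy:tau:State2} to the (summed) shifted duality estimate \eqref{Lem:uni:Lp:State2}, use the cutoff $\vat$ to compare norms on the sub-cylinders, and close the resulting two-term recursion via Lemma \ref{Lem:Uni_elementary}. In fact your bookkeeping is the more careful one: since $\vat\equiv 1$ on $[\tau+1,\tau+2]$ (not $[\tau,\tau+1]$ as the paper inadvertently states), the cutoff norm $\mathcal L_\tau$ dominates $y_{\tau+1}$, so the recursion reads $y_{\tau+1}\le C_\eps+\eps(y_\tau+y_{\tau+1})$ exactly as you wrote — with the absorbed index this is precisely the form to which Lemma \ref{Lem:Uni_elementary} (note: not Lemma \ref{Lem:key}, which the paper mis-cites at this point) applies, giving $y_\tau\le\max\{y_0,K_p\}$ after absorbing $y_{\tau+1}$ on the set where $y_\tau\le y_{\tau+1}$.
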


\begin{proof} 
	From \eqref{Lem:LpEnergy:tau:State2} in Lemma \ref{Lem:LpEnergy:tau} and \eqref{Lem:uni:Lp:State2} in Lemma \ref{Lem:uni:Lp}, we get for any $\eps>0$, there exists a constant $C_\eps>0$ such that
	\begin{align}\label{Lem:Lp:uv:Proof1}
		\begin{split}
			&\sumii\left(\|\vat u_i\|_{\LQtaut{p}} + \|\vat u_i\|_{\LStaut{p}}\right) + \sumjj \|\vat v_j\|_{\LStaut{p}} \\
			&\leq C_\eps + \eps \left(\sumii\left(\|u_i\|_{\LQtaut{p}} + \|u_i\|_{\LStaut{p}}\right)+ \sumjj\|v_j\|_{\LStaut{p}} \right).
		\end{split}
	\end{align}
	It is noted  that $\vat\geq 0$ and $\vat|_{[\tau,\tau+1]}\equiv 1$, it follows from \eqref{Lem:Lp:uv:Proof1} that
	\begin{equation}\label{Lem:Lp:uv:Proof2}
		\begin{aligned}
			&\sumii  \left(\|u_i\|_{\LQtau{p}} + \|u_i\|_{\LStau{p}} \right) +  \sumjj \|v_j\|_{\LStau{p}}\\
			&\leq C_\eps + C \eps \left( \sumii\left(\|u_i\|_{\LQtaut{p}} + \|u_i\|_{\LStaut{p}} \right) + \sumjj\|v_j\|_{\LStaut{p}} \right).
		\end{aligned}
	\end{equation}
	For $\tau \in \mathbb N$, we define 
	\begin{equation*}
		y_{\tau}:= \sumii\left(\|u_i\|_{\LQtau{p}} + \|u_i\|_{\LStau{p}}\right) + \sumjj\|v_j\|_{\LStau{p}}.
	\end{equation*}
	Inequality \eqref{Lem:Lp:uv:Proof2} implies
	\begin{equation}\label{Lem:Lp:uv:Proof3}
		y_\tau \leq C +  C \eps(y_\tau + y_{\tau+1}).
	\end{equation}
	Define $\mathscr N  = \{\tau \in \mathbb N: y_{\tau}\leq y_{\tau+1}\}$. From Lemma \ref{Lem:key},  for any $\tau\in \mathscr N$, by choosing $\eps$ sufficiently small, we obtain from \eqref{Lem:Lp:uv:Proof3}
	\begin{equation*}
		y_\tau \leq C,
	\end{equation*}
	where $C$ is independent of $\tau$. From Lemma \ref{Lem:Uni_elementary}, we have
	\begin{equation*}
		y_\tau \leq C \quad \text{ for all } \, \tau\in \mathbb N,
	\end{equation*}
	which proves the desired estimate  \eqref{Lem:Lp:uv:State}.
	
\end{proof}

We are now ready to show the uniform-in-time bound in Theorem \ref{Theorem:1}. 
\begin{proof}[The proof of Theorem \ref{Theorem:1}: uniform-in-time bound]
	Now, by using \eqref{Lem:Lp:uv:State} in the truncated system \eqref{shifted_sys} and choosing $p$ to be large enough, using same way with the proof of global existence, we can conclude that there exists $C_\infty>0$ independent of $\tau\in \mathbb N$ such that
	\begin{equation*}
		\sumii \|u_i\|_{\LQtau{\infty}} + \sumjj \|v_j\|_{\LStau{\infty}} \leq C_\infty \quad \text{ for all } \, \tau\in \mathbb N,
	\end{equation*}
	which finishes the proof of Theorem \ref{Theorem:1}.
\end{proof}

\medskip
\subsection*{Acknowledgement.} J. Yang is supported by NSFC Grants No. 12271227 and China Scholarship Council (Contract No. 202206180025). B.Q. Tang receives funding from the FWF project
``Quasi-steady-state approximation for PDE'', number I-5213.

\newcommand{\etalchar}[1]{$^{#1}$}


\begin{thebibliography}{GIPLW21}
	
	\bibitem[AB24]{augner2024analysis}
	Bj{\"o}rn Augner and Dieter Bothe.
	\newblock Analysis of bulk-surface reaction-sorption-diffusion systems with
	{L}angmuir-type adsorption.
	\newblock {\em Journal de Math{\'e}matiques Pures et Appliqu{\'e}es},
	188:215--272, 2024.
	
	\bibitem[AET18]{alphonse2018coupled}
	Amal Alphonse, Charles~M Elliott, and Joana Terra.
	\newblock A coupled ligand-receptor bulk-surface system on a moving domain:
	well posedness, regularity, and convergence to equilibrium.
	\newblock {\em SIAM Journal on Mathematical Analysis}, 50(2):1544--1592, 2018.
	
	\bibitem[Ali79]{alikakos1979lp}
	Nicholas~D. Alikakos.
	\newblock ${L}^p$ bounds of solutions of reaction-diffusion equations.
	\newblock {\em Communications in Partial Differential Equations},
	4(8):827--868, 1979.
	
	\bibitem[Aub12]{aubin2012nonlinear}
	Thierry Aubin.
	\newblock {\em Nonlinear analysis on manifolds. {M}onge-{A}mpere equations},
	volume 252.
	\newblock Springer Science \& Business Media, 2012.
	
	\bibitem[BCRR15]{berestycki2015effect}
	Henri Berestycki, Anne-Charline Coulon, Jean-Michel Roquejoffre, and Luca
	Rossi.
	\newblock The effect of a line with nonlocal diffusion on {Fisher-KPP}
	propagation.
	\newblock {\em Mathematical Models and Methods in Applied Sciences},
	25(13):2519--2562, 2015.
	
	\bibitem[BML{\etalchar{+}}21]{borgqvist2021cell}
	Johannes Borgqvist, Adam Malik, Carl Lundholm, Anders Logg, Philip Gerlee, and
	Marija Cvijovic.
	\newblock Cell polarisation in a bulk-surface model can be driven by both
	classic and non-classic {T}uring instability.
	\newblock {\em NPJ systems biology and applications}, 7(1):13, 2021.
	
	\bibitem[BR22]{backer2022analysis}
	Jan-Phillip B{\"a}cker and Matthias R{\"o}ger.
	\newblock Analysis and asymptotic reduction of a bulk-surface
	reaction-diffusion model of {Gierer-Meinhardt} type.
	\newblock {\em Communications on Pure and Applied Analysis}, 21(4):1139--1155,
	2022.
	
	\bibitem[BRR13]{berestycki2013influence}
	Henri Berestycki, Jean-Michel Roquejoffre, and Luca Rossi.
	\newblock The influence of a line with fast diffusion on {Fisher-KPP}
	propagation.
	\newblock {\em Journal of Mathematical Biology}, 66(4-5):743--766, 2013.
	
	\bibitem[Dis20]{disser2020global}
	Karoline Disser.
	\newblock Global existence, uniqueness and stability for nonlinear dissipative
	bulk-interface interaction systems.
	\newblock {\em Journal of Differential Equations}, 269(5):4023--4044, 2020.
	
	\bibitem[FLT18]{fellner2018well}
	Klemens Fellner, Evangelos Latos, and Bao~Quoc Tang.
	\newblock Well-posedness and exponential equilibration of a volume-surface
	reaction--diffusion system with nonlinear boundary coupling.
	\newblock {\em Annales de l'Institut Henri Poincar{\'e} C, Analyse non
		lin{\'e}aire}, 35(3):643--673, 2018.
	
	\bibitem[FRT16]{fellner2016quasi}
	Klemens Fellner, Stefan Rosenberger, and Bao~Quoc Tang.
	\newblock Quasi-steady-state approximation and numerical simulation for a
	volume-surface reaction-diffusion system.
	\newblock {\em Communications in Mathematical Sciences}, 14(6):1553--1580,
	2016.
	
	\bibitem[GIPLW21]{gomez2021pattern}
	D~Gomez, S~Iyaniwura, F~Paquin-Lefebvre, and MJ~Ward.
	\newblock Pattern forming systems coupling linear bulk diffusion to dynamically
	active membranes or cells.
	\newblock {\em Philosophical Transactions of the Royal Society A},
	379(2213):20200276, 2021.
	
	\bibitem[GKS24]{gesse2024stability}
	Christian Gesse, Matthias K{\"o}hne, and J{\"u}rgen Saal.
	\newblock Stability analysis for a class of heterogeneous catalysis models.
	\newblock {\em Dynamics of Partial Differential Equations}, 21(4):351--365,
	2024.
	
	\bibitem[GPGG14]{garcia2014mathematical}
	Pilar Garc{\'\i}a-Pe{\~n}arrubia, Juan~J. G{\'a}lvez, and Jes{\'u}s G{\'a}lvez.
	\newblock Mathematical modelling and computational study of two-dimensional and
	three-dimensional dynamics of receptor--ligand interactions in signalling
	response mechanisms.
	\newblock {\em Journal of Mathematical Biology}, 69(3):553--582, 2014.
	
	\bibitem[Gri11]{grisvard2011elliptic}
	Pierre Grisvard.
	\newblock {\em Elliptic problems in nonsmooth domains}.
	\newblock SIAM, 2011.
	
	\bibitem[HR18]{hausberg2018well}
	Stephan Hausberg and Matthias R{\"o}ger.
	\newblock Well-posedness and fast-diffusion limit for a bulk--surface
	reaction--diffusion system.
	\newblock {\em Nonlinear Differential Equations and Applications NoDEA},
	25(3):17, 2018.
	
	\bibitem[MCV15]{madzvamuse2015stability}
	Anotida Madzvamuse, Andy~HW Chung, and Chandrasekhar Venkataraman.
	\newblock Stability analysis and simulations of coupled bulk-surface
	reaction--diffusion systems.
	\newblock {\em Proceedings of the Royal Society A: Mathematical, Physical and
		Engineering Sciences}, 471(2175):20140546, 2015.
	
	\bibitem[Mie12]{mielke2012thermomechanical}
	Alexander Mielke.
	\newblock Thermomechanical modeling of energy-reaction-diffusion systems,
	including bulk-interface interactions.
	\newblock {\em Discrete and Continuous Dynamical Systems-S}, 6(2):479--499,
	2012.
	
	\bibitem[Mor89]{morgan1989global}
	Jeff Morgan.
	\newblock Global existence for semilinear parabolic systems.
	\newblock {\em SIAM Journal on Mathematical Analysis}, 20(5):1128--1144, 1989.
	
	\bibitem[Mor90]{morgan1990boundedness}
	Jeff Morgan.
	\newblock Boundedness and decay results for reaction-diffusion systems.
	\newblock {\em SIAM Journal on Mathematical Analysis}, 21(5):1172--1189, 1990.
	
	\bibitem[MS19]{morgan2019martin}
	Jeff Morgan and Vandana Sharma.
	\newblock Martin's problem for volume-surface reaction-diffusion systems.
	\newblock {\em Contributions to Partial Differential Equations and
		Applications}, pages 359--370, 2019.
	
	\bibitem[MT23]{morgan2023global}
	Jeff Morgan and Bao~Quoc Tang.
	\newblock Global well-posedness for volume-surface reaction-diffusion systems.
	\newblock {\em Communications in Contemporary Mathematics}, 25(4):Paper No.
	2250002, 63, 2023.
	
	\bibitem[Nit14]{nittka2014inhomogeneous}
	Robin Nittka.
	\newblock Inhomogeneous parabolic {N}eumann problems.
	\newblock {\em Czechoslovak Mathematical Journal}, 64(3):703--742, 2014.
	
	\bibitem[NRV20]{niethammer2020bulk}
	Barbara Niethammer, Matthias R{\"o}ger, and Juan~JL Vel{\'a}zquez.
	\newblock A bulk-surface reaction-diffusion system for cell polarization.
	\newblock {\em Interfaces and Free Boundaries}, 22(1):85--117, 2020.
	
	\bibitem[Pie10]{Pierre2010Global}
	Michel Pierre.
	\newblock Global existence in reaction-diffusion systems with control of mass:
	a survey.
	\newblock {\em Milan Journal of Mathematics}, 78(2):417--455, 2010.
	
	\bibitem[PLNW19]{paquin2019pattern}
	Fr{\'e}d{\'e}ric Paquin-Lefebvre, Wayne Nagata, and Michael~J Ward.
	\newblock Pattern formation and oscillatory dynamics in a two-dimensional
	coupled bulk-surface reaction-diffusion system.
	\newblock {\em SIAM Journal on Applied Dynamical Systems}, 18(3):1334--1390,
	2019.
	
	\bibitem[PS00]{pierre2000blowup}
	Michel Pierre and Didier Schmitt.
	\newblock Blowup in reaction-diffusion systems with dissipation of mass.
	\newblock {\em SIAM Review}, 42(1):93--106, 2000.
	
	\bibitem[RR14]{ratz2014symmetry}
	Andreas R{\"a}tz and Matthias R{\"o}ger.
	\newblock Symmetry breaking in a bulk--surface reaction--diffusion model for
	signalling networks.
	\newblock {\em Nonlinearity}, 27(8):1805, 2014.
	
	\bibitem[SM16]{sharma2016global}
	Vandana Sharma and Jeff Morgan.
	\newblock Global existence of solutions to reaction-diffusion systems with mass
	transport type boundary conditions.
	\newblock {\em SIAM Journal on Mathematical Analysis}, 48(6):4202--4240, 2016.
	
	\bibitem[SM17]{ma2017uniform}
	Vandana Sharma and Jeff Morgan.
	\newblock Uniform bounds for solutions to volume-surface reaction diffusion
	systems.
	\newblock {\em Differential and Integral Equations}, 30(5/6):423--442, 2017.
	
\end{thebibliography}
\end{document}